\numberwithin{equation}{section}
\renewcommand{\det}{\operatorname{det}}
\newcommand{\RR}{\mathbb{R}}
\newcommand{\C}{\mathbb{C}}
\newcommand{\N}{\mathbb{N}}
\newcommand{\Z}{\mathbb{Z}}
\newcommand{\Q}{\mathbb{Q}}
\newcommand{\cg}{\mathcal{G}}
\newcommand{\mf}[1]{\mathfrak{#1}}
\newcommand{\tth}{t_{\mf h}}
\newcommand{\kz}{\textsc{kz}}
\newcommand{\alg}{alg}
\newcommand{\id}{\operatorname{id}}
\newcommand{\im}{\operatorname{Im}}
\newcommand{\csch}{\operatorname{csch}}
\newcommand{\R}{\mathcal{R}}
\newcommand{\Ad}{\operatorname{Ad}}
\newcommand{\ul}{\underline}
\newcommand{\cyb}{\operatorname{CYB}}
\newcommand{\bcyb}{\overline{\operatorname{CYB}}}
\newcommand{\alt}{\operatorname{Alt}}
\newcommand{\dd}{\operatorname{d}}
\newcommand{\Sh}{\operatorname{Sh}}
\newcommand{\sh}{\operatorname{sh}}
\newcommand{\h}{\hslash}
\newcommand{\ot}{\otimes}
\newcommand{\hh}{\mf h}
\newcommand{\mm}{\mf m}
\newcommand{\tsh}{\tilde\sigma_{\h}}
\newcommand{\ts}{\tilde\sigma}
\newcommand{\modu}{\operatorname{-mod}}
\newtheorem{thm}{Theorem}[section]
\newtheorem{cor}[thm]{Corollary}
\newtheorem{prop}[thm]{Proposition}
\newtheorem{lem}[thm]{Lemma}
\newtheorem{defi}[thm]{Definition}
\theoremstyle{remark} 
\newtheorem{rmk}[thm]{Remark}
\author{Adrien Brochier}
\address{IRMA (CNRS), rue Ren\'e Descartes, F-67084 Strasbourg, FRANCE}
\email{brochier@math.unistra.fr}
\title[A KD theorem for cyclotomic KZ connections]{A Kohno--Drinfeld theorem for the monodromy of cyclotomic KZ connections}
\date{\today}
\begin{document}

\begin{abstract}
We compute explicitly the monodromy representations of ``cyclotomic'' analogs of the Knizhnik--Zamolodchikov differential system. These are representations of the type B braid group $B_n^1$. We show how the representations of the braid group $B_n$ obtained using quantum groups and universal $R$-matrices may be enhanced to representations of $B_n^1$ using dynamical twists. Then, we show how these ``algebraic'' representations may be identified with the above ``analytic'' monodromy representations. 
\end{abstract}
\maketitle
\setcounter{section}{-1}
\tableofcontents
\section{Introduction}
The Knizhnik--Zamolodchikov (KZ) differential system arises in the study of correlation functions in conformal field theory. The study of its monodromy was one of the first motivations for Drinfeld's theory of quantum groups and associators. To each pair of a simple Lie algebra $\mf g$ and a finite dimensional $\mf g$-module $V$, one attaches a KZ differential system. Its monodromy leads to an analytic representation of the Artin braid group $B_n$~\cite{Artin1925} in $V^{\ot n}[[\h]]$. On the other hand, the theory of quantum groups also leads to representations of $B_n$. More precisely, every pair of an algebra $A$ and an element $R \in (A^{\ot 2})^{\times}$ which is solution of the quantum Yang-Baxter Equation (QYBE)
\begin{equation}\label{eq:QYBE}
 R^{1,2}R^{1,3}R^{2,3}=R^{2,3}R^{1,3}R^{1,2}
\end{equation}
allows one to construct representations of $B_n$ in $\tilde V^{\ot n}$ for each $\tilde V \in A\modu$\footnote{If $A$ is an (Lie) algebra over $\C$, then we denote by $A\modu$ the category of finite dimensional $A\modu$ules. If $A$ is an algebra over $\C[[\h]]$, then $A\modu$ is the category of $A$-modules which are topologically free $\C[[\h]]$-modules of finite rank.} by:
\[
 \sigma_i \longmapsto \left( (i,i+1)R^{i,i+1} \right)_{\vert \tilde V^{\ot n}}
\]
where $\sigma_1,\dots,\sigma_{n-1}$ are the Artin generators and $(i,i+1)\in S_n$ acts by permuting the tensor factors of $\tilde V^{\ot n}$. In particular, every quasi-triangular bialgebra (QTBA) leads to a solution of the QYBE.  To $\mf g$ is attached a QTBA $(U_{\h}(\mf g),\R_{\h})$ in the category of free $\C[[\h]]$-modules. It was proved by V.~Drinfeld that there is an algebra isomorphism $U(\mf g)[[\h]] \cong U_{\h}(\mf g)$, and therefore an equivalence of categories between $\mf g\modu$ and $U_{\h}(\mf g)\modu$, denoted by $V\mapsto V_{\h}$. As a $\C[[\h]]$-module, $V_{\h}\cong V[[\h]]$. Thus, for any $V \in \mf g\modu$, we obtain an algebraic representation of $B_n$ in $V^{\ot n}[[\h]]$.

The Kohno-Drinfeld theorem~\cite{Drinfeld1990a,Drinfeld1990,Kohno1987b} asserts that the ``analytic'' and the ``algebraic'' representations of $B_n$ in $V^{\ot n}[[\h]]$ are actually equivalent.

We attach a ``cyclotomic'' analog of the KZ differential system to the data of: a simple Lie algebra $\mf g$, an integer $N\geq 2$, an automorphism $\sigma$ of $\mf g$ such that $\sigma^N=\id_{\mf g}$ and $\mf g^{\sigma}$ is a Cartan subalgebra $\mf h$, a $\mf g$-module $V$ and a $\mf h$-module $W$. It leads to a monodromy representation of the type B braid group $B_n^1$ in $W \ot V^{\ot n}[[\h]]$. On the other hand, we show that the representations of $B_n$ coming from a pair $(A,R)$ can be enhanced to representations of $B_n^1$: from the data of a subalgebra $C \subset A$ and elements $E,K \in (C\ot A)^{\times}$ satisfying suitable axioms, the above representation of $B_n$ is extended by setting
\[
 \tau \longmapsto \left ( \prod_{i=2}^{n} (R^{1,i})^{-1} \prod_{i=2}^n K^{1,i} E^{0,1} \right)_{\vert \tilde W \ot \tilde V^{\ot n}}
\]
where $\tau$ is the additional Artin generator of $B_n^1$ and $\tilde W \in C\modu$. Such data can be constructed as follows (see Section~\ref{sec:state}): $(A,R)=(U_{\h}(\mf g)\rtimes \Z,\R_{\h})$, $C=U_{\h}(\mf h)$, 

\begin{align}\label{eq:alg-rep}
 K&=e^{\h t_{\mf h} /2}& E&=e^{\h(t_{\mf h}+ \frac12 t_{\mf h}^{2,2})}(1\ot \tilde \sigma_{\h})
\end{align}
 
The Drinfeld isomorphism induces an algebra isomorphism $U_{\h}(\mf h) \cong U(\mf h)[[\h]]$, and therefore an equivalence of categories between $\mf h\modu$ and $U_{\h}(\mf h)\modu$, denoted by $W \mapsto W_{\h}$. Again, $W_{\h}\cong W[[\h]]$ as a $\C[[\h]]$-module. Thus, any pair $(W,V) \in \mf h\modu\times \mf g\modu$ gives rise to a representation of $B_n^1$ in $W \ot V^{\ot n}[[\h]]$.

The main goal of this paper is to prove the following generalization of the Kohno--Drinfeld theorem:
\begin{thm}
 These two representations of $B_n^1$ in $W \ot V^{\ot n}[[\h]]$ are equivalent.
\end{thm}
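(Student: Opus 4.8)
The natural strategy is to reduce the cyclotomic statement to the classical Kohno–Drinfeld theorem by a "one extra strand" argument, using Drinfeld associators as the bridge between the analytic and algebraic sides. I would set up both representations as monodromy/holonomy of flat connections on configuration spaces: the analytic side is by construction the monodromy of the cyclotomic KZ connection on the configuration space of $n$ points in $\C^\times$ (a $K(\pi,1)$ for $B_n^1$), while the algebraic side should be realized as the monodromy of the flat connection whose holonomy along elementary paths is given by the $R$-matrix, the dynamical twists $K,E$, and a "cyclotomic associator." The engine is then the general principle that two flat connections on the same $K(\pi,1)$ with the same associated graded / infinitesimal symmetries have equivalent (gauge-equivalent) monodromy provided one exhibits a gauge transformation — equivalently, an isomorphism of the relevant braided module categories — intertwining them.

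\medskip

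\textbf{Key steps, in order.}
\begin{enumerate}
\item[(a)] \emph{Degenerate ("infinitesimal") picture.} Identify the holonomy Lie algebra of the cyclotomic KZ system: the generators $t_{ij}$, $t_{\mf h}^{0,i}$, $t_{\mf h}^{i,i}$ together with the relations satisfied by the residues of the connection form $\frac{dz_i - dz_j}{z_i - z_j}$, $\frac{dz_i}{z_i}$. This gives a graded Lie algebra $\ttor$-type object acting on $W\ot V^{\ot n}$, and the cyclotomic KZ connection is $\h$ times the canonical one-form valued in it. The classical KZ braid-group representation embeds as the "$n$ points, no puncture at $0$" sub-picture.
\item[(b)] \emph{Quantum/algebraic picture as monodromy.} Show that the assignment $\sigma_i \mapsto (i,i+1)R^{i,i+1}$, $\tau \mapsto \big(\prod (R^{1,i})^{-1}\prod K^{1,i} E^{0,1}\big)$ with the specific $K,E$ of \eqref{eq:alg-rep} is precisely the holonomy of a flat connection built from a Drinfeld associator $\Phi$ (for the $B_n$ part) together with a compatible \emph{cyclotomic associator} controlling how the extra strand is moved past the others. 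Here I would invoke the axioms on $(C,E,K)$ quoted in the introduction: the hexagon-type and pentagon-type identities they satisfy are exactly the relations guaranteeing that this assignment factors through $B_n^1$ and that it is the monodromy of a flat connection with the holonomy Lie algebra of step (a).
\item[(c)] \emph{Matching via an associator.} Apply the Kohno–Drinfeld theorem on the $B_n$-part: there is a gauge transformation (a formal power series in the $t_{ij}$, i.e. an element of the prounipotent group attached to the holonomy algebra) conjugating the analytic $B_n$-monodromy into the algebraic one, governed by a choice of associator $\Phi$. The point is to promote this to the full $B_n^1$: one needs the \emph{same} gauge transformation, suitably extended by an element involving $t_{\mf h}^{0,i}$, to also conjugate $\tau_{\mathrm{an}}$ to $\tau_{\mathrm{alg}}$.
\item[(d)] \emph{Uniqueness/rigidity.} Conclude that the ambiguity in (c) is controlled and that the matching is consistent, using that $B_n^1$ is generated by $\sigma_1,\dots,\sigma_{n-1},\tau$ with explicit relations, so it suffices to check the conjugation on these generators; the $\sigma$'s are handled by Kohno–Drinfeld and the $\tau$-check reduces to an identity between the "cyclotomic associator" extracted from the analytic side and the explicit $K,E$ on the algebraic side.
\end{enumerate}

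\medskip

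\textbf{Main obstacle.} The crux is step (c)–(d): the classical Kohno–Drinfeld theorem does not by itself say anything about the extra generator $\tau$, so I expect the hard part to be showing that the cyclotomic KZ connection near the puncture at $0$ — equivalently, its behavior as $z_1 \to 0$ — has regular singularities whose "connection matrix" is governed by exactly the data $e^{\h t_{\mf h}/2}$ and $e^{\h(t_{\mf h} + \frac12 t_{\mf h}^{2,2})}(1\ot\tilde\sigma_\h)$ after passing through the quantum group / Drinfeld isomorphism. Concretely this means: (i) analyzing the asymptotic expansion of solutions of the cyclotomic KZ equation at the boundary strata (the puncture, and collisions of points), (ii) identifying the resulting "cyclotomic associator" and monodromy-at-$0$ data, and (iii) checking these agree with the exponentials in \eqref{eq:alg-rep} under the Drinfeld isomorphism $U_\h(\mf h)\cong U(\mf h)[[\h]]$ — in particular that the twisting element $\tilde\sigma_\h$ on the quantum side corresponds to the monodromy of the puncture on the analytic side. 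This is a genuinely new compatibility not contained in the classical statement, and is where the dynamical-twist axioms and the explicit form of $E,K$ must be used in an essential way; everything else is a (careful but routine) adaptation of the Drinfeld associator machinery to the configuration space of $\C^\times$.
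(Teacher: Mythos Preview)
Your outline correctly identifies that the classical Kohno--Drinfeld theorem handles the $B_n$-part and that the entire content lies in matching the extra generator $\tau$. But your steps (b)--(d) do not supply the mechanism that actually accomplishes this matching, and the paper's proof shows that this mechanism is substantial and not ``routine associator machinery.''

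Concretely: in your step (b) you assert that the algebraic representation is the holonomy of a flat connection governed by some ``cyclotomic associator,'' but you never construct this associator on the algebraic side. The paper devotes an entire section to this: one builds an explicit element $\Psi_{\h,\sigma}\in U'_\h(\mf h)\hat\ot U_\h(\mf g)^{\hat\ot 2}$ (a \emph{shifted dynamical twist}) from the quantum Shapovalov pairing, proves it satisfies a modified ABRR linear equation, and deduces from this both the mixed pentagon equation and the octagon equation with $E_{\h,\sigma}$. This is what promotes the bare data $(R,K,E)$ to a genuine quasi-reflection algebra structure, and the ABRR equation is the bridge that lets one recover the simple closed formula for $\rho(\tau)$ from $\Psi_{\h,\sigma}$. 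None of this is visible in your proposal.

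In steps (c)--(d) you hope to ``promote'' the Drinfeld gauge transformation from $B_n$ to $B_n^1$ and then invoke unspecified ``uniqueness/rigidity.'' The paper's actual rigidity argument is this: after transporting through Drinfeld's isomorphism and twist, both $\Psi_{\kz,\sigma}$ and the image of $\Psi_{\h,\sigma}$ satisfy the mixed pentagon equation with $\Phi_{\kz}$; one then proves (using the co-Hochschild cohomology computation for the pair $(\mf g,\mf h)$ and the Etingof--Varchenko classification of solutions of the modified classical dynamical Yang--Baxter equation) that any two such solutions are related by a gauge transformation \emph{and a shift in the dynamical variable} $\mu\in\mf h^*[[\h]]$. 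The shift is an honest obstruction: gauge alone does not suffice. It is only because both $\Psi$'s satisfy the \emph{octagon} equation with the same $E_{\kz,\sigma}$ that one can force $\mu=0$. Your proposal does not anticipate the shift ambiguity, nor the role of the octagon in killing it, nor the cohomological input needed to set up the induction.

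In short: your ``main obstacle'' paragraph correctly locates the difficulty but then defers it (``this is where the dynamical-twist axioms must be used''). The paper's proof shows that resolving it requires (i) an explicit algebraic construction of the dynamical twist via Shapovalov/ABRR, and (ii) a deformation-theoretic classification of pseudo-twists modulo gauge and shift, with the octagon equation supplying the final rigidity. Without these two ingredients your outline is not yet a proof.
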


 The QTBA $U_{\h}(\mf g)$ admits a rational form $U_q(\mf g)$ which is defined over the field $\Q(q)$. Although $\R_{\h} \not \in U_q(\mf g)^{\ot 2}$, it still acts in a well defined way on finite dimensional $U_q(\mf g)$-modules, so the above representations of $B_n$ extends to this case. Similarly, the representations of $B_n^1$ which we construct extend to this setup.

Recall the main steps of the proof of the Kohno--Drinfeld theorem in~\cite{Drinfeld1990a,Drinfeld1990}: Drinfeld first introduced the notion of Quasi-Triangular Quasi-Bialgebra (QTQBA). This is a set $(A,\Delta_A,\R_A,\Phi_A)$ where $A$ is an algebra, satisfying suitable axioms. Each $\tilde V \in A\modu$ then gives rise to a representation of $B_n$ in $\tilde V^{\ot n}$. QTQBAs can be modified by an operation called ``twist'', which preserves the equivalence classes of representations of $B_n$.

Using the KZ differential system, one defines an element $\Phi_{\kz} \in U(\mf g)^{\ot 3}[[\h]]$, which allows to construct a QTQBA $(U(\mf g)[[\h]], \Delta_0,\exp(\h t/2), \Phi_{\kz})$ (here, $t\in S^2(\mf g)^{\mf g}$). For $V \in \mf g\modu$, the induced representation of $B_n$ in $V^{\ot n}[[\h]]$ coincides with the representation coming from the monodromy of the KZ connection.

The QTBA $(U_{\h}(\mf g),\Delta_{\h},\R_{\h})$ is seen as a QTQBA by setting $\Phi=1$. Using rigidity arguments, Drinfeld proves that the above two QTQBAs are twist equivalent, and therefore that the representations of $B_n$ that they give rise to are equivalent.

We now give an idea of the proof of our main result. The relation between QTQBAs and representations of $B_n$ can be enhanced as follows: in~\cite{Enriquez2008}, B.~Enriquez defines the notion of a Quasi-Reflection Algebra (QRA) $(B,\Delta_B,E_B,\Psi_B)$ over a QTQBA ($A,\Delta_A,\R_A,\Phi_A)$. These data satisfy in particular the octagon and the mixed pentagon axiom.

The theory of quasi-reflection algebras (QRA) leads to representations of $B_n^1$: if $(B,\Psi,E)$ is a QRA over a QTQBA $(A,\R)$, $\tilde W$ is a $B$-module and $\tilde V$ is an $A$-module, then one can construct a representation of $B_n^1$ in $\tilde W \ot \tilde V^{\ot n}$ which is compatible with the representation of $B_n$ on $\tilde V^{\ot n}$.

Each $(\tilde V,\tilde W) \in A\modu\times B\modu$ then gives rise to a representation of $B_n^1$ in $\tilde W \ot \tilde V^{\ot n}$. As before, one defines a ``twist'' operation for QRAs over QTQBAs, which does not change the equivalence classes of representations of $B_n^1$. To ($\mf g, t,\sigma)$, one then attaches two QRAs:
\begin{enumerate}
 \item $(U_{\h}(\mf h)[[\h]],\Delta_0,E_{\kz},\Psi_{\kz})$ over $(U(\mf g)[[\h]], \Delta_0,\exp(\h t/2), \Phi_{\kz})$ arising from the cyclotomic KZ system. The induced representations coincide with those coming from the monodromy of the KZ connection (Section~\ref{sec:equiv}).
 \item A QRA $(U_{\h}(\mf h),\Delta_{\h},E_{\h},\Psi_{\h})$ constructed as follows: The authors of~\cite{Arnaudon1998,Babelon1991,Buffenoir1999,Etingof2000} construct a dynamical twist, that is an element of $Fun(\mf h^*,U_{\h}(\mf g)^{\ot 2})$ which satisfies the dynamical cocycle equation. It is defined as the solution of a linear equation. On the other hand, the authors of~\cite{Enriquez2007a} construct an algebraic dynamical twist using a quantum analog of the Shapovalov form. It is an element of a suitable localization of $U_{\h}(\mf h)\ot U_{\h}(\mf g)^{\ot 2}$, and the dynamical cocycle equation corresponds to the mixed pentagon equation. We show that it also satisfies the algebraic analog of the ABRR linear equation. We then introduce a shifted (in $\mf h^*$) version $\Psi_{\h}$ of this dynamical twist and show that it actually belongs to the non-localized algebra $U_{\h}(\mf h)\ot U_{\h}(\mf g)^{\ot 2}$ (the shift is related to $\sigma$). We use the fact that $\Psi_{\h}$ satisfies a modified ABRR equation to construct an element $E_{\h}$ such that $(\Psi_{\h},E_{\h})$ satisfies the octagon equation. We show that the corresponding representations of $B_n^1$ are actually given by~\eqref{eq:alg-rep}.
\end{enumerate}
 Finally, using rigidity arguments, we then prove that the two QRAs are twist equivalent. This implies that the corresponding representations of $B_n^1$ are equivalent.

\medskip
\noindent
 {\bf Acknowledgments.} I am very grateful to Benjamin Enriquez for his considerable help during this project. I would also like to acknowledge helpful discussions with Damien Calaque about the classification of dynamical twists.

\subsection{Notation}\label{sec:notation}
Let $\mf g$ be a simple Lie algebra, $\mf h$ be a Cartan subalgebra and $\mf g= \mf n^+ \oplus \mf h \oplus \mf n^-$ a triangular decomposition. Let $G$ be the simply connected complex Lie group with Lie algebra $\mf g$ and $H$ be the subgroup of $G$ with Lie algebra $\mf h$. Pick an element $t \in S^2(\mf g)^{\mf g}$ and let $t_{\mf h} \in S^2(\mf h)$ be its projection onto $S^2(\mf h)$ with respect to the above decomposition. Let $N \geq 2$ be an integer, and choose an automorphism $\sigma$ of $\mf g$ which satisfies:
\begin{enumerate}[(i)]
 \item $\sigma^N=\id_{\mf g}$
 \item $\sigma=\Ad(X)$ for some $X \in H$
 \item $\mf g^{\sigma}=\mf h$
\end{enumerate}
Let $R \subset \mf h^*$ be the set of roots of $\mf g$, $\Pi=\{\alpha_1,\dots,\alpha_r \}$ a choice of simple roots and let $(e_i^{\pm},h_i)_{i=1,\dots , r}$ be the corresponding Chevalley generators. These conditions imply that $\sigma$ is determined by a set of $N$th roots of unity $(\zeta_i)_{i=1,\dots,r}$. Then, $\sigma$ acts by: $\sigma(e_i^{\pm})=\zeta_i^{\pm 1} e_i^{\pm}$, $\sigma(h_i)= h_i$, $i=1,\dots,r$.

Recall~\cite{Brieskorn1971} that the \emph{braid group of Coxeter type B} $B_n^1$ admits the following presentation:
\begin{align*}
B_n^1=\langle \tau, \sigma_1,\dots, \sigma_{n-1}\ |\ & \tau \sigma_1 \tau \sigma_1 = \sigma_1 \tau \sigma_1 \tau \\ 
& \tau \sigma_i=\sigma_i\tau \text{ if } i > 1 \\
 & \sigma_i \sigma_{i+1} \sigma_i = \sigma_{i+1} \sigma_{i} \sigma_{i+1}\ \forall  i \in \{1,\dots,n-2 \}\\
 & \sigma_i \sigma_j = \sigma_j \sigma_i \text{ if } |i-j| \geq 2 \rangle
\end{align*}
The braid group $B_n$ is identified with the subgroup of $B_n^1$ generated by $\sigma_1,\dots,\sigma_{n-1}$.

\section{Statement of the results}\label{sec:state}
In this section, We give a set of axioms which leads to a general algebraic construction of representations of $B_n^1$. We show that such representations can be constructed from the quantized enveloping algebra of any simple Lie algebra. We recall from~\cite{Enriquez2008} the construction of the cyclotomic KZ connections and of the corresponding analytic representations of $B_n^1$. Finally, we state our main result.
\subsection{Algebraic representations of $B_n^1$}

\begin{thm}\label{thm:alg-reps}
Let $A$ be an associative algebra, $C$ a commutative subalgebra of $A$ and elements $E,K \in (C^{\ot 2})^{\times}$, $R \in (A^{\ot 2})^{\times}$ such that:
\begin{enumerate}
 \item $R$ is a solution of the Quantum Yang-Baxter equation~\eqref{eq:QYBE}
 \item $E^{1,2}E^{1,3}(K^{2,3})^2$ commutes with $(2,3)R^{2,3}$
 \item $K^{1,2}K^{1,3}$ commutes with $R^{2,3}$
 \item $K^{1,2}=K^{2,1}$
\end{enumerate}

 There exists a unique group morphism $\rho:B_n^1 \rightarrow (C \ot A^{\ot n}\rtimes S_n)^{\times}$ given by:
 \begin{align*}
  \tau&\longmapsto \prod_{i=2}^n (R^{1,i})^{-1} \prod_{i=2}^n K^{1,i} E^{0,1} \\
  \sigma_i &\longmapsto  (i,i+1)R^{i,i+1}
 \end{align*}

\end{thm}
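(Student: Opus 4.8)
The plan is to verify that the proposed assignment $\rho$ respects each of the four defining relations of $B_n^1$ listed in the presentation; uniqueness is then immediate since the Artin generators $\sigma_i$ and $\tau$ generate the group. I would organize the computation so that the relations not involving $\tau$ are dispatched first, and then concentrate on the two relations that do.

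\emph{Relations among the $\sigma_i$.} The braid relations $\sigma_i\sigma_{i+1}\sigma_i=\sigma_{i+1}\sigma_i\sigma_{i+1}$ and the commutation $\sigma_i\sigma_j=\sigma_j\sigma_i$ for $|i-j|\geq 2$ are the standard fact that a solution $R$ of the QYBE~\eqref{eq:QYBE} yields a representation of $B_n$ via $\sigma_i\mapsto (i,i+1)R^{i,i+1}$. I would recall this briefly: the far-commutation is clear because the relevant factors act on disjoint tensor legs, and the braid relation on legs $i,i+1,i+2$ reduces, after moving all permutations to one side using $(i,i+1)X^{i,i+1}=X^{i+1,i}(i,i+1)$, exactly to~\eqref{eq:QYBE}. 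Here hypothesis (1) is used and nothing else; the factor $C$ and the generators $E,K$ play no role since $\tau$ does not appear.

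\emph{The relation $\tau\sigma_i=\sigma_i\tau$ for $i>1$.} Write $T=\prod_{i=2}^n(R^{1,i})^{-1}\prod_{i=2}^n K^{1,i}\,E^{0,1}$ for the image of $\tau$. For $i>1$, $\sigma_i$ acts on legs $i,i+1$, both of which are $\geq 2$, hence distinct from leg $0$; so $\sigma_i$ commutes with $E^{0,1}$ trivially. It remains to check that $(i,i+1)R^{i,i+1}$ commutes with $\prod_{j=2}^n(R^{1,j})^{-1}\prod_{j=2}^n K^{1,j}$. All factors with $j\notin\{i,i+1\}$ commute with $(i,i+1)R^{i,i+1}$ for disjointness reasons, so the content is that $(R^{1,i})^{-1}(R^{1,i+1})^{-1}K^{1,i}K^{1,i+1}$ commutes with $(i,i+1)R^{i,i+1}$; relabeling legs $(1,i,i+1)\to(1,2,3)$ this is the assertion that $(R^{1,2})^{-1}(R^{1,3})^{-1}K^{1,2}K^{1,3}$ commutes with $(2,3)R^{2,3}$. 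Since $R^{1,2}R^{1,3}$ commutes with $R^{2,3}$ by the QYBE (written as $R^{1,2}R^{1,3}(R^{2,3})=R^{2,3}R^{1,2}R^{1,3}$, hence also with $(2,3)R^{2,3}$), this follows from hypothesis (3), that $K^{1,2}K^{1,3}$ commutes with $R^{2,3}$ (and visibly with the permutation $(2,3)$, as $K\in(C^{\ot 2})^\times$ is symmetric there by (4)), so $K^{1,2}K^{1,3}$ commutes with $(2,3)R^{2,3}$ as well.

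\emph{The relation $\tau\sigma_1\tau\sigma_1=\sigma_1\tau\sigma_1\tau$.} This is the main obstacle and the only place the octagon-type hypothesis (2) enters. Only legs $0,1,2$ are involved, so I would work inside $C\ot A^{\ot 2}\rtimes S_2$ and set $S=(1,2)R^{1,2}$ (the image of $\sigma_1$) and $T=(R^{1,2})^{-1}K^{1,2}E^{0,1}$ (the image of $\tau$ in the $n=2$ case; the extra factors for general $n$ act on legs $\geq 3$ and commute through, so it suffices to treat $n=2$). The relation to prove is $TSTS=STST$, equivalently $S^{-1}TST=TSTS^{-1}\cdot(\text{nothing})$ — more usefully, $ (TS)^2=(ST)^2$. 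I would expand $TS = (R^{1,2})^{-1}K^{1,2}E^{0,1}(1,2)R^{1,2}$. Conjugating $E^{0,1}$ by the permutation $(1,2)$ turns it into $E^{0,2}$, so $TS = (R^{1,2})^{-1}K^{1,2}(1,2)\,E^{0,2}R^{1,2}$; since $E^{0,2}$ and $R^{1,2}$ share only leg $2$, I would then move $R^{1,2}$ to commute appropriately. The cleanest route is to show that both $TSTS$ and $STST$ equal a common normal form, or equivalently to rewrite the identity as the statement that the element $E^{0,1}E^{0,2}(K^{1,2})^2$ (which is what results after collecting the $E$'s and $K$'s through the $R$'s and permutations) commutes with $(1,2)R^{1,2}$ — and that is precisely hypothesis (2) after relabeling legs $(0,1,2)\to(1,2,3)$ (so $E^{0,1}E^{0,2}(K^{1,2})^2$ becomes $E^{1,2}E^{1,3}(K^{2,3})^2$ and $(1,2)R^{1,2}$ becomes $(2,3)R^{2,3}$). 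The delicate part of the bookkeeping is tracking the inverse $R$-factors and the permutation as one slides $E^{0,1}$ past $S$ twice: one uses repeatedly that $E^{0,i}$ and $K^{1,j}$ lie in $C^{\ot 2}$ on legs involving $0$ or are symmetric, that $R^{1,2}$ and $(R^{1,2})^{-1}$ cancel where legs align, and the QYBE to commute $K^{1,2}K^{1,3}$-type products past $R^{2,3}$. I expect this to reduce, after a finite and essentially forced sequence of moves, exactly to hypothesis (2), with hypotheses (3)–(4) used as auxiliary commutations along the way; no further input should be needed. Finally, uniqueness of $\rho$ is clear because $\{\tau,\sigma_1,\dots,\sigma_{n-1}\}$ generate $B_n^1$, so the images of the generators determine the morphism.
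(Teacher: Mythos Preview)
Your overall strategy---verify the defining relations of $B_n^1$ one by one, with the braid relations handled by QYBE, the commutation $\tau\sigma_i=\sigma_i\tau$ ($i>1$) by QYBE and axiom~(c), and the four--term relation by axiom~(b)---is exactly the paper's approach. However, your treatment of the relation $\tau\sigma_1\tau\sigma_1=\sigma_1\tau\sigma_1\tau$ contains a real gap.

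You claim that ``the extra factors for general $n$ act on legs $\geq 3$ and commute through, so it suffices to treat $n=2$''. This is not correct: in $T=\prod_{i=2}^n (R^{1,i})^{-1}\prod_{i=2}^n K^{1,i}\,E^{0,1}$, every factor $(R^{1,i})^{-1}$ and $K^{1,i}$ with $i\geq 3$ involves leg~$1$, and $\sigma_1=(1,2)R^{1,2}$ moves leg~$1$. Thus these factors do \emph{not} commute with $\sigma_1$; conjugating by $(1,2)$ turns $(R^{1,i})^{-1}$ into $(R^{2,i})^{-1}$ and $K^{1,i}$ into $K^{2,i}$. The paper does the computation for general $n$: after the obvious cancellations and permutations one is left comparing expressions like
\[
\prod_{i=3}^n (R^{2,i})^{-1}\prod_{i=3}^n (R^{1,i})^{-1}\Bigl(\prod K\Bigr)E^{0,1}E^{0,2}
\quad\text{and}\quad
(R^{1,2})^{-1}\prod_{i=3}^n (R^{1,i})^{-1}\prod_{i=3}^n (R^{2,i})^{-1}R^{1,2}\Bigl(\prod K\Bigr)E^{0,1}E^{0,2},
\]
and the equality of the $R$--parts requires the QYBE in the form $(R^{1,2})^{-1}(R^{2,i}R^{1,i})^{-1}R^{1,2}=(R^{1,i}R^{2,i})^{-1}$ for each $i\geq 3$, together with axiom~(c) (and~(d)) to slide the $K$--products past the various $(R^{2,i})^{-1}$ and $R^{1,2}$. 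None of this is automatic from the $n=2$ case. A smaller slip in the same spirit: in your paragraph on $\tau\sigma_i=\sigma_i\tau$ you write the QYBE as ``$R^{1,2}R^{1,3}R^{2,3}=R^{2,3}R^{1,2}R^{1,3}$'', which is false; the correct identity $R^{1,2}R^{1,3}R^{2,3}=R^{2,3}R^{1,3}R^{1,2}$ only gives that $(2,3)R^{2,3}$ commutes with $(R^{1,2})^{-1}(R^{1,3})^{-1}$ after one also uses the permutation $(2,3)$ to swap the order back. The fix is to carry out the full computation for arbitrary $n$ as the paper does, keeping track of the leg--$1$ factors explicitly.
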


We also have the following particular case:
\begin{lem}\label{lem:alg}
 If $(A,R)$ is a QTQBA, $C$ is a commutative sub-bialgebra of $A$ and $E,K \in (C^{\ot 2})^{\times}$ satisfy
 \begin{itemize}
  \item $(\id_A \ot \Delta_A)(E)=E^{1,2}E^{1,3}(K^{2,3})^2$
  \item $( \id_A \ot \Delta_A )(K)=K^{1,2}K^{1,3}$
  \item $K^{1,2}=K^{2,1}$
 \end{itemize}
then $(A,C,R,K,E)$ verifies the assumptions of Theorem~\ref{thm:alg-reps}.
\end{lem}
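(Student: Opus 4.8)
The plan is to verify in turn the four numbered conditions of Theorem~\ref{thm:alg-reps}. Condition~(4) is one of the hypotheses of the lemma, so nothing is to be done. Condition~(1) is the classical fact that the $R$-matrix of a quasi-triangular bialgebra solves the QYBE~\eqref{eq:QYBE}: it follows formally from the hexagon identities $(\Delta_A\ot\id_A)(R)=R^{1,3}R^{2,3}$, $(\id_A\ot\Delta_A)(R)=R^{1,3}R^{1,2}$ and quasi-cocommutativity $R\,\Delta_A(a)=\Delta_A^{\mathrm{op}}(a)\,R$. (This is the one place where triviality of the associator matters; in the conventions of the paper, ``QTQBA'' is understood here with $\Phi=1$, which is exactly the setting in which condition~(1) can hold.)

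For conditions~(2) and~(3) the mechanism is the same: rewrite the element in question via the relevant coproduct hypothesis, and then move $R^{2,3}$ across it using quasi-cocommutativity, which affects only the last two tensor legs and replaces $\Delta_A$ by $\Delta_A^{\mathrm{op}}$ there. For~(3) I would use $K^{1,2}K^{1,3}=(\id_A\ot\Delta_A)(K)$ — noting that $\Delta_A$ restricts to $\Delta_C$ on the sub-bialgebra $C$, so this element lies in $C^{\ot 3}$ — write $K=\sum_j k_1^j\ot k_2^j$, and compute
\[
R^{2,3}(\id_A\ot\Delta_A)(K)=\sum_j k_1^j\ot R\,\Delta_A(k_2^j)=\sum_j k_1^j\ot \Delta_A^{\mathrm{op}}(k_2^j)\,R^{2,3}=(\id_A\ot\Delta_A^{\mathrm{op}})(K)\,R^{2,3}.
\]
It then remains to observe that $(\id_A\ot\Delta_A^{\mathrm{op}})(K)=(\id_A\ot\Delta_A)(K)$: the left-hand side is $K^{1,2}K^{1,3}$ with its second and third legs exchanged, that is $K^{1,3}K^{1,2}$, and this equals $K^{1,2}K^{1,3}$ because $K^{1,2}$ and $K^{1,3}$ have all their legs in the commutative algebra $C$ and hence commute. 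This is the only point at which commutativity of $C$ is used.

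Condition~(2) is treated the same way. Setting $w:=E^{1,2}E^{1,3}(K^{2,3})^2$, the hypothesis gives $w=(\id_A\ot\Delta_A)(E)$, so conjugating $w$ by $(2,3)R^{2,3}$ proceeds in two steps: conjugation by $R^{2,3}$ turns $(\id_A\ot\Delta_A)(E)$ into $(\id_A\ot\Delta_A^{\mathrm{op}})(E)$ by quasi-cocommutativity in the last two legs (the first leg being a spectator), and conjugation by the transposition $(2,3)$ exchanges those two legs, turning $\Delta_A^{\mathrm{op}}$ back into $\Delta_A$ and returning $w$; thus $w$ commutes with $(2,3)R^{2,3}$. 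Here the coproduct identity for $E$ is essential precisely because condition~(2) is a statement about that specific element, while the symmetry hypothesis $K^{1,2}=K^{2,1}$ plays no further role than being condition~(4) itself. Overall this is a bookkeeping exercise and I do not expect a genuine obstacle; the only things requiring care are the leg-numbering conventions, consistently distinguishing $\Delta_A$ from $\Delta_A^{\mathrm{op}}$, and checking that quasi-cocommutativity is applied to the correct pair of legs.
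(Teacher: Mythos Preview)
The paper states this lemma without proof, and your argument is precisely the intended one: the coproduct hypotheses let you write the elements in~(b) and~(c) as $(\id_A\ot\Delta_A)$ applied to $E$ and $K$, quasi-cocommutativity of $R$ converts this to $(\id_A\ot\Delta_A^{\mathrm{op}})$ when commuting past $R^{2,3}$, and commutativity of $C$ (for~(c)) or the outer transposition $(2,3)$ (for~(b)) brings you back. Your observation that $\Phi=1$ is needed for the plain QYBE is also correct and worth flagging; the lemma is only ever applied in the paper to $U_{\h}(\mf g)\rtimes\Z$, where the associator is trivial.
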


\subsection{Representations of $B_n^1$ attached to $(\mf g,t,\sigma)$}\label{sec:resultRep}
Let $(\mf g, t,\sigma)$ be as in Section~\ref{sec:notation}. Let $(a_{ij})$ be the Cartan matrix of $\mf g$ (that is $a_{ij}=\alpha_j(h_i)$) and let $d_i,\ i=1\dots r$ are the unique coprime integers such that $d_i a_{ij}=d_j a_{ji}$. Recall that $U_{\hbar}(\mf g)$ is the $\C[[\h]]$-algebra topologically generated by $(e_i^{\pm}, h_i)$ for $i=1\dots r$, and relations
\begin{align*}
 [h_i,e_j^{\pm}]&=\pm a_{ij}e_j^{\pm} & [h_i,h_j]&=0 \\
  [e_i^{\pm},e_j^{\mp}]&=\delta_{ij} \frac{e^{\h d_ih_i} - e^{-\h d_ih_i}}{e^{\h d_i}-e^{-\h d_i}}
\end{align*}
and the quantum Serre's relations
\[
 \sum_{k=0}^{1-a_{ij}}\frac{(-1)^k}{[k]_q! [1-a_{ij}-k]_q!} (e_i^{\pm})^{1-a_{ij}-k}e_j^{\pm} (e_i^{\pm})^k=0
\]
where  
\[
[x]_q=\frac{e^{\h x}-e^{-\h x}}{e^{\h}-e^{-\h}} 
\]
and
\[
 [x]_q!=\prod_{i=1}^{x} [i]_q,\ x \in \N
\]

Extend the automorphism $\sigma$ to an automorphism $\sigma_{\h}$ of $U_{\h}(\mf g)$ by setting:
\begin{align*}
 \sigma_{\h}(e_i^{\pm})&=\zeta_i^{\pm}e_i^{\pm} & \sigma_{\h}(h_i)&=0
\end{align*}

Let $U_{\h}(\mf b^{\pm})$  be the subalgebra of $U_{\h}(\mf g)$ generated by $(e_i^{\pm},h_i)_{i=1\dots r}$, and let $U_{\h}(\mf n^{\pm})$ be the subalgebras of $U_{\h}(\mf g)$ generated by $(e_i^{\pm})_{i=1\dots r}$. 
Using a quantum analog of the Weyl group, it is possible to define $e_{\alpha}^{\pm}$ for every $\alpha \in R^+$. Moreover, the triangular decomposition theorem~\cite[Chap. 8]{Chari1994} states that the multiplication defines an isomorphism of $\C[[\h]]$-module from $U_{\h}(\mf n^-) \ot U_{\h}(\mf h) \ot U_{\h}(\mf n^+)$ to $U_{\h}(\mf g)$.
Let $A,B$ be topologically free $\C[[\h]]$-algebras which are flat deformations of $\C$-algebras $A_0$ and $B_0$ respectively. The completed tensor product in the $\h$-adic topology is the $\C[[\h]]$-module
\[
A \hat\ot B = (A_0 \ot B_0)[[\h]]
\]
endowed with the algebra structure induced by those of $A$ and $B$.
$U_{\h}(\mf g)$ can be turned into a topological Hopf algebra, the coproduct
\[
\Delta_{\h}:U_{\h}(\mf g) \rightarrow U_{\h}(\mf g) \hat\ot U_{\h}(\mf g)
\]
being defined by:
\begin{align*}
 \Delta_{\h}(h_i)&= h_i\ot 1 +1 \ot h_i\\
 \Delta_{\h}(e_i^{+})&=e_i^{+} \ot e^{ \h d_ih_i} + 1 \ot e_i^+ & \Delta_{\h}(e_i^{-})&=e_i^{-} \ot 1  + e^{ -\h d_ih_i} \ot e_i^-
\end{align*}
and the antipode
\[
S_{\h}(h_i)=-h_i,\ S_{\h}(e_i^+)=-e_i^+e^{-\h d_i h_i},\ S_{\h}(e_i^-)=-e^{\h d_i h_i}e_i^-
\]
$U_{\h}(\mf g)$ admits a quasi-triangular structure given by an element $\R_{\h} \in U_{\h}(\mf g)^{\hat\ot 2}$. Denotes by $A_{\alg}=(U_{\h}(\mf g)\rtimes \Z$ the semi-direct product in which $\tilde \sigma_{\h} \cong 1 \in \Z$ acts by $\sigma_{\h}$. The coproduct of $U_{\h}(\mf g)$ is extended to $A_{\alg}$ by setting $\Delta_{\h}(\tsh)=\tsh \ot \tsh$. Since $\sigma_{\h}^{\ot 2}(\R_{\h})=\R_{\h}$, $(A_{\alg},\R_{\h})$ is a quasi-triangular Hopf algebra.
\begin{thm}\label{thm:rep_gtsig}
 Let $E_{\h,\sigma}=e^{\h (t_{\mf h}+\frac12 t_{\mf h}^{2,2})}(1\ot \tsh)$ and $K=e^{\h t_{\mf h}/2}$. There exists a unique representation of $B_n^1$ in $(U_{\h}(\mf h)\ot A_{\alg}^{\ot n}\rtimes S_n)^{\times}$ given by:
 \begin{align*}
  \tau&\longmapsto \prod_{i=2}^n (\R_{\h}^{1,i})^{-1}\prod_{i=2}^n K^{1,i} E_{\h,\sigma}^{0,1}  \\
  \sigma_i &\longmapsto  (i,i+1)\R_{\h}^{i,i+1}
 \end{align*}
\end{thm}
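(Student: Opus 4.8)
The plan is to apply Theorem~\ref{thm:alg-reps}, via Lemma~\ref{lem:alg}, to the data $A=A_{\alg}$, $C=U_{\h}(\mf h)$, $R=\R_{\h}$, $K=e^{\h\tth/2}$ and $E=E_{\h,\sigma}$. With these choices the two formulas in Theorem~\ref{thm:alg-reps} specialize verbatim to the two formulas in the statement, and the ambient algebra $(C\ot A^{\ot n}\rtimes S_n)^{\times}$ becomes $(U_{\h}(\mf h)\ot A_{\alg}^{\ot n}\rtimes S_n)^{\times}$; so the whole statement reduces to checking the hypotheses of Lemma~\ref{lem:alg}. Two of these are given: $(A_{\alg},\R_{\h})$ is a quasi-triangular Hopf algebra, viewed as a QTQBA with $\Phi=1$ --- here the essential input is $\sigma_{\h}^{\ot2}(\R_{\h})=\R_{\h}$ --- and $U_{\h}(\mf h)$ is a commutative sub-bialgebra, being topologically generated by the pairwise commuting primitive elements $h_i$. (Strictly speaking $E_{\h,\sigma}$ lies in $U_{\h}(\mf h)\ot A_{\alg}$ rather than $U_{\h}(\mf h)^{\ot2}$, because of the factor $1\ot\tsh$; this is harmless, since $\tsh$ is a unit and commutes with $U_{\h}(\mf h)$ --- indeed $\sigma=\Ad(X)$ with $X\in H$, so $\sigma$, hence $\sigma_{\h}$, is the identity on $\mf h$.) It therefore remains to verify the coproduct identities
\begin{gather*}
K^{1,2}=K^{2,1},\qquad (\id\ot\Delta_{\h})(K)=K^{1,2}K^{1,3},\\
(\id\ot\Delta_{\h})(E_{\h,\sigma})=E_{\h,\sigma}^{1,2}E_{\h,\sigma}^{1,3}(K^{2,3})^2.
\end{gather*}
The first two are immediate: $\tth\in S^2(\mf h)$ is symmetric, so $K^{2,1}=K$; and since the $h_i$ are primitive and $\mf h$ is abelian, $(\id\ot\Delta_{\h})(\tth)=\tth^{1,2}+\tth^{1,3}$ with commuting summands, whence $(\id\ot\Delta_{\h})(K)=e^{\h(\tth^{1,2}+\tth^{1,3})/2}=K^{1,2}K^{1,3}$. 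Invertibility of $K$, $E_{\h,\sigma}$, $\R_{\h}$ and of the permutations is clear, so the proposed images of $\tau$ and of the $\sigma_i$ are units.

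The third identity is the only computation of substance. Write $E_{\h,\sigma}=e^{\h(\tth+\frac12\tth^{2,2})}(1\ot\tsh)$, where $\tth=\tth^{1,2}$ and $\tth^{2,2}$ is $\tth$ with both legs in the second tensor factor. Since $\Delta_{\h}(\tsh)=\tsh\ot\tsh$, we have $(\id\ot\Delta_{\h})(1\ot\tsh)=1\ot\tsh\ot\tsh$, and because $\tsh$ acts trivially on $U_{\h}(\mf h)$ this factor commutes with everything arising from the exponential; so it suffices to compute $(\id\ot\Delta_{\h})$ on $e^{\h(\tth+\frac12\tth^{2,2})}$. Writing $\tth=\sum_a x_a\ot y_a$ symmetrically and using primitivity of the $h_i$, one finds $(\id\ot\Delta_{\h})(\tth)=\tth^{1,2}+\tth^{1,3}$ and $(\id\ot\Delta_{\h})(\tth^{2,2})=\tth^{2,2}+\tth^{3,3}+2\,\tth^{2,3}$, the cross term $2\,\tth^{2,3}$ being produced by the symmetry of $\tth$. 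All the resulting summands lie in the abelian algebra $U_{\h}(\mf h)^{\hat\ot 3}$ and commute pairwise, so the exponential factors termwise:
\[
(\id\ot\Delta_{\h})\bigl(e^{\h(\tth+\frac12\tth^{2,2})}\bigr)=e^{\h(\tth^{1,2}+\frac12\tth^{2,2})}\;e^{\h(\tth^{1,3}+\frac12\tth^{3,3})}\;e^{\h\,\tth^{2,3}}.
\]
Reinserting the factor $1\ot\tsh\ot\tsh$ (which slides freely past all of the above) and using $(K^{2,3})^2=e^{\h\,\tth^{2,3}}$ yields exactly $(\id\ot\Delta_{\h})(E_{\h,\sigma})=E_{\h,\sigma}^{1,2}E_{\h,\sigma}^{1,3}(K^{2,3})^2$.

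Thus all the hypotheses of Lemma~\ref{lem:alg}, and hence of Theorem~\ref{thm:alg-reps}, hold, so the asserted representation of $B_n^1$ exists; its uniqueness is automatic, since $\tau,\sigma_1,\dots,\sigma_{n-1}$ generate $B_n^1$ and the formulas prescribe the images of these generators. I do not expect a genuine obstacle here: modulo the Hopf-algebraic facts recalled above, the proof is bookkeeping with tensor-leg labels. The one point that really deserves attention --- and which is, in effect, the reason the construction works at all --- is the compatibility $\sigma_{\h}^{\ot2}(\R_{\h})=\R_{\h}$: it is what makes $A_{\alg}=U_{\h}(\mf g)\rtimes\Z$ quasi-triangular and, at the same time, lets the twist factor $\tsh$ inside $E_{\h,\sigma}$ commute past every occurrence of $\R_{\h}$ in the computations above.
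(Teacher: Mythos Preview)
Your proof is correct and follows essentially the same route as the paper: reduce to Lemma~\ref{lem:alg}, then verify the coproduct identity $(\id\ot\Delta_{\h})(E_{\h,\sigma})=E_{\h,\sigma}^{1,2}E_{\h,\sigma}^{1,3}(K^{2,3})^2$ by expanding the exponential using primitivity of the $h_i$ and $\Delta_{\h}(\tsh)=\tsh\ot\tsh$. The paper's proof only writes out this last identity and takes the other hypotheses as given; you are simply more explicit in checking them, and you correctly flag (and dispose of) the small discrepancy that $E_{\h,\sigma}\in U_{\h}(\mf h)\ot A_{\alg}$ rather than $C^{\ot 2}$.
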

\begin{proof}
 According to Lemma~\ref{lem:alg}, it is enough to prove that
\begin{equation}\label{eq:temp}
  (E_{\h,\sigma})^{1,23}= (E_{\h,\sigma})^{1,2}(E_{\h,\sigma})^{1,3}(K^{2,3})^2
 \end{equation}
 Indeed:
 \begin{align*}
  (1 \ot \Delta_{\h})(q^{ t_{\mf h}^{1,2}+\frac12 t_{\mf h}^{2,2}} (1 \ot \sigma_{\h})) &= q^{ (t_{\mf h}^{1,2}+t_{\mf h}^{1,3})+\frac12t_{\mf h}^{2,2}+\frac12t_{\mf h}^{3,3} +  t_{\mf h}^{2,3}} (1 \ot \tsh \ot \tsh) \\
   &= q^{t_{\mf h}^{1,2}+\frac12 t_{\mf h}^{2,2}} (1  \ot \tsh\ot 1) q^{ t_{\mf h}^{1,3}+\frac12 t_{\mf h}^{3,3} } (1 \ot 1 \ot \tsh) q^{ t_{\mf h}^{2,3}}\\
  &= (E_{\h,\sigma})^{1,2}(E_{\h,\sigma})^{1,3}(K^{2,3})^2
 \end{align*}
\end{proof}

\begin{rmk}
 More generally, the above formulaes leads to representations of $B_n^1$ for any automorphism $\sigma_{\h}$ of $U_{\h}(\mf g)$ which satisfies $\sigma_{\h}^{\ot 2}(\R_{\h})=\R_{\h}$, for example:
 \begin{itemize}
  \item Cartan automorphisms, that is of the form $\sigma_{\h}(e_i^{\pm})=\lambda_i^{\pm}e_i^{\pm}$ where $(\lambda_i)_{i=1\dots n}$ is a family of invertible elements of $\C[[\h]]$
  \item diagram automorphisms~\cite{Etingof2000}.
 \end{itemize}

\end{rmk}

\subsection{Cyclotomic KZ connection, analytic representations of $B_n^1$}
Let $\mu_N \subset \C^{\times}$ be the group of $N$th roots of unity. For each $1\leq i,j\leq n$ and each $\zeta \in \mu_N$, define the hyperplane $D_{i,j,\zeta}=\{(z_1,\dots,z_n)\in \C^n\ |\ z_i=\zeta z_j\}\subset \C^{n}$. Let $W_{n,N}$ be the configuration space
\[
 (\C^{\times})^n - \bigcup_{\substack{1\leq i,j\leq n \\ \zeta \in \mu_N}} D_{i,j,\zeta}
\]
The pure braid group associated to $W_{n,N}$ is $P_{n,N}=\pi_1(W_{n,N}, z^* )$ where $z^* =(z^*_1,\dots,z^*_n) \in \RR^n$ is such that $0< z^*_1<\dots <z^*_n$.

Let now $H_{i,j}$ be the hyperplane $\{(z_1,\dots,z_n)\in \C^n\ |\ z_i=z_j\} \subset \C^n$ and $X_n$ be the configuration space
\[
 \Bigl\{ (\C^{\times})^n - \bigcup_{1\leq i,j\leq n} H_{i,j} \Bigr\} \bigl.\bigr/\mf S_n 
\]
The fundamental group $\pi_1(X_n,\mf S_n z^*)$ is $B_n^1 \cong B_{n+1} \times_{\mf S_{n+1}} \mf S_n$, where $\mf S_n$ is the subgroup of the group of permutations of $\{0,\dots,n\}$ which fix 0. The canonical map 
\[
\begin{array}{ccc}
W_{n,N}&\longrightarrow & X_n\\
(z_1,\dots,z_n) &\longmapsto& \left[(z_1^N,\dots,z_n^N)\right]
\end{array}
\]
is the covering corresponding to the group morphism $\phi_{n,N}:B_n^1\rightarrow (\Z/N\Z)^n \rtimes \mf S_n$. We have then $P_{n,N} \cong \ker \phi_{n,N}$.

Let $A_{\kz}=U(\mf g)[[\h]]\rtimes \Z$ where $\ts\cong 1 \in \Z$ acts by $\sigma$. Let $V$ be a finite dimensional $A_{\kz}$-module and $W$ be a finite dimensional $\mf h$-module. If $m$ is the multiplication of $U(\mf h)$, let $t_{\mf h}^{i,i}$ be equal to $m(t_{\mf h}) \in U(\mf h)$ viewed as an element of $End(V)$ acting on the $i$th component of $W \ot V^{\ot n}$ (here $W$ has index 0). In the same way, $t_{\mf h}^{0,i}$ is viewed as an element of $End(W\ot V)$ acting on the 0th and the $i$th component of $W \ot V^{\ot n}$, and $t^{i,j}$ is defined similary. Then, the cyclotomic KZ differential system is
\[
 \frac{\partial H(z_1,\dots, z_n)}{\partial z_i} = \frac{\h}{2\pi\sqrt{-1}} \left( \frac{N(t_{\mf h}^{0,i} + \frac12 t_{\mf h}^{i,i})}{z_i}+ \sum_{j\neq i, j=1}^n \sum_{a \in \Z/N\Z} \frac{(\sigma^a \ot 1)(t^{i,j})}{z_i - \zeta_N^a z_j} \right) H(z_1,\dots,z_n)
\]
where $H$ is a function $H:W_{n,N} \rightarrow W\ot V^{\ot n}[[\h]]$. This system is compatible, and thus defines a flat connection over $W_{n,N}$ with fiber $W\ot V^{\ot n}[[\h]]$. It follows that it induces a monodromy morphism
\[
 \pi_1(W_{n,N}) \longrightarrow GL(W \ot V^{\ot n}[[\h]])
\]
There is a natural action of $G_{n,N}=(\Z/N\Z)^n \rtimes S_n$ on $W_{n,N}$, and the action of $\sigma$ on $V$ induces an action of $G_{n,N}$ on $W \ot V^{\ot n}$ (it acts trivially on $W$).
The KZ connection is $G_{n,N}$-equivariant with respect to these actions, which implies that it also induces a monodromy representation of $B_n^1$. Moreover, as both $t$ and $t_{\mf h}$ are $\mf h$-invariant, so is the KZ system.
\begin{rmk}
 $G_{n,N}$ is a complex reflection group, denoted by $G(N,1,n)$ in the Shephard-Todd classification~\cite{Shephard1954}. in particular, $G_{n,2}$ is the Coxeter group of type $B$. It turns out~\cite{Broue1998} that $D_{i,j,\zeta}$ and $\{z_i=0\}$ are the reflecting hyperplanes of $G_{n,N}$.
\end{rmk}

\subsection{Equivalence of representations}
It follows from the previous sections that for any finite dimensional $ U(\mf g)\rtimes \Z$-module $V$ and any finite dimensional $\mf h$-module $W$, one can construct two representations $\rho_{\h}$ and $\rho_{\kz}$ of $B_n^1$ in $W \ot V^{\ot n}[[\h]]$. The rest of this paper is devoted to a proof of the following theorem:
\begin{thm}
 The representations $\rho_{\h}$ and $\rho_{\kz}$ are equivalent.
\end{thm}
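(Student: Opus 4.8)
The strategy follows the blueprint outlined in the introduction: reformulate both representations in terms of quasi-reflection algebras (QRAs) over quasi-triangular quasi-bialgebras (QTQBAs), and show that these two QRAs are twist-equivalent, which will imply the equivalence of the induced representations of $B_n^1$. First I would recall Enriquez's formalism of QRAs $(B,\Delta_B,E_B,\Psi_B)$ over a QTQBA $(A,\Delta_A,\R_A,\Phi_A)$, together with the twist operation and its compatibility with the construction of $B_n^1$-representations; this reduces the theorem to the assertion that a certain pair of QRAs over $(U(\mf g)[[\h]],\Delta_0,\exp(\h t/2),\Phi_{\kz})$ and over $(A_{\alg},\Delta_{\h},\R_{\h},1)$ respectively are related by a twist. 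The two QRAs are: on the ``analytic'' side, $(U_{\h}(\mf h)[[\h]],\Delta_0,E_{\kz},\Psi_{\kz})$, where $\Psi_{\kz}$ and $E_{\kz}$ are extracted from the cyclotomic KZ system by the usual asymptotic-zones analysis of the fundamental solution near the walls of the configuration space; on the ``algebraic'' side, $(U_{\h}(\mf h),\Delta_{\h},E_{\h},\Psi_{\h})$, where $\Psi_{\h}$ is the shifted dynamical twist built from the quantum Shapovalov form of~\cite{Enriquez2007a}, and $E_{\h}=E_{\h,\sigma}$ is as in Theorem~\ref{thm:rep_gtsig}.

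The heart of the argument has two parts. The first is to identify the representations coming from each QRA with $\rho_{\kz}$ and $\rho_{\h}$ respectively. For $\rho_{\h}$ this is essentially Theorem~\ref{thm:rep_gtsig} together with the verification that $\Psi_{\h}$, via the octagon and mixed-pentagon equations, produces exactly the operators $\prod (\R_{\h}^{1,i})^{-1}\prod K^{1,i}E_{\h,\sigma}^{0,1}$; here one uses that $\Psi_{\h}$ solves a modified ABRR equation and that its associated $E$-element is $E_{\h,\sigma}$. For $\rho_{\kz}$ one must solve the cyclotomic KZ system near the ``cyclotomic'' wall $z_1\to 0$ and near a collision wall $z_i=z_j$, read off the regularized holonomies, and check that the monodromy generators $\tau,\sigma_i$ are given by $E_{\kz},\Phi_{\kz},\R_{\kz}$ in the standard way — this is the cyclotomic analogue of Drinfeld's identification of $\Phi_{\kz}$-monodromy with the associator, and it is done in Section~\ref{sec:equiv}. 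The second, and main, part is the rigidity argument showing the two QRAs are twist-equivalent. The Drinfeld isomorphism already gives $U_{\h}(\mf g)[[\h]]\cong U(\mf g)[[\h]]$ and $U_{\h}(\mf h)\cong U(\mf h)[[\h]]$ as algebras, and Drinfeld's theorem gives a twist $J$ transporting $(U_{\h}(\mf g),\Delta_{\h},\R_{\h},1)$ to $(U(\mf g)[[\h]],\Delta_0,\exp(\h t/2),\Phi_{\kz})$; one then needs a ``reflection twist'' $L\in (U_{\h}(\mf h)\hat\ot U_{\h}(\mf g))^{\times}$ carrying $(\Psi_{\h},E_{\h})$ to $(\Psi_{\kz},E_{\kz})$ compatibly with $J$. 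Its existence is obtained by a cohomological/degree argument: both $\Psi_{\h}$ and $\Psi_{\kz}$ solve (up to the twist $J$) the same mixed-pentagon equation with the same classical limit $\h\, r^{0,\cdot}$-type term, and one shows order by order in $\h$ that the obstruction to correcting one into the other lies in a trivial cohomology group — essentially $H$-invariance plus the fact that the relevant space of ``derivations'' consists of coboundaries, as in the proof of uniqueness of dynamical twists up to gauge.

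The step I expect to be the main obstacle is the rigidity/uniqueness argument for the reflection twist $L$: unlike Drinfeld's original setting, here the ABRR-type equation is the \emph{dynamical} (shifted by $\sigma$) one, and one must control simultaneously the twist $\Psi$ and the element $E$ satisfying the octagon, while ensuring $L$ lands in the non-localized algebra $U_{\h}(\mf h)\hat\ot U_{\h}(\mf g)^{\hat\ot 2}$ rather than only in a localization. Concretely, the difficulty is to set up the right deformation-complex computing obstructions to twist-equivalence of QRAs over a fixed QTQBA and to show its relevant cohomology vanishes; the role of the hypothesis $\mf g^{\sigma}=\mf h$ (equivalently that $\sigma$ is ``regular'') is precisely to make the relevant weight spaces behave well so that this vanishing holds. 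Once $L$ is produced, conjugation by $J$ and $L$ (suitably placed on the $W$- and $V$-tensor factors) intertwines $\rho_{\h}$ and $\rho_{\kz}$, completing the proof.
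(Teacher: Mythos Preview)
Your overall architecture is correct and matches the paper: recast both sides as QRAs, transport the algebraic QRA across Drinfeld's twist $F$ to get a second QRA over $A_{\kz}$, and then compare the two $\Psi$'s. The gap is in your rigidity step. You assert that ``the obstruction to correcting one into the other lies in a trivial cohomology group,'' but this is false: the relevant co-Hochschild-type complex $(C_1,d_1)$ has $H^2\cong(\wedge^2\mf m)^{\mf h}$ (Calaque's computation, Theorem~\ref{thm:damien}), which is nonzero. So two solutions of the mixed pentagon equation over $\Phi_{\kz}$ need \emph{not} be gauge-equivalent, and the order-by-order argument you sketch cannot close as stated.

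What the paper actually does is enlarge the gauge group to $\cg\rtimes\mf h^*[[\h]]$, where $\mf h^*[[\h]]$ acts by \emph{shifts} of the dynamical variable. The order-$n$ obstruction $a\in(\wedge^2\mf m)^{\mf h}$ is shown, by expanding the pentagon to order $n+1$, to satisfy the linearized CYBE $\bcyb(\bar r,a)=0$; then the Etingof--Varchenko classification of dynamical $r$-matrices is invoked to prove that the map $\theta:\mf h^*\to\mathcal T_{\mathrm{def}}(\bar r)$, $\mu\mapsto\overline{(\mu\ot\id^{\ot 2})(dr(0))}$, is an isomorphism (this uses the explicit $\coth$ form of $r$ and the hypothesis $\rho_\alpha\neq\pm\tfrac12$, which is where the regularity of $\sigma$ enters on this side). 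Hence $a$ is absorbed by a shift $\mu\in\mf h^*[[\h]]$, and one obtains $\Psi'=G\star\Psi_\mu$. Only \emph{then} does the octagon equation play its decisive role: since both $\Psi$ and $\Psi'$ satisfy the octagon with the \emph{same} $E_{\kz,\sigma}$, and since $\mf h$-invariant gauge twists commute with $E_{\kz,\sigma}$, one deduces $\alpha(\check\mu)=0$ for all roots $\alpha$, hence $\mu=0$ (Proposition~\ref{prop:rigid}). In short, the octagon is not used to ``control $\Psi$ and $E$ simultaneously'' in a vague sense; it is the specific mechanism that kills the shift ambiguity left over after the (nontrivial) cohomological step.
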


\section{Algebraic construction of representations of $B_n^1$}
The goal of this section is to prove Theorem~\ref{thm:alg-reps} by a direct computation.

 The relation between the $\rho(\sigma_i)$s are satisfied thanks to the QYBE. For $i>1$ the relation 
 \[
 \rho(\tau)\rho(\sigma_i)=\rho(\sigma_i)\rho(\tau)  
 \]
 follows from the QYBE and axiom (c). Thus, it remains to check the relation 
 \[
 \rho(\tau)\rho(\sigma_1)\rho(\tau)\rho(\sigma_1)=\rho(\sigma_1)\rho(\tau)\rho(\sigma_1)\rho(\tau). 
 \]
 Axiom (c) implies that $K^{1,2}K^{1,3}$ commutes with $(R^{2,3})^{-1}$. Therefore, $K^{1,2}K^{1,i}$ comutes with $(R^{2,i})^{-1}$ for $i\in \{3,\dots,n\}$. As $K^{1,j}$ obviously commutes with $(R^{2,i})^{-1}$ if $i\neq j$, it follows that
 \begin{equation}\label{eq:commu}
  \prod_{i=3}^n (R^{2,i})^{-1} \prod_{i=2}^n K^{1,i}= \prod_{i=2}^n K^{1,i} \prod_{i=3}^n (R^{2,i})^{-1} 
 \end{equation}

 Then, $\rho(\sigma_1)\rho(\tau)\rho(\sigma_1)\rho(\tau)$ is equal to:
 \begin{align*}
&(1,2)R^{1,2} \prod_{i=2}^n (R^{1,i})^{-1}\prod_{i=2}^n K^{1,i} E^{0,1}  (1,2)R^{1,2}  \prod_{i=2}^n (R^{1,i})^{-1}\prod_{i=2}^n K^{1,i} E^{0,1}\\
=&(1,2) \prod_{i=3}^n (R^{1,i})^{-1}\prod_{i=2}^n K^{1,i} E^{0,1}  (1,2)  \prod_{i=3}^n (R^{1,i})^{-1}\prod_{i=2}^n K^{1,i} E^{0,1}\\
=& \prod_{i=3}^n (R^{2,i})^{-1}    \prod_{i=3}^n (R^{1,i})^{-1}\prod_{i=2}^n K^{1,i} \prod_{i=3}^n K^{2,i}K^{1,2}E^{0,1}E^{0,2} \\
 \end{align*}
 where the last step follows from~\eqref{eq:commu} and the commutativity of $C$.

On the other hand, $\rho(\tau)\rho(\sigma_1)\rho(\tau)\rho(\sigma_1)$ is equal to
 \begin{align*}
&\prod_{i=2}^n (R^{1,i})^{-1}\prod_{i=2}^n K^{1,i} E^{0,1}  (1,2)R^{1,2}  \prod_{i=2}^n (R^{1,i})^{-1}\prod_{i=2}^n K^{1,i} E^{0,1}(1,2)R^{1,2} \\
=&\prod_{i=2}^n (R^{1,i})^{-1}\prod_{i=2}^n K^{1,i} E^{0,1}   \prod_{i=3}^n (R^{2,i})^{-1}(1,2)\prod_{i=2}^n K^{1,i} E^{0,1}(1,2)R^{1,2} \\
=&\prod_{i=2}^n (R^{1,i})^{-1}\prod_{i=3}^n (R^{2,i})^{-1}\prod_{i=2}^n K^{1,i} E^{0,1}   (1,2)\prod_{i=2}^n K^{1,i} E^{0,1}(1,2)R^{1,2} \\
=&\prod_{i=2}^n (R^{1,i})^{-1}\prod_{i=3}^n (R^{2,i})^{-1}\prod_{i=3}^n K^{1,i}   \prod_{i=3}^n K^{2,i}(K^{1,2})^{2}  E^{0,1}E^{0,2}R^{1,2} \\
 \end{align*}
 Axioms (c) and (d) implies that $K^{1,i}K^{2,i}$ commutes with $R^{1,2}$. Using axiom (b), it follows that:
 \begin{multline*}
 \rho(\tau)\rho(\sigma_1)\rho(\tau)\rho(\sigma_1)=\\
(R^{1,2})^{-1}\prod_{i=3}^n (R^{1,i})^{-1}\prod_{i=3}^n (R^{2,i})^{-1}R^{1,2} \prod_{i=3}^n K^{1,i}   \prod_{i=3}^n K^{2,i}(K^{1,2})^{2}  E^{0,1}E^{0,2}
 \end{multline*}
If $i\neq j$, then $(R^{1,i})^{-1}$ commutes with $(R^{2,j})^{-1}$, implying that
\[
\prod_{i=3}^n (R^{1,i})^{-1}\prod_{i=3}^n (R^{2,i})^{-1}=\prod_{i=3}^n (R^{2,i}R^{1,i})^{-1}
\]
Finally, the QYBE implies that 
\[
(R^{1,2})^{-1}\prod_{i=3}^n (R^{2,i}R^{1,i})^{-1}R^{1,2}= \prod_{i=3}^n (R^{1,i}R^{2,i})^{-1}
\]
which concludes the proof.
\section{Quasi-Reflection Algebras}\label{sec:QRA}
In this section, we recall from~\cite{Enriquez2008} the notion of Quasi-Reflection Algebra (QRA), which is a natural generalization of the notion of Quasi-Triangular Quasi-Bialgebra.

\begin{defi}[\cite{Drinfeld1990}]
Let $A$ be an associative algebra with unit, $\Delta_A$ be an algebra morphism from $A$ to $A^{\ot 2}$, $\R_A \in (A^{\ot 2})^{\times}$ and $\Phi_A \in (A^{\ot 3})^{\times}$. $(A,\Delta_A, \Phi_A,\R_A)$ is a quasi-triangular quasi-bialgebra (QTQBA) if
\begin{align*}
 &\Phi_A^{2,3,4}\Phi_A^{1,23,4}\Phi_A^{1,2,3}=\Phi_A^{1,2,34}\Phi_A^{12,3,4}\\
 &\R_A^{12,3}=\Phi_A^{3,2,1} \R_A^{1,3} (\Phi_A^{1,3,2})^{-1} \R_A^{2,3} \Phi_A^{1,2,3}\\
&\R_A^{1,23}=(\Phi_A^{2,3,1})^{-1} \R_A^{1,3} \Phi_A^{2,1,3} \R_A^{1,2} (\Phi_A^{1,2,3})^{-1}\\
 &\R_A \Delta_A(a) = \Delta_A^{2,1}(a) \R_A,\ \forall a\in A\\
 &(\id_A \ot \Delta_A)\circ \Delta_A(a) =\Phi_A \left[(\Delta_A \ot \id_A)\circ \Delta_A(a)  \right]\Phi_A^{-1},\ \forall a\in A
\end{align*}
\end{defi}

\begin{defi}
A \emph{dynamical pseudo-twist} (DPT) over $A$ is a tuple $(B,\Delta_B,\Psi_B)$ where $B$ is an associative algebra, $\Delta_B$ is an algebra morphism $B \rightarrow B\ot A$ and $\Psi_B$ is an element in $(B \ot A^{\ot 2})^{\times}$ such that:
\begin{equation}
\label{eq:PsiCop}
 (\id_B \ot \Delta_A) \circ \Delta_B(b) = \Psi_B \left[ (\Delta_B \ot \id_A) \circ \Delta_B(b) \right] \Psi_B^{-1},\ \forall b\in B
\end{equation}

\begin{equation}
\label{eq:mixedPenta}
 \Psi_B^{1,2,34} \Psi_B^{12,3,4}=\Phi_A^{2,3,4} \Psi_B^{1,23,4} \Psi_B^{1,2,3}
\end{equation}
\end{defi}

The last relation is called the mixed pentagon relation.
\begin{rmk}
 This is the ``quasi'' version of the notion of comodule-algebra over a bialgebra.
\end{rmk}
\begin{defi}[\cite{Enriquez2008}]
A \emph{Quasi-Reflection Algebra} (QRA) over $A$ is a tuple $(B,\Delta_B,\Psi_B,E_B)$ such that:
\begin{itemize}
 \item $(B,\Delta_B,\Psi_B)$ is a dynamical pseudo-twist over $A$
 \item $E_B$ satisfies the octagon equation 
 \begin{equation}
 \label{eq:octo}
  (\Delta_B \ot \id)(E_B)=\Psi_B^{-1}\R_A^{3,2} \Psi_B^{1,3,2}E_B^{1,3} (\Psi_B^{1,3,2})^{-1}\R_A^{2,3}\Psi_B
 \end{equation}
 and
\begin{equation}\label{eq:Ecop}
\Delta_B(b) E_B = E_B \Delta_B(b),\ \forall b\in B
 \end{equation}
 \end{itemize}
\end{defi}

\begin{prop}
 
The pair $(\R_A,\Phi_A)$ leads to a group morphism $B_n \rightarrow (A^{\ot n}\rtimes S_n)^{\times}$ given by:
\[
 \sigma_i \longmapsto (\Phi_A^{1\dots i-1,i,i+1})^{-1}(i,i+1)\R_A^{i,i+1}\Phi_A^{1\dots i-1,i,i+1} 
\]
 where $\Phi_A^{1\dots i-1,i,i+1}=1$ if $i=1$ and $(\Delta \ot \id_A^{\ot i-2})\dots (\Delta_A \ot \id_A^{\ot 3})\circ(\Delta_A \ot \id_A^{\ot 2})(\Phi_A)$ otherwise. It induces a representation of $B_n$ on $V^{\ot n}$ for each $V \in A\modu$, that is a functor $A\modu\rightarrow B_n\modu$.
 
 In a similar way, a QRA  $(B,\Delta_B,\Psi_B, E_B)$ over $A$ leads to a group morphism $B_n^1 \rightarrow (B \ot A^{\ot n}\rtimes S_n)^{\times}$, the image of the additional Artin generator being defined by:
\[
 \tau \longmapsto (\Psi_B^{0,1,2\dots n}) E_B^{0,1}(\Psi_B^{0,1,2\dots n})^{-1}
\]
where $\Psi_B^{0,1,2\dots n}= (\id_B \ot \id_A \ot \Delta_A \ot \id_A^{\ot n-3})\dots (\id_B \ot \id_A \ot \Delta_A \ot \id_A)\circ (\id_B \ot \id_A \ot \Delta_A)(\Psi_B)$. Therefore, it induces a representation of $B_n^1$ on $W\ot V^{\ot n}$ for each $(W,V) \in B\modu \times A\modu$, that is a functor $B \modu \times A \modu \rightarrow B_n^1\modu$. 
\end{prop}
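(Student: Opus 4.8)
The plan is to verify that the two assignments respect the defining relations of $B_n$, resp.\ $B_n^1$, recorded in Section~\ref{sec:notation}; since these relations present the groups, the morphism is then unique, and the induced functors follow from naturality: every structure element ($\Delta_A$, $\Delta_B$, $\R_A$, $\Phi_A$, $E_B$, $\Psi_B$) acts by a fixed element of the ambient algebra, so any morphism of $A$- (resp.\ $B$-) modules intertwines the operators $\rho(\sigma_i)$, $\rho(\tau)$. Conceptually, the first statement says exactly that $A\modu$, with associativity constraint $\Phi_A$ and braiding $(1,2)\circ(\R_A)_{|V\ot W}$, is a braided monoidal category (Drinfeld's theorem~\cite{Drinfeld1990}), and the second that a QRA over $A$ makes $B\modu$ into a braided module category over $A\modu$ in the sense of~\cite{Enriquez2008}; one may then invoke the corresponding coherence theorems. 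I would instead give the hands-on argument below.

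For the relations among the $\sigma_i$: the case $|i-j|\ge 2$ follows from disjointness of supports once the prefixes $\Phi_A^{1\dots i-1,i,i+1}$ are reconciled using coassociativity; the braid relation $\sigma_i\sigma_{i+1}\sigma_i=\sigma_{i+1}\sigma_i\sigma_{i+1}$ becomes, after using the pentagon to relocate all occurring associators onto the legs $i,i+1,i+2$, the case $n=3$, which is precisely the conjunction of the two hexagon axioms. For the additional relations of $B_n^1$: the relation $\tau\sigma_i=\sigma_i\tau$ with $i>1$ holds because $E_B^{0,1}$ is supported on the legs $0,1$ and hence commutes with $\rho(\sigma_i)$ by support, while the prefix $\Psi_B^{0,1,2\dots n}$ — built from $\Psi_B$ by iterating $\Delta_A$ in its last slot — commutes with $\rho(\sigma_i)$ because an iterated coproduct is invariant under the braiding of two adjacent output legs (a consequence of $\R_A\Delta_A(a)=\Delta_A^{2,1}(a)\R_A$ together with the pentagon for the associator corrections). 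Finally, the ``type B'' relation $\tau\sigma_1\tau\sigma_1=\sigma_1\tau\sigma_1\tau$ is reduced, using the mixed pentagon~\eqref{eq:mixedPenta} and~\eqref{eq:PsiCop} to compress the legs $2,\dots,n$ into a single leg and~\eqref{eq:Ecop} to commute $E_B$ past the image of $\Delta_B$, to the case $n=2$, where it is exactly the octagon equation~\eqref{eq:octo}. Once all relations are checked, $\rho$ is defined and unique.

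The step I expect to be the main obstacle is the coherence bookkeeping: organizing the iterated prefixes $\Phi_A^{1\dots i-1,i,i+1}$ and $\Psi_B^{0,1,2\dots n}$ so that each relation can be localized onto a bounded number of tensor legs. This is controlled by the pentagon, the mixed pentagon~\eqref{eq:mixedPenta} and coassociativity, and is the content of Mac Lane's coherence theorem for braided monoidal categories and its module-category analogue; I would carry it out as in~\cite{Drinfeld1990,Enriquez2008}. When $\Phi_A=1$, as in the situation of Theorem~\ref{thm:alg-reps}, this bookkeeping is nearly trivial and the verification collapses to a direct computation of the type performed in the proof of that theorem.
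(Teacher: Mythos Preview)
Your proposal is correct; the paper itself does not prove this proposition but simply states it as a known result (the $B_n$ part is Drinfeld~\cite{Drinfeld1990}, the $B_n^1$ part is Enriquez~\cite{Enriquez2008}), so you are in fact supplying more than the paper does. Your outline---reducing the braid relation to the hexagons via the pentagon, the commuting relations to disjoint support plus coassociativity, and the type~B relation to the octagon~\eqref{eq:octo} via the mixed pentagon~\eqref{eq:mixedPenta} and~\eqref{eq:Ecop}---is exactly the standard route taken in those references.
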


Recall that if $A=(A,\Delta_A,\Phi_A,\R_A)$ is a QTQBA, the twist of $A$ by an element $F \in (A^{\ot 2})^{\times}$ is the QTQBA $A^F=(A,\Delta_A^F, \Phi_A^F,\R_A^F)$ where
\begin{itemize}
 \item $\Delta_A^F(a)=F^{-1}\Delta_A(a) F$, $a\in A$
\item $\R_A^F=F^{2,1}\R_A F^{-1}$
\item $\Phi_A^F= F^{2,3} F^{1,23} \Phi_A (F^{12,3})^{-1} (F^{1,2})^{-1}$
\end{itemize}

Then, if $B=(B,\Delta_B,\Psi_B,E_B)$ is a QRA over $A$, the twist of $(A,B)$ by $(F,G)$ where $G \in (B\ot A)^{\times}$ is the QRA $B^{(F,G)}=(B,\Delta_B^{G}, \Psi_B^{(F,G)},E_B^{G})$ over $A^F$, where
\begin{itemize}
 \item $\Delta_B^{(F,G)}(b)=G^{-1}\Delta_B(b) G$, $b \in B$
\item $\Psi_B^{(F,G)}=F^{2,3} G^{1,23} \Psi_B (G^{12,3})^{-1} (G^{1,2})^{-1}$
\item $E_B^{G}=G^{-1}E_BG$
\end{itemize}
The categories $A\modu \times B\modu$ and $A^F\modu\times B^{(F,G)}\modu$ identify canonically, because the underlying algebras are the same. The main property of the twist operation is that the following diagram commutes:
\begin{center}
 \begin{tikzpicture}
\matrix (m) [matrix of math nodes, row sep=3em,
column sep=2.5em, text height=1.5ex, text depth=0.25ex]
{ A\modu \times B\modu &   B_n^1\modu \\
A^F\modu \times B^{(F,G)}\modu &  \\ };
\path[->,font=\scriptsize]
(m-1-1) edge node[auto] {} (m-1-2)
(m-2-1) edge[<-] node[above,sloped] {$\simeq$} (m-1-1)
(m-2-1.north east) edge node[auto] {} (m-1-2);
  
 \end{tikzpicture}

\end{center}

 It follows that the corresponding representations of $B_n^1$ are equivalent.
\begin{rmk}
 The defining axioms of QRA have a categorical interpretation. Namely, they imply that $B\modu$ is a braided module category over the braided monoidal category $A\modu$.
\end{rmk}

\section{Algebraic solutions of the ABRR and mixed pentagon equation}
The goal of this section is to give an explicit construction of a QRA over the ``quantum group'' $U_{\h}(\mf g)$. We show that the resulting representations identify with those of Theorem~\ref{thm:rep_gtsig}.

For this, we recall the construction of a dynamical twist based on the quantum Shapovalov form~\cite{Etingof2001}. Whereas the approach of~\cite{Etingof2001} is ``functional'' (i.e. the twist is a map $J:\mf h^* \rightarrow U_q(\mf g)^{\ot 2}$), ours is purely algebraic: the twist is a family $\{ \Psi_{q,m} , m\in \N^r\}$ lying in a localization $U_q(\mf h) \ot U_q(\mf g)^{\ot 2}[1/D^{12}]$ of $U_q(\mf h)\ot U_q(\mf g)^{\ot 2}$. To establish the properties of this twist (namely the ABRR equation (Theorem~\ref{thm:ABRR-Psi}) and the mixed pentagon equation (Theorem~\ref{thm:mixPentaSansShift})), we use an injection
\[
U_q(\mf h) \ot U_q(\mf g)^{\ot 2}[1/D^{12}] \hookrightarrow U'_{\h,loc}(\mf h) \tilde\ot U_{\h}(\mf g)^{\hat\ot 2}[\h^{-1}]
\]
(see Lemma~\ref{lem:QToFormal}).
We then apply to the dynamical twist an algebraic analog of a shift of the functional variable which allows one to avoid the poles of $J(\lambda)$. This leads to a family $\{\Psi_{q,m}^{\sigma_{\nu}}, m\in \N^r\}$ lying in a new localized algebra $U_q(\mf h) \ot U_q(\mf g)^{\ot 2}[1/D_{\nu}^{12}]$. This algebra injects into the \emph{non-localized} algebra $U'_{\h}(\mf h)\hat\ot U_{\h}(\mf g)^{\hat\ot 2}$, which leads to an element $\Psi_{\h,\sigma_{\nu}}$ in this algebra satisfying the mixed pentagon equation and a modified ABRR equation (Theorem~\ref{thm:formalPentaABRR}). The fact that $\Psi_{\h,\sigma_{\nu}}$ is well defined relies on a study of its $\h$-adic valuation (Lemma~\ref{lem:shiftedConvergence}).
This is illustrated in the following diagram:
\begin{center}
 \begin{tikzpicture}
\matrix (m) [matrix of math nodes, row sep=3em,
column sep=8em, text height=1.5ex, text depth=0.25ex]
{\Psi_{q,m} \in U_q(\mf h) \ot U_q(\mf g)^{\ot 2}[1/D^{12}] & U_q(\mf h) \ot U_q(\mf g)^{\ot 2}[1/D_{\nu}^{12}] \ni \Psi_{q,m}^{\sigma_{\nu}}\\
\Psi_{\h} \in U'_{\h,loc}(\mf h) \tilde\ot U_{\h}(\mf g)^{\hat\ot 2}[\h^{-1}]& U'_{\h}(\mf h) \hat\ot U_{\h}(\mf g)^{\hat\ot 2}[\h^{-1}] \ni \Psi_{\h,\sigma_{\nu}}\\ 
};
\path[->,font=\scriptsize]
(m-1-1) edge node[above] {$\simeq$} node[below] {Shift} (m-1-2);
\path[right hook->]
(m-1-1) edge node[left] {} (m-2-1)
(m-1-2) edge node[right] {} (m-2-2)
;
\end{tikzpicture}
\end{center}

\subsubsection*{Conventions}

The Killing form on $\mf g$ is normalized in such a way that $(h_i,h_j)=d_j^{-1}a_{ij}$ and from now on we assume that $t=2(\ ,\ )^{-1}$. It follows that for all $h \in \mf h$, $\alpha_j(h)=(d_jh_j,h)$. Let $\check h_i \in \mf h$ be the unique element such that $(h_i,\check h_j)=\delta_{ij}$, then $t_{\mf h}=2\sum_i \check h_i \ot h_i$. Finally, let $\rho\in \mf h$ be the unique element such that for $j=1\dots r$,  $\alpha_j(\rho)=2d_j$. If $m \in \N^r$, set $\underline{mdh}=\sum_i m_i d_i h_i$ and $\ul{md}=\sum_jm_jd_j$.

\subsection{Quantum groups and localization}
\subsubsection{}The bialgebra $U_{\h}(\mf g)$ admits a ``rational'' form: let $U_q(\mf g)$ be the $\C(q)$-algebra generated by $e_i^{\pm}, k_i^{\pm 1}$ for $i=1\dots r$, with relations
\begin{align*}
 k_ik_j&=k_ik_j & k_i^{\pm 1}k_i^{\mp 1}&=1 \\
 k_j e_i^{\pm} k_j^{-1}&=q^{\pm a_{ji}} e_i & [e_i^+,e_j^-]&=\delta_{ij} \frac{k_i^{d_i} - k_i^{-d_i}}{q^{d_i}-q^{-d_i}}
\end{align*}
and the quantum Serre's relations where $e^{\h}$ is replaced by $q$. The same formulas as for $U_{\h}(\mf g)$ turn $U_q(\mf g)$ into a Hopf algebra. Recall that $U_{\h}(\mf b^{\pm})$ is the subalgebra of $U_{\h}(\mf g)$ generated by $(e_i^{\pm},h_i)_{i=1\dots r}$. $U_q(\mf b^{\pm})$ is defined in a similar fashion. Let $U_q(\mf n^{\pm})$ be the subalgebra of $U_q(\mf g)$ generated by $(e_i^{\pm})_{i=1\dots r}$ and $U_q(\mf h)$ be the subalgebra generated by $(k_i^{\pm 1})_{i=1\dots r}$.

Define a $\Z^r$-grading on $U_q(\mf g)$ by setting $|e_i^{\pm}|=\pm \delta_i$ and $|k_i^{\pm 1}|=0$ where $\delta_i$ is the $i$th basis vector of $\Z^r$. Recall that the element $\bar\R=e^{-\tth/2}\R  \in U_{\h}(\mf g)^{\ot 2}$ can be identified with an invertible element of $U_q(\mf n^+) \hat\ot U_q(\mf n^-)$ where $\hat\ot$ is the completed tensor product over $\C(q)$ with respect to the $\Z^r$-grading.

If $\beta=\sum_{i=1}^{r} n_i\alpha_i$, $n_i \in \Z$, set $k_{\beta}=\prod_{i=1}^r k_i^{n_i}$.

\subsubsection{}In the following, tensor products are understood over $\C(q)$. Recall~\cite[Chap. 9]{Chari1994} that $U_q(\mf g)$ is isomorphic as a $\C(q)$-vector space to
\[
 U_q(\mf h) \ot U_q(\mf n^+) \ot U_q(\mf n^-)
\]
Moreover, it follows from a quantum analog of the PBW Theorem that $U_q(\mf h)$ is isomorphic as an algebra to $\C(q)[k_i^{\pm 1},i=1\dots r]$. $\Z^r$ acts on $U_q(\mf h) \cong \C(q)[k_i^{\pm 1},i=1\dots r]$ by $\delta_j \cdot k_i=q^{a_{i,j}}k_i$. Extend this action to an action of $(\Z^r)^n$ on $U_q(\mf h)^{\ot n+1}$, the first copy of $U_q(\mf h)$ being acted on trivially. Then, the following relation holds for any $x \in  U_{q}(\mf h)^{\ot n+1}$ and any homogeneous element $y \in U_q(\mf h) \ot U_{q}(\mf g)^{\ot n}$:
\begin{equation} \label{eq:comrel}
yx=(|y| \cdot x) y
\end{equation}

Let $A$ be an algebra without zero divisor, recall that a left denominator set for $A$ is a multiplicative subset $D$ of $A-\{0\}$ such that for each $d \in D$, $a\in A$, there exists $a' \in A$, $d' \in D$ satisfying the Ore condition~\cite[Chap. 2]{McConnell2001}: 
\[
a'd=d'a
\]
Then, we have the following generalization of the localization of commutative rings:
\begin{thm}[Ore]
If $D$ is a left denominator set for an algebra $A$ without zero divisor, then there exists a left quotient ring $A[1/D]$ of $A$ in the following sense:
\begin{itemize}
\item there exists an injective algebra morphism $\phi:A \rightarrow A[1/D]$ such that for all $d \in D$, $\phi(d)$ is invertible in $A[1/D]$
\item if $B$ is any algebra and $\theta$ is a morphism $\theta:A \rightarrow B$ such that $\theta(D) \subset B^{\times}$, then there exists a unique morphism $\theta':A[1/D]\rightarrow B$ such that $\theta=\theta' \circ \phi$.
\end{itemize}
\end{thm}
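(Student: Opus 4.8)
The plan is to construct $A[1/D]$ concretely as a ring of formal left fractions and then verify the two stated universal properties directly. The only input is the Ore condition, which I will use in two forms: (i) for $d_1, d_2 \in D$ there exist $c_1, c_2 \in A$ with $c_1 d_1 = c_2 d_2 \in D$, i.e. a ``common left multiple in $D$'' — obtained by applying the stated condition to $d = d_1 \in D$, $a = d_2 \in A$ and using that $D$ is multiplicative; and (ii) for $d \in D$, $a \in A$ there exist $a' \in A$, $d' \in D$ with $a' d = d' a$, the formal identity $a d^{-1} = (d')^{-1} a'$ allowing one to move a denominator to the left past an arbitrary element. Throughout I will also use that $A$, being a domain, has two-sided cancellation.

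First I would put a relation on $D \times A$ by declaring $(d_1, a_1) \sim (d_2, a_2)$ when there are $c_1, c_2 \in A$ with $c_1 d_1 = c_2 d_2 \in D$ and $c_1 a_1 = c_2 a_2$. Reflexivity and symmetry are clear; transitivity is the first genuine computation: given two witnessing relations sharing a middle denominator $d_2$, take a common left multiple of the relevant denominators via (i), use right cancellation by $d_2$ to align the multipliers, and chase the resulting equalities. Let $A[1/D]$ be the quotient, with $d^{-1}a$ denoting the class of $(d,a)$. A short argument with (i) shows any finite family of classes has representatives over a single denominator in $D$; this lets me define $d^{-1}a + d^{-1}b := d^{-1}(a+b)$, and, using (ii) to write $a_1 d_2^{-1} = (d_2')^{-1} a_1'$, the product $(d_1^{-1}a_1)(d_2^{-1}a_2) := (d_2' d_1)^{-1}(a_1' a_2)$, noting $d_2' d_1 \in D$. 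I would then check these descend to the quotient independently of all choices, that they make $A[1/D]$ a unital ring, and that $\phi : A \to A[1/D]$, $\phi(a) = 1^{-1} a$ (assuming $1 \in D$, which we may arrange by enlarging $D$ to $D \cup \{1\}$ without changing the localization), is a unital ring homomorphism with each $\phi(d)$ invertible of inverse $d^{-1} 1$.

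The domain hypothesis does the essential work in two places: in the cancellations used above, and — decisively — in the injectivity of $\phi$, since $\phi(a) = 0$ unwinds to $ca = 0$ for some $c \in D \subset A \setminus \{0\}$, forcing $a = 0$. For the universal property, given $\theta : A \to B$ with $\theta(D) \subset B^{\times}$, I define $\theta'(d^{-1}a) := \theta(d)^{-1}\theta(a)$; applying $\theta$ to a witnessing relation for $\sim$ and inverting shows this is well defined, and the identities (i), (ii) that governed the operations on $A[1/D]$ show $\theta'$ is a ring homomorphism with $\theta = \theta'\circ\phi$. Since every element of $A[1/D]$ equals $\phi(d)^{-1}\phi(a)$, any such $\theta'$ is forced to be this one, giving uniqueness.

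I expect the main obstacle to be purely organizational: verifying transitivity of $\sim$ and that $+$ and $\cdot$ are well defined on equivalence classes. Each of these is a short manipulation, but each requires producing — via repeated use of the Ore condition — a single denominator or multiplier simultaneously compatible with several given fraction identities, followed by a cancellation. Once that bookkeeping is done, the ring axioms, the invertibility of $\phi(d)$, the injectivity of $\phi$, and the universal property all follow by direct checks with no further ideas.
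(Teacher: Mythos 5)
The paper does not prove this theorem: it is stated as a classical fact with a citation to McConnell--Robson, and the only thing the surrounding text actually proves is the follow-up remark that if $\theta$ is injective then $\theta'$ is injective, which lies outside the statement you were asked to prove. Your construction is exactly the standard one from that reference (left fractions $d^{-1}a$ as equivalence classes of pairs in $D\times A$, the Ore condition used in form (i) to produce common left denominators and in form (ii) to slide a denominator past an element, the domain hypothesis used for the cancellations in transitivity and for injectivity of $\phi$, and the universal property checked on representatives), so there is no proof in the paper to compare it against beyond the citation. The outline is correct, and you rightly identify that the substance is the bookkeeping: transitivity of $\sim$ and well-definedness of $+$ and $\cdot$ each require combining two or three Ore-condition instances and then cancelling, which is where ``no zero divisors'' is consumed. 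Two minor remarks. First, a multiplicative subset normally already contains $1$, so the enlargement $D\cup\{1\}$ is a safety net rather than a genuine step. Second, when checking that $\theta'(d^{-1}a)=\theta(d)^{-1}\theta(a)$ is independent of the representative, via $c_1d_1=c_2d_2\in D$ and $c_1a_1=c_2a_2$, you will want to note explicitly that each $\theta(c_i)$ is invertible in $B$ --- it equals $\theta(c_id_i)\theta(d_i)^{-1}$, a product of units --- before cancelling it; this is not one of the elements of $\theta(D)$ that are invertible by hypothesis.
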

Moreover, it is easily seen that if $\theta$ is injective, then so is $\theta'$. Indeed, if $ad^{-1}$ is an element of the kernel of $\theta'$, then $(ad^{-1})d=a$ belongs to $\ker \theta' \cap A=\ker \theta=\{0\}$, meaning that $a=0$ and therefore that $\ker \theta'=\{0\}$.   In order to apply the Ore's Theorem to $U_q(\mf g)$, we need the following:
\begin{lem}\label{lem:OreCond}
Let $A=U_q(\mf h)\ot \bigotimes_{i=1}^n U_i$ where $U_i$ is either $U_q(\mf g)$ or  $U_q(\mf b^{\pm})$, and $D$ be a multiplicative subset of $U_q(\mf h)^{\ot n+1}-\{0\}$ which is stable under the action of $(\Z^r)^n$ on it last $n$ components. Then, $D$ is a left denominator set for $A$.
\end{lem}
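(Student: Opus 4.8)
The plan is to verify the Ore (left denominator) condition directly, using only the commutation relation \eqref{eq:comrel}, the commutativity of $U_q(\mf h)^{\ot n+1}$, and the stability of $D$ under the $(\Z^r)^n$-action. For Ore's theorem to apply one also needs $A$ to have no zero divisors; this is a standard fact about (tensor products over $\C(q)$ of) quantized enveloping algebras and their Borel subalgebras — for instance one may filter $A$ by a PBW filtration whose associated graded is a quantum affine space, a domain, which forces $A$ to be a domain — so I would take it for granted and concentrate on the Ore condition.

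The structural input is the triangular decomposition: since $U_q(\mf g)\cong U_q(\mf h)\ot U_q(\mf n^+)\ot U_q(\mf n^-)$ as a vector space (and analogously for $U_q(\mf b^{\pm})$), with the Cartan part to the left, and since distinct tensor factors of $A$ commute, $A$ is free as a left $U_q(\mf h)^{\ot n+1}$-module on a basis of PBW monomials in the root vectors, each of which is homogeneous for the $(\Z^r)^n$-grading. Consequently every $a\in A$ can be written as a finite sum $a=\sum_k x_k y_k$ with $x_k\in U_q(\mf h)^{\ot n+1}$ and $y_k$ homogeneous, say of degree $\beta_k\in(\Z^r)^n$.

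Now I would fix $d\in D$ and $a=\sum_k x_k y_k\in A$ as above, and build the required $a',d'$ explicitly. By \eqref{eq:comrel} one has $y_k\,d=(\beta_k\cdot d)\,y_k$. Since $D$ is multiplicative and stable under the $(\Z^r)^n$-action on its last $n$ components, the element $d':=\prod_k(\beta_k\cdot d)$ again lies in $D$; and since $U_q(\mf h)^{\ot n+1}$ is commutative, $x'_k:=\bigl(\prod_{j\neq k}(\beta_j\cdot d)\bigr)x_k\in U_q(\mf h)^{\ot n+1}$, so that $a':=\sum_k x'_k y_k\in A$. Then
\begin{align*}
a'\,d &= \sum_k x'_k\,y_k\,d = \sum_k x'_k\,(\beta_k\cdot d)\,y_k\\
 &= \sum_k d'\,x_k\,y_k = d'\,a,
\end{align*}
the third equality being $x'_k(\beta_k\cdot d)=d'x_k$ in the commutative ring $U_q(\mf h)^{\ot n+1}$. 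This is exactly the Ore condition, so $D$ is a left denominator set for $A$.

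I do not expect any serious obstacle: once the PBW-type decomposition (with the Cartan part on the left) and relation \eqref{eq:comrel} are in hand, the rest is bookkeeping. The one place where the hypotheses on $D$ are genuinely used is the verification that the natural candidate denominator $d'$ — a product of $(\Z^r)^n$-translates of $d$ — still lies in $D$, and the only outside ingredient is the domain property of $A$ needed to invoke Ore's theorem.
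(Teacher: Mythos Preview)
Your proof is correct and follows essentially the same route as the paper: decompose $a$ into finitely many homogeneous pieces, use \eqref{eq:comrel} to push $d$ past each piece at the cost of a $(\Z^r)^n$-translate, and take the product of these translates as the new denominator $d'$. The only difference is cosmetic --- the paper works directly with homogeneous summands of $a$ rather than first splitting off the Cartan part $x_k$, so your separation $a=\sum_k x_k y_k$ is a harmless refinement you could drop.
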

\begin{proof}
If $x$ is an homogeneous element of $A$, the Ore condition follows from~\eqref{eq:comrel} and from the fact that $|x|\cdot d \in D$ because $D$ is stable under the action of $(\Z^r)^n$. If $x$ is the sum of two homogeneous elements $x_1,x_2$ and $d \in D$, let
\[
x'=(|x_2|\cdot d)x_1+(|x_1|\cdot d)x_2
\] 
As for $i=1,2$, $|(|x_i|\cdot d)|=0$ it follows that:
\begin{align*}
x'd&=((|x_1|+|x_2|)\cdot d)x_1+((|x_1|+|x_2|)\cdot d)x_2\\
&=((|x_1|+|x_2|)\cdot d)x
\end{align*}
The same argument can obviously be applied in the general case of an arbitrary sum of $n$ homogeneous elements.
\end{proof}
For $\beta \in R^+,i \in \N$ let 
\[
D_{\beta,i}=1-k_{\beta}q^{-\frac i2(\beta,\beta)} \in U_q(\mf h)
\]
and let
\[
D^{12}=\langle (\{0\} \times \Z^r) \cdot \{\Delta_q(D_{\beta,i}), \beta\in R^+, i\in \N\}\rangle \subset U_q(\mf h)^{\ot 2}
\]
and
\[
D^{123}=\langle (\{0\} \times (\Z^r)^2) \cdot \{(\id \ot \Delta_q)\circ\Delta_q(D_{\beta,i}), \beta \in R^+,i\in \N\}\rangle \subset U_q(\mf h)^{\ot 3}
\]
where $\langle X\rangle$ denotes the multiplicative subset spanned by $X$.
\begin{lem}
There are algebra morphisms
\[
U_q(\mf h)\ot U_q(\mf b^+)\ot U_q(\mf n^-)[1/D^{12}] \rightarrow U_q(\mf h)\ot U_q(\mf b^+)\ot U_q(\mf g)\ot  U_q(\mf n^-)[1/D^{12}, 1/D^{123}]
\]
induced by 
\begin{align*}
 X &\mapsto X^{0,1,2} &  X &\mapsto X^{0,12,3} \\
 X &\mapsto X^{0,1,23} &  X &\mapsto X^{01,2,3} \\
\end{align*}
\end{lem}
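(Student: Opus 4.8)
The plan is to construct the four morphisms by universal property of Ore localization (the Ore Theorem recalled above), once we check that each target ring contains the image of the relevant denominator set. First I would observe that all four maps $X\mapsto X^{0,1,2}$, $X\mapsto X^{0,12,3}$, $X\mapsto X^{0,1,23}$, $X\mapsto X^{01,2,3}$ are well-defined \emph{algebra} morphisms before localizing, i.e. from $U_q(\mf h)\ot U_q(\mf b^+)\ot U_q(\mf n^-)$ to $U_q(\mf h)\ot U_q(\mf b^+)\ot U_q(\mf g)\ot U_q(\mf n^-)$: the first is $x\ot y\ot z\mapsto x\ot y\ot 1\ot z$; the second places the three tensor slots in positions $0$, then $\Delta_q$ applied to positions $1,2$ (valued in $U_q(\mf b^+)\ot U_q(\mf g)$, using that $\Delta_q(U_q(\mf b^+))\subset U_q(\mf b^+)^{\ot 2}\subset U_q(\mf b^+)\ot U_q(\mf g)$), then position $3$; the third puts $\Delta_q$ on positions $2,3$; the last applies $\Delta_q$ to positions $0,1$. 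Each is a composition of $\Delta_q$ with canonical inclusions, hence an algebra morphism; here I use the quantum triangular decomposition and the fact that $\Delta_q$ preserves $U_q(\mf b^{\pm})$.

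The key step is then to verify, for each of the four maps $\theta$, that $\theta(D^{12})$ lands in the group of units (or, more precisely, generates a left denominator set) of the target ring $U_q(\mf h)\ot U_q(\mf b^+)\ot U_q(\mf g)\ot U_q(\mf n^-)[1/D^{12},1/D^{123}]$. Concretely, $D^{12}$ is generated, under the action of $\{0\}\times\Z^r$, by the elements $\Delta_q(D_{\beta,i})$ with $\beta\in R^+$, $i\in\N$. Under $X\mapsto X^{0,1,2}$ these go to $(\Delta_q(D_{\beta,i}))^{0,1}=\Delta_q(D_{\beta,i})\ot 1$ sitting in slots $0,1$, which is (a $\Z^r$-shift of) a generator of $D^{12}$ in the target, hence invertible. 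Under $X\mapsto X^{01,2,3}$, $\Delta_q(D_{\beta,i})$ maps to $(\id\ot\Delta_q)\circ\Delta_q(D_{\beta,i})$ placed in slots $0,1,2$ — by coassociativity this equals $(\Delta_q\ot\id)\circ\Delta_q(D_{\beta,i})$, which is (a shift of) a generator of $D^{123}$, hence invertible in the target. Under $X\mapsto X^{0,12,3}$ and $X\mapsto X^{0,1,23}$ one similarly gets, by coassociativity, $(\id\ot\Delta_q)\circ\Delta_q(D_{\beta,i})$ placed in slots $0,1,2$ and $0,2,3$ respectively, again $\Z^r$-shifts of generators of $D^{123}$. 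In each case the $\Z^r$-shifts are absorbed because the target denominator set $D^{12}\cdot D^{123}$ is stable under the $(\Z^r)^2$-action (and the relevant one-parameter shift factors through it), so the images of all generators of $D^{12}$ are invertible. Invoking the universal property of $A[1/D^{12}]$ then yields the desired morphisms; moreover, since the structural inclusion $U_q(\mf h)\ot U_q(\mf b^+)\ot U_q(\mf n^-)\to U_q(\mf h)\ot U_q(\mf b^+)\ot U_q(\mf g)\ot U_q(\mf n^-)$ composed with the localization map is injective (no zero divisors, by Lemma~\ref{lem:OreCond} and the PBW-type decompositions), the resulting morphisms are injective as well, by the remark following Ore's Theorem.

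The main obstacle is bookkeeping with the coassociativity identities and the $\Z^r$-shifts: one must be careful that $(\id\ot\Delta_q)\circ\Delta_q(D_{\beta,i})$ and $(\Delta_q\ot\id)\circ\Delta_q(D_{\beta,i})$ genuinely coincide and genuinely lie in the multiplicative set $D^{123}$ after the appropriate relabeling of tensor slots, and that the extra $\Z^r$-action appearing in the definition of $D^{12}$ (the $\{0\}\times\Z^r$ factor) is compatible with the $(\Z^r)^2$-action defining $D^{123}$ under each of the four slot embeddings. This is a routine but slightly delicate check using~\eqref{eq:comrel} and the explicit form $D_{\beta,i}=1-k_\beta q^{-\frac i2(\beta,\beta)}$; once it is in place, the existence of the morphisms is immediate from the universal property.
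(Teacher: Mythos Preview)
Your approach is exactly the one the paper takes: check that each of the four insertion--coproduct maps sends the denominator set $D^{12}$ into the invertible elements of the target (because its image lands either in $D^{12}$ or in $D^{123}$), and then invoke the universal property of Ore localization. The paper's proof is a two-line version of precisely this argument.

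One small bookkeeping slip: for the map $X\mapsto X^{0,1,23}$, an element of $D^{12}$ sits in slots $0,1$ with $1$ in slot $2$, so applying $\Delta_q$ to the last slot does nothing and the image is again in $D^{12}$ (in slots $0,1$), not in $D^{123}$ placed in slots $0,2,3$ as you wrote. This does not affect the conclusion, since both $D^{12}$ and $D^{123}$ are inverted in the target. Your added remark on injectivity is correct and goes slightly beyond what the lemma asserts.
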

\begin{proof}
The image of $D^{12}$ by each of these maps is a subset either of $D^{12}$ or of $D^{123}$, which contains only invertible elements by construction. Thus, the proof follows from the universal property of the localization.
\end{proof}
\subsubsection{}Let $U_{\h}'(\mf h)$ be the Quantum Formal Series Hopf Algebra~\cite{Gavarini2002} associated to $U_{\h}(\mf h)$, that is the subalgebra of $U_{\h}(\mf h)$ generated by $\h h_i,\ i=1\dots r$. It is a flat deformation of the algebra $\widehat{S(\mf h)}$ of formal functions on $\mf h$. Indeed, there is an algebra isomorphism
\[
\begin{array}{rcl}
U_{\h}'(\mf h)& \longrightarrow & \C[[u_1,\dots,u_r,\h]]\\
\h h_i & \longmapsto & u_i
\end{array}
\]
For $\beta=\sum_{i=1}^r \beta_i \alpha_i \in R^+$, let
\[
\ell_{\beta}=\sum_{i=1}^r \beta_i d_i u_i 
\]
and set
\[
U'_{\h,loc}(\mf h)=\C[[u_1,\dots,u_r]][\frac1{\ell_{\beta}},\beta \in R^+][[\h]]
\]
Let $A$ be a topologically free $\C[[\h]]$-algebra. Recall that it means that there exists a $\C$-vector space $A_0$ such that $A$ is isomorphic to $A_0[[\h]]$ as a $\C[[\h]]$-module. We define the completed tensor product
\[
U'_{\h,loc}(\mf h) \tilde \ot A = A_0[[u_1,\dots,u_r]] [\frac1\ell_{\beta},\beta \in R^+][[\h]]
\]

\begin{lem}
The coproduct $\Delta_{\h}:U_{\h}(\mf h)\rightarrow U_{\h}(\mf h)^{\hat\ot 2}$ induces an algebra morphism
\[
U_{\h,loc}'(\mf h) \longrightarrow U_{\h,loc}'(\mf h) \tilde\ot U_{\h}(\mf h)
\]
\end{lem}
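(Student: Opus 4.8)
The plan is to obtain the desired morphism as an extension, along the localization $U'_{\h}(\mf h)\to U'_{\h,loc}(\mf h)$, of the morphism already carried by $U'_{\h}(\mf h)$ as a Quantum Formal Series Hopf Algebra; the whole issue then reduces to a single invertibility statement in the $\h$-adic topology. Recall that $U'_{\h}(\mf h)\cong\C[[u_1,\dots,u_r,\h]]$ (with $\h h_i\leftrightarrow u_i$) and that $\Delta_{\h}$ induces on it the algebra morphism $U'_{\h}(\mf h)\to U'_{\h}(\mf h)\hat\ot U_{\h}(\mf h)$ determined by $u_i\mapsto u_i\ot 1+\h\,(1\ot h_i)$ --- this is just $\Delta_{\h}(\h h_i)=\h h_i\ot 1+1\ot \h h_i$ with the first $\h h_i$ read as the generator $u_i$. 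Hence, for $\beta=\sum_i\beta_i\alpha_i\in R^+$ and $y_\beta:=\sum_i\beta_i d_i h_i\in U_{\h}(\mf h)$,
\[
\Delta_{\h}(\ell_\beta)=\ell_\beta\ot 1+\h\,(1\ot y_\beta)\ \in\ U'_{\h}(\mf h)\hat\ot U_{\h}(\mf h)\ \hookrightarrow\ U'_{\h,loc}(\mf h)\,\tilde\ot\,U_{\h}(\mf h).
\]
Since $U'_{\h,loc}(\mf h)$ is obtained from the commutative ring $U'_{\h}(\mf h)$ by inverting the (non-zero, hence non-zero-divisor) forms $\ell_\beta$ and then completing $\h$-adically, and since the target $U'_{\h,loc}(\mf h)\,\tilde\ot\,U_{\h}(\mf h)$ is itself $\h$-adically complete and $\h$-torsion free, the universal property of this (completed) localization reduces the claim to showing that $\Delta_{\h}(\ell_\beta)$ is invertible in $U'_{\h,loc}(\mf h)\,\tilde\ot\,U_{\h}(\mf h)$ for every $\beta$.

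For that invertibility I would factor
\[
\Delta_{\h}(\ell_\beta)=\ell_\beta\ot 1+\h\,(1\ot y_\beta)=(\ell_\beta\ot 1)\bigl(1+\h\,(\ell_\beta^{-1}\ot y_\beta)\bigr).
\]
Here $\ell_\beta\ot 1$ is invertible in $U'_{\h,loc}(\mf h)\,\tilde\ot\,U_{\h}(\mf h)$, with inverse $\ell_\beta^{-1}\ot 1$, by the very construction of $U'_{\h,loc}(\mf h)$; and $1+\h\,(\ell_\beta^{-1}\ot y_\beta)$ is invertible, its inverse being the $\h$-adically convergent geometric series $\sum_{k\ge 0}(-\h)^k(\ell_\beta^{-1}\ot y_\beta)^k$ --- here one uses that $\ell_\beta^{-1}\ot y_\beta$ is a genuine element of the $\h$-adically complete target. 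Thus $\Delta_{\h}(\ell_\beta)$ is a unit, and the desired (unique) extension exists.

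The point to watch --- and the reason the lemma is not automatic --- is that inside $U_{\h}(\mf h)$ one has $\ell_\beta=\h y_\beta$, so $\Delta_{\h}(\ell_\beta)$ naively looks divisible by $\h$, hence not invertible in any $\h$-adic sense. What saves the argument is that the induced morphism lands in $U'_{\h}(\mf h)\hat\ot U_{\h}(\mf h)$, in whose \emph{first} tensor factor $\ell_\beta$ is \emph{not} $\h$-divisible --- it becomes a unit only after localization --- so that only the second term $\h\,(1\ot y_\beta)$ is genuinely $\h$-small, which is precisely what makes the displayed factorization legitimate. The remaining work is bookkeeping: verifying that the localization implicit in $U'_{\h,loc}(\mf h)$ is compatible with its $\h$-adic completion, i.e. that any morphism from $U'_{\h}(\mf h)$ to an $\h$-adically complete algebra sending all $\ell_\beta$ to units extends uniquely to $U'_{\h,loc}(\mf h)$ --- obtained by first extending to the plain localization and then, by completeness, to its $\h$-adic completion.
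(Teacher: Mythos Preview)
Your proof is correct and follows essentially the same approach as the paper: identify $\Delta_{\h}(\ell_\beta)=\ell_\beta\ot 1+\h(1\ot y_\beta)$ and invert it via the geometric series $\sum_{k\ge0}(-\h)^k\ell_\beta^{-(k+1)}\ot y_\beta^k$, then invoke the universal property of the localization. The paper's proof is more terse and simply writes down this inverse, while you additionally explain the subtlety (that $\ell_\beta$ is not $\h$-divisible in the first tensor factor) and spell out the factorization, but the argument is the same.
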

\begin{proof}
Under the identification $U_{\h}'(\mf h)\cong \C[[u_1,\dots,u_n,\h]]$, the coproduct of $U_{\h}(\mf h)$ becomes an algebra morphism $\C[[u_1,\dots,u_n,\h]]\rightarrow\C[[u_1,\dots,u_n,\h]]\hat \ot U_{\h}(\mf h)$ induced by
\[
u_i \longmapsto u_i \ot 1 + 1 \ot \h h_i
\]
which induces an algebra morphism $\C[[u_1,\dots,u_n,\h]]\rightarrow\C[[u_1,\dots,u_n]][\frac1{\ell_{\beta}}][[\h]]\tilde \ot U_{\h}(\mf h)$. As the image of $\ell_{\beta}$ ($\beta=\sum_i \beta_i \alpha_i$) through this map is invertible, its inverse being
\[
\sum_{k\geq 0} \h^k (-1)^k \frac1{\ell_{\beta}^{k+1}} \ot (\sum_i (\beta_id_ih_i))^k,
\]
it extends to an algebra morphism
\[
U_{\h,loc}'(\mf h) \longrightarrow U_{\h,loc}'(\mf h) \tilde\ot U_{\h}(\mf h)
\]
\end{proof}
\subsubsection{} There is a field morphism $\C(q)\rightarrow \C[[\h]][\h^{-1}]$ given by $q \mapsto e^{\h}$, and a compatible injective Hopf algebra morphism
\[
U_q(\mf g) \rightarrow U_{\h}(\mf g)[\h^{-1}]
\]
given by
\begin{align*}
e_i^{\pm} &\mapsto e_i^{\pm}&  k_i & \mapsto e^{\h d_i h_i}
\end{align*}
\begin{lem}\label{lem:QToFormal}
There are injective algebra morphisms
\[
U_q(\mf h)\ot U_q(\mf b^+) \ot U_q(\mf n^-)[1/D^{12}] \rightarrow U'_{\h,loc}(\mf h) \tilde\ot (U_{\h}(\mf b^+) \hat\ot U_{\h}(\mf n^-))[\h^{-1}]
\]
and
\[
U_q(\mf h)\ot U_q(\mf b^+)\ot U_q(\mf g) \ot U_q(\mf n^-)[1/D^{12},1/D^{123}] \rightarrow U'_{\h,loc}(\mf h) \tilde\ot (U_{\h}(\mf b^+)\hat\ot U_{\h}(\mf g) \hat\ot U_{\h}(\mf n^-))[\h^{-1}]
\]
which commute with the insertion-coproduct morphisms.
\end{lem}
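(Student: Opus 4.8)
The plan is to build the two morphisms out of the embedding $U_q(\mf g)\to U_{\h}(\mf g)[\h^{-1}]$ by acting on each tensor factor separately, to check that the images of the denominator sets $D^{12}$ and $D^{123}$ consist of invertible elements, and then to invoke the universal property of Ore localization; the injectivity will be reduced to the non-localized case, where it is settled by a PBW--Vandermonde argument.

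First I would construct the non-localized maps. Restricting $U_q(\mf g)\to U_{\h}(\mf g)[\h^{-1}]$ gives $\C(q)$-algebra morphisms $U_q(\mf b^+)\to U_{\h}(\mf b^+)[\h^{-1}]$ and $U_q(\mf n^-)\to U_{\h}(\mf n^-)[\h^{-1}]$ and, since $k_i\mapsto e^{\h d_ih_i}$ lies in the formal series subalgebra $U_{\h}'(\mf h)$, a morphism $U_q(\mf h)\to U_{\h}'(\mf h)[\h^{-1}]$ sending $k^\nu$ to $e^{\h\underline{\nu dh}}$. Tensoring these over $\C(q)$ (along the field embedding $\C(q)\hookrightarrow\C[[\h]][\h^{-1}]$, $q\mapsto e^{\h}$) and composing with the canonical maps to the completed tensor product and into $U'_{\h,loc}(\mf h)\tilde\ot(-)[\h^{-1}]$ produces a $\C(q)$-algebra morphism from $U_q(\mf h)\ot U_q(\mf b^+)\ot U_q(\mf n^-)$ to $U'_{\h,loc}(\mf h)\tilde\ot(U_{\h}(\mf b^+)\hat\ot U_{\h}(\mf n^-))[\h^{-1}]$, and the analogous one with the extra factor $U_q(\mf g)\to U_{\h}(\mf g)[\h^{-1}]$ inserted for the second assertion.

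For injectivity I would work with PBW bases. The quantum root vectors yield $\C(q)$-bases $\{\hat F_J\}$ of $U_q(\mf n^+)$ and $\{\hat E_K\}$ of $U_q(\mf n^-)$ whose images form parts of topological $\C[[\h]]$-bases of $U_{\h}(\mf n^{\pm})$ — hence $\C[[\h]][\h^{-1}]$-bases after inverting $\h$ — while $\{k^\nu:\nu\in\Z^r\}$ is a $\C(q)$-basis of $U_q(\mf h)$. Given a $\C[[\h]][\h^{-1}]$-linear relation among the images of the product basis elements, I would extract coefficients in turn against the $\hat E_K$, against the PBW monomials $h^\lambda\hat F_J$ of the middle slot (using $e^{\h\underline{\nu dh}}=\sum_\lambda\frac{\h^{|\lambda|}}{\lambda!}\prod_j(\nu_jd_j)^{\lambda_j}\,h^\lambda$), and finally in the Cartan slot against the exponentials $e^{\h\underline{\mu dh}}$, which are linearly independent (as one sees on their leading $u$-coefficients). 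In each case the relation reduces to an identity $\sum_{\nu}c_{\nu}(\h)\prod_j(\nu_jd_j)^{\lambda_j}=0$ holding for all multi-indices $\lambda$; since only finitely many $\nu\in\Z^r$ occur and the $d_j$ are fixed, the evaluation vectors $\bigl(\prod_j(\nu_jd_j)^{\lambda_j}\bigr)_{\lambda}$ are linearly independent by a generalized Vandermonde argument, so every $c_\nu$ vanishes and the non-localized maps are injective. As for the denominators: the generators of $D^{12}$ and $D^{123}$, together with their $(\Z^r)$- and $(\Z^r)^2$-translates, have the form $1-c\,k_\beta^{\ot 2}$ and $1-c\,k_\beta^{\ot 3}$ with $c$ a power of $q$; their images involve the Cartan slot $U'_{\h,loc}(\mf h)$ and reduce modulo $\h$ to $1-e^{\ell_\beta}=\ell_\beta\,u$ with $u$ a unit of $\C[[u_1,\dots,u_r]]$, which is invertible in $\C[[u_1,\dots,u_r]][1/\ell_\beta]$; as $U'_{\h,loc}(\mf h)\tilde\ot(-)$ is $\h$-adically complete, these images are units of $U'_{\h,loc}(\mf h)\tilde\ot(-)[\h^{-1}]$. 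By Lemma~\ref{lem:OreCond} the sets $D^{12}$ and $D^{123}$ are left denominator sets, so the universal property of Ore localization extends the non-localized maps uniquely to the localizations, and by the observation that localization preserves injective morphisms the extensions are again injective.

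Finally, compatibility with the insertion--coproduct morphisms $X\mapsto X^{0,1,2},X^{0,12,3},X^{0,1,23},X^{01,2,3}$: these and their counterparts on the target are assembled from $\Delta_q$, resp.\ $\Delta_{\h}$, and inclusions of subalgebras, and $U_q(\mf g)\to U_{\h}(\mf g)[\h^{-1}]$ is a Hopf algebra morphism, so the relevant squares commute on the non-localized algebras; since both composites around each square send the $q$-denominators into the inverted sets and agree on a generating subalgebra, uniqueness in the universal property forces the localized squares to commute as well. The main obstacle is the injectivity step: one cannot simply invoke ``a tensor product of injective maps is injective'', since the base change $\C(q)\hookrightarrow\C[[\h]][\h^{-1}]$ does not preserve injectivity of tensor products; this is precisely what requires the PBW bookkeeping above, and in particular the verification that the exponentials $\{e^{\h\underline{\nu dh}}\}_{\nu\in\Z^r}$ spanning the image of $U_q(\mf h)$ remain linearly independent over $\C[[\h]][\h^{-1}]$ after completion.
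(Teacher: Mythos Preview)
Your proof is correct and follows the same approach as the paper: show that the generators of $D^{12}$ (and $D^{123}$) map to elements reducing modulo $\h$ to $1-e^{\ell_\beta}$, hence to units of the target, and then invoke the universal property of Ore localization together with the fact that localization preserves injectivity. Your PBW/Vandermonde verification of injectivity for the non-localized map is more explicit than the paper, which simply relies on the injective Hopf algebra morphism $U_q(\mf g)\to U_{\h}(\mf g)[\h^{-1}]$ recorded just before the lemma; the extra care is warranted, since it is precisely the passage to completed tensor products over the new base $\C[[\h]][\h^{-1}]$ that could in principle spoil injectivity.
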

\begin{proof}
The image of $D^{12}$ in $U'_{\h}(\mf h) \hat \ot U_{\h}(\mf b^+)\hat \ot U_{\h}(\mf n^-)[\h^{-1}]$ is the multiplicative part spanned by
\begin{multline}\label{eq:invSet}
\left\{1-\left(\exp(\ell_{\beta}) \ot \exp(\h\sum_i\beta_id_ih_i)\ot 1\right)\right.\\ \left.\times\exp\left(-\h\left(\frac i2 (\beta,\beta)+\sum_{j,k}\beta_jm_k a_{jk}\right)\right)\ \vert \ \beta \in R^+,i\in \N \right\}
\end{multline}
These elements actually belong to $U'_{\h}(\mf h) \hat \ot U_{\h}(\mf b^+)\hat \ot U_{\h}(\mf n^-)$ and are invertible in this algebra, as they reduce to elements of the form $1-\exp(\ell_{\beta})$ modulo $\h$, the latter being invertible in $\C[[u_1,\dots,u_r]][\frac1{\ell_{\beta}}]$ by construction. Hence elements of~\eqref{eq:invSet} are invertible, which implies that the above morphism factors through $U_q(\mf h) \ot U_q(\mf b^+)\ot U_q(\mf n^-)[1/D^{12}]$. The same argument apply to the second case, and the compatibility with the insertion-coproduct maps is obvious.
\end{proof}
\subsubsection{Shift of a localization}\label{subsubsec:ShiftLoc}For $\nu=(\nu_1,\dots,\nu_r) \in (\C^{\times})^r$ define the automorphism $\Sh_{\nu}$ of $U_q(\mf h)$ by 
\[
\Sh_{\nu}(k_i^{\pm 1})=\nu_i^{\pm 1}k_i^{\pm 1}
\]
and set $D_{\nu}^{12}=(\Sh_{\nu}\ot \id) (D^{12})$. This is again a Ore set for $U_q(\mf h) \ot U_q(\mf b^+) \ot U_q(\mf n^-)$ thanks to Lemma~\ref{lem:OreCond}. By construction, $\Sh_{\nu}\ot \id^{\ot 2}$ is an algebra isomorphism
\[
U_q(\mf h)\ot U_q(\mf b^+) \ot U_q(\mf n^-)[1/D^{12}] \rightarrow U_q(\mf h)\ot U_q(\mf b^+) \ot U_q(\mf n^-)[1/D_{\nu}^{12}]
\]
The following lemma shows that by applying an appropriate shift, the above localized "rational" algebra can be injected into the \emph{non-localized} "formal" algebra:
\begin{lem}\label{lem:morphShift}
Assume that $\nu_{\beta}\neq 1$ for any $\beta \in R^+$, then the natural map 
\[
U_q(\mf h) \ot U_q(\mf b^+) \ot U_q(\mf n^-) \rightarrow U'_{\h}(\mf h)\hat\ot U_{\h}(\mf b^+) \hat \ot U_{\h}(\mf n^-)[\h^{-1}]
\]
extends to an injective algebra morphism
\[
U_q(\mf h) \ot U_q(\mf b^+) \ot U_q(\mf n^-)[1/D^{12}_{\nu}]\rightarrow U'_{\h}(\mf h) \hat\ot U_{\h}(\mf b^+) \hat\ot U_{\h}(\mf n^-)[\h^{-1}]
\]
\end{lem}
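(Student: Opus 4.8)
The plan is to imitate the proof of Lemma~\ref{lem:QToFormal}, the only genuinely new point being that the shift $\Sh_\nu$ moves the denominators away from the origin of $\mf h$, which is precisely what lets us land in the \emph{non-localized} formal algebra. Recall that $D^{12}_\nu=(\Sh_\nu\ot\id)(D^{12})$ has already been observed to be a left denominator set for $U_q(\mf h)\ot U_q(\mf b^+)\ot U_q(\mf n^-)$ via Lemma~\ref{lem:OreCond}, so the localization exists; by the universal property of the Ore localization it then suffices to show that the natural map
\[
\theta\colon U_q(\mf h)\ot U_q(\mf b^+)\ot U_q(\mf n^-)\longrightarrow U'_\h(\mf h)\hat\ot U_\h(\mf b^+)\hat\ot U_\h(\mf n^-)[\h^{-1}]
\]
sends every element of $D^{12}_\nu$ to a unit of its target. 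Once this is done, injectivity of the resulting extension follows from the injectivity of $\theta$ by the remark made right after Ore's theorem.

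The key step is to compute the $\theta$-image of a generating family of $D^{12}_\nu$, namely of the elements $(0,m)\cdot(\Sh_\nu\ot\id)(\Delta_q(D_{\beta,i}))$ for $\beta\in R^+$, $i\in\N$, $m\in\Z^r$. Since the $k_i$ are group-like, $\Delta_q(D_{\beta,i})=1\ot 1-q^{-\frac i2(\beta,\beta)}k_\beta\ot k_\beta$, and the computation carried out for Lemma~\ref{lem:QToFormal} applies verbatim, except that $\Sh_\nu$ contributes an extra scalar factor $\nu_\beta:=\prod_i\nu_i^{\beta_i}$ (for $\beta=\sum_i\beta_i\alpha_i$); so the image is the element displayed in~\eqref{eq:invSet} multiplied by $\nu_\beta$. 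I would then reduce this modulo $\h$: every exponential of an $\h$-multiple collapses to $1$, leaving $1-\nu_\beta\exp(\ell_\beta)\in\C[[u_1,\dots,u_r]]$, whose value at $u_1=\dots=u_r=0$ equals $1-\nu_\beta$. By hypothesis $\nu_\beta\neq 1$, so this is a nonzero scalar, and hence $1-\nu_\beta\exp(\ell_\beta)$ is a unit of the local ring $\C[[u_1,\dots,u_r]]$ --- in contrast with Lemma~\ref{lem:QToFormal}, where the same reduction gives $1-\exp(\ell_\beta)$ and one is forced to invert $\ell_\beta$. Since $U'_\h(\mf h)\hat\ot U_\h(\mf b^+)\hat\ot U_\h(\mf n^-)$ is $\h$-adically complete, an element that is a unit modulo $\h$ is itself a unit; therefore the generators of $D^{12}_\nu$, and hence all of $D^{12}_\nu$, are mapped by $\theta$ into the units of $U'_\h(\mf h)\hat\ot U_\h(\mf b^+)\hat\ot U_\h(\mf n^-)$, a fortiori into the units of its localization at $\h$.

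Finally I would record that $\theta$ is injective: this follows from the injectivity of the Hopf morphism $U_q(\mf g)\to U_\h(\mf g)[\h^{-1}]$ recalled above (restricted to $U_q(\mf b^+)$ and $U_q(\mf n^-)$) and of the analogous map $U_q(\mf h)\to U'_\h(\mf h)[\h^{-1}]$ (the latter by linear independence over $\C[[\h]][\h^{-1}]$ of the exponentials $\exp(\sum_i n_i d_i u_i)$, $n\in\Z^r$, together with the embedding $\C(q)\hookrightarrow\C[[\h]][\h^{-1}]$), combined with a PBW-basis argument showing that $\theta$ carries a $\C(q)$-basis of the source to a $\C[[\h]][\h^{-1}]$-linearly independent family in the target. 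Combining this with the previous step, $\theta$ extends uniquely to an injective algebra morphism on $U_q(\mf h)\ot U_q(\mf b^+)\ot U_q(\mf n^-)[1/D^{12}_\nu]$, as claimed.

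The only mildly delicate point is the bookkeeping concealed in ``the computation for Lemma~\ref{lem:QToFormal} applies verbatim'': one must track which tensor slot $\Sh_\nu$ acts on and how the $\Z^r$-action and $\Delta_q$ interact with the passage to the formal algebra. But this is mechanical once~\eqref{eq:invSet} is available, and the entire substance of the lemma is the single observation that $\nu_\beta\neq 1$ pushes the pole of $1-\exp(\ell_\beta)$ off the origin, so that no localization of the formal Cartan is needed.
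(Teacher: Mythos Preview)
Your proposal is correct and follows essentially the same approach as the paper: identify the generators of $D^{12}_\nu$, observe that their images reduce modulo $\h$ to $1-\nu_\beta e^{\ell_\beta}$, and conclude invertibility from $\nu_\beta\neq 1$ together with $\h$-adic completeness and the universal property of the Ore localization. One tiny wording slip: the image is not ``the element displayed in~\eqref{eq:invSet} multiplied by $\nu_\beta$'' --- only the exponential summand acquires the factor $\nu_\beta$, not the leading $1$ --- but your subsequent reduction mod $\h$ shows you have the right formula in mind.
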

\begin{proof}
The set $D_{\nu}^{12}$ is multiplicatively generated by the
\[
\left\{ 1-\nu_{\beta}q^{-\frac i2 (\beta,\beta)} (k_{\beta}\ot k_{\beta}) q^{\langle z,\beta\rangle}, \beta \in R^+,i \in \N, z\in \Z^r\right\}
\]
whose image in $U'_{\h}(\mf h)\ot U_{\h}(\mf b^+) \ot U_{\h}(\mf n^-)[\h^{-1}]$ is
\[
\left\{ 1-\nu_{\beta}q^{-\frac i2 (\beta,\beta)} (e^{u_{\beta}}\ot e^{\h d_{\beta}h_{\beta}}) q^{\langle z,\beta\rangle}, \beta \in R^+,i \in \N, z\in \Z^r\right\}
\]
The reduction modulo $\h$ of each element of the latter set is of the form
\[
1-\nu_{\beta} e^{u_{\beta}} \ot 1\ot 1
\]
which is invertible in $\C[[u_1,\dots, u_r]] \tilde\ot U(\mf b^+) \ot U(\mf n^-)$ because $1-\nu_{\beta} \neq 0$. Hence the Lemma follows from the universal property of localization.

\end{proof}
\subsection{Construction of the dynamical twist}
Following~\cite{Enriquez2005,Enriquez2007a}, we will construct a dynamical twist using a quantum analog of the Shapovalov form, prove that it satisfies the ABRR equation~\cite{Arnaudon1998} and give a direct proof that it satisfies the mixed pentagon equation.

For $m\in \Z^r$, set $\mu(m)=\sum_{i,j} m_im_j d_i a_{ij}$. 
While $q^{\tth}$ (resp. $q^{\tth^{1,1}}$) does not belong to $U_q(\mf g)^{\ot 2}$ (resp. $U_q(\mf g)$) or to one of its completion, conjugation by these elements still make sense in the "rational" setup. Namely, there are automorphisms $\ul{\Ad(q^{\tth})}$ of $U_q(\mf g)^{\ot 2}$ and $\ul{\Ad(q^{\tth^{1,1}/2})}$ of $U_q(\mf g)$ uniquely defined by
\begin{align*}
\ul{\Ad(q^{\tth})}(e_j^{\pm} \ot 1)&= e_j^{\pm} \ot k_j^{\pm 2} & \ul{\Ad(q^{\tth})}(1 \ot e_j^{\pm})&=k_j^{\pm 2}\ot e_j^{\pm} \\
\ul{\Ad(q^{\tth})}(X)&=X & \forall X \in U_q(\mf h)^{\ot 2}, j\in \{1,\dots, r\}
\end{align*}
and for any $x_m \in U_q(\mf g)[m],m\in \Z^r$
\begin{align*}
\ul{ \Ad(q^{\tth^{1,1}/2})}(x_m) &=x_m  k^{ 2 \ul m}q^{\mu(m)}
\end{align*}
where $k^{\ul m}=\prod_j k_j^{m_j}$.
\subsubsection{Quantum Shapovalov form}
Let $H$ be the unique linear map $U_q(\mf g)\rightarrow U_q(\mf h)$ such that for all $x_{\pm} \in U_q(\mf n^{\pm})$, $x_0 \in U_q(\mf h)$
\[
H(x_-x_0x_+)=\epsilon(x_-)\epsilon(x_+)x_0
\]
It follows from the definition that $H$ is $U_q(\mf h)$-linear (on the left and on the right). Moreover, if $x_m$, $y_{n}$ are homogeneous elements of $U_q(\mf n^+)$ and $U_q(\mf n^-)$ of degrees $m$ and $n$ respectively, then $H(x_my_n) =0$ if $m \neq -n$. Thus, it induces a family of pairings
\[
\begin{array}{ccl}
U_q(\mf b^+)[m] \times U_q(\mf b^-)[-m] & \longrightarrow & U_q(\mf h) \\
(x_m,y_{-m}) & \longmapsto & H(x_my_{-m})
\end{array}
\]
which restrict to a family of pairings $U_q(\mf n^+)[m] \times U_q(\mf n^-)[-m] \rightarrow U_q(\mf h)$.

Let $K_m$ be the inverse element of this form, that is\footnote{If $V_1,\dots,V_k$ are vector spaces or modules, we abbreviate a general element $a=\sum_{i\in I} a_1(i)\ot \dots\ot a_k(i)$ of $V_1\ot\dots\ot V_k$ by $a=a^{[1]}\ot \dots \ot a^{[k]}$}: $ K_m=K_m^{[1]} \ot K_m^{[2]} \ot K_m^{[3]}$ is the unique element of $ U_q(\mf n^-)[-m]\ot U_q(\mf n^+)[m]\ot \C(q,k_i^{\pm})$ such that for all $x_{\pm} \in U_q(\mf b^{\pm})[\pm m]$,
\begin{equation}
H(x_+x_-)=  H(x_+K^{[1]}_m)K_m^{[3]}H(K_m^{[2]}x_-)
\end{equation}
Set 
\[
\bar \R=q^{-\tth/2}\R=\sum_{m\in \N^r} \bar \R_m=\sum_{m\in \N^r} \bar \R^{[1]}_{m} \ot \bar \R^{[2]}_{m} \in U_q(\mf n^+) \ot U_q(\mf n^-)
\]
and
\[
\bar \R^{-1}=\sum_{m\in \N^r} (\bar \R^{-1})_m=\sum_{m\in \N^r} r^{[1]}_{m} \ot r^{[2]}_{m}.
\]
The following result of De Concini and Kac will be crucial for this section:
\begin{thm}[\cite{De1990}]
Up to some invertible element of $\C(q)$ depending on the choice of basis of  $U_q(\mf n^+)[m]$ and $U_q(\mf n^-)[-m]$, the determinant of $H$ restricted to $U_q(\mf n^+)[m] \times U_q(\mf n^-)[-m]$ (viewed as a matrix with coefficient in $\C(q,k_1,\dots,k_r)$) is
\[
\det_m = \prod_{\beta \in R^+} \prod_{i\in \N} (1-k_{\beta}q^{-(\beta,\rho)-\frac i2(\beta,\beta)})^{P(\sum_jm_j\alpha_j-i\beta)}
\]
where $P(\beta)$ is the Kostant partition function, that is the number of ways one can write $\beta$ as a integral linear combination of the positive roots with non-negative coefficients.
\end{thm}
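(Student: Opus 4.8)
This is the quantum analogue of the classical Shapovalov determinant formula, and the plan is to reconstruct the argument of Shapovalov and Kac--Kazhdan in the quantum setting (the route taken by De Concini--Kac). First I would identify the matrix of $H$ restricted to $U_q(\mf n^+)[m]\times U_q(\mf n^-)[-m]$ with the Gram matrix of the contravariant (Shapovalov) form on the weight space $M(\lambda)_{\lambda-\sum_j m_j\alpha_j}$ of a ``generic'' Verma module $M(\lambda)$ over $U_q(\mf g)$, where $k_\beta$ plays the role of $q^{(\lambda,\beta)}$ and the form is built from the Chevalley antiautomorphism $\omega\colon e_i^{\pm}\mapsto e_i^{\mp}$. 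Under this identification $\det_m$ becomes a Laurent polynomial $D_m$ in $k_1,\dots,k_r$ over $\C(q)$, and the task is to factor it.

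The main step (Step~1, the zeros and their multiplicities) is to show that for every $\beta\in R^+$ and every $i$, the polynomial $D_m$ is divisible by $\bigl(1-k_\beta q^{-(\beta,\rho)-\frac i2(\beta,\beta)}\bigr)^{P(\sum_j m_j\alpha_j-i\beta)}$. For $\beta$ a simple root I would argue directly: on the hypersurface $k_\beta q^{-(\beta,\rho)-\frac i2(\beta,\beta)}=1$ an $\mf{sl}_2$-type computation with the Chevalley generators yields a singular vector of weight $\lambda-i\beta$ in $M(\lambda)$, hence a nonzero (necessarily injective) homomorphism $M(\lambda-i\beta)\to M(\lambda)$; its image is a proper submodule, therefore contained in the radical of the contravariant form, and its $(\lambda-\nu)$-weight space has dimension $P(\nu-i\beta)$, so the corank of the form on $M(\lambda)_{\lambda-\sum_j m_j\alpha_j}$ is at least $P(\sum_j m_j\alpha_j-i\beta)$, giving the divisibility. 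For a general $\beta\in R^+$ I would reduce to this case by restricting $M(\lambda)$ to the rank-one subalgebra generated by a Lusztig root vector $e_\beta^{\pm}$ together with $U_q(\mf h)$ (a copy of $U_{q_\beta}(\mf{sl}_2)$), or equivalently by invoking the quantum Kac--Kazhdan reducibility criterion for $M(\lambda)$. Since distinct pairs $(\beta,i)$ produce pairwise non-associate factors, these divisibilities combine into divisibility of $D_m$ by the full product $\prod_{\beta\in R^+}\prod_i(1-k_\beta q^{-(\beta,\rho)-\frac i2(\beta,\beta)})^{P(\sum_j m_j\alpha_j-i\beta)}$.

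It then remains (Step~2, the degree) to count degrees: a computation of the top term of $D_m$ in the variables $k_i$ from the quantum PBW basis of $U_q(\mf n^{\pm})[m]$, exactly as in the classical case, should give $\deg D_m=\sum_{\beta\in R^+}\sum_i P(\sum_j m_j\alpha_j-i\beta)$, which is precisely the total degree of the product above. Finally (Step~3), since $D_m$ is divisible by that product and has the same degree, the quotient is an element of $\C(q)$ times a Laurent monomial in the $k_i$, and comparing leading PBW diagonal terms (each pairs with itself up to a unit of $\C(q)$) forces the monomial to be trivial; hence $\det_m$ equals the stated product up to an invertible element of $\C(q)$ absorbing the normalisation of the chosen bases. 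The hard part is Step~1 for non-simple roots: pinning down the exact exponent $P(\sum_j m_j\alpha_j-i\beta)$ requires either a delicate rank-one reduction --- the subalgebra attached to $\beta$ is not a ``standard'' subalgebra in the quantum setting, so one must pass through Lusztig's braid symmetries or a quantum Weyl group element --- or the full quantum Kac--Kazhdan analysis of when $M(\lambda)$ is reducible; the degree bookkeeping of Step~2, though elementary, also needs care with the $\Z^r$-grading.
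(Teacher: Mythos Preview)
The paper does not prove this theorem at all: it is stated with the attribution \cite{De1990} and used as a black box (the very next sentence begins ``As a consequence, $K_m$ actually belongs to\dots''). So there is no proof in the paper to compare your proposal against.

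That said, your sketch is a faithful outline of the De~Concini--Kac argument it cites: identify $H$ with the Shapovalov/contravariant form on weight spaces of a generic Verma module, locate the zeros of the determinant via singular vectors (the quantum Kac--Kazhdan reducibility criterion), and match degrees using the PBW basis. Your assessment of where the real work lies --- the non-simple-root case via Lusztig's braid group action or rank-one reduction --- is accurate. For the purposes of this paper, however, none of this is needed; the result is simply quoted.
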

As a consequence, $K_m$ actually belongs to
 \[
U_q(\mf n^-)\ot U_q(\mf n^+)\ot U_q(\mf h)[(1-k_{\beta}q^{-(\beta,\rho)-\frac i2(\beta,\beta)})^{-1},\beta \in R^+,i\in\N].
\]
\begin{prop}\label{prop:ABRR-K}
The elements $\{K_m, m\in \N^r\}$ satisfy the following relation:
\begin{multline}\label{eq:ABRR-K}
K_{m} =\sum_{m'+m''=m} (\bar \R^{[2]}_{m'} \ot 1\ot 1) (S^2\ot \id\ot \id)( K_{m''}) (1\ot \bar \R^{[1]}_{m'} \ot 1)\\ (1\ot 1\ot k^{-2\ul{m''}} q^{\mu(m'')} k^{-\ul{m'}}  q^{\sum_{i,j} d_im'_im_j''a_{ij}})
\end{multline}
\end{prop}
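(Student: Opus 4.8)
The plan is to derive~\eqref{eq:ABRR-K} from the characterization of $K_m$ as the inverse of the Shapovalov pairing, together with a companion recursion for the Shapovalov form $H$ itself. Recall that $K_m$ is the unique element of $U_q(\mf n^-)[-m]\ot U_q(\mf n^+)[m]\ot U_q(\mf h)[(1-k_\beta q^{-(\beta,\rho)-\frac i2(\beta,\beta)})^{-1},\beta\in R^+,i\in\N]$ satisfying $H(x_+x_-)=H(x_+K_m^{[1]})\,K_m^{[3]}\,H(K_m^{[2]}x_-)$ for all $x_\pm\in U_q(\mf b^\pm)[\pm m]$; the De~Concini--Kac determinant formula quoted above is exactly what guarantees the existence and uniqueness of $K_m$ together with the precise list of denominators one must invert. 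Since $H$ is $U_q(\mf h)$-bilinear and annihilates any element with a nontrivial $\mf n^-$-part on the far left (or $\mf n^+$-part on the far right), this identity pins $K_m$ down completely. It therefore suffices to show that the right-hand side $\tilde K_m$ of~\eqref{eq:ABRR-K} — read as a recursion in $m$, after isolating the $m'=0$ summand and inverting the resulting operator on the third tensor leg — satisfies the same inverse-form identity, whence $\tilde K_m=K_m$ by uniqueness.

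The key input is thus a recursion for $H$. I would obtain it by writing $x_\pm$ in terms of Chevalley generators and commuting a factor $e_i^{\mp}$ across $x_\pm$ using the relations $[e_i^+,e_j^-]=\delta_{ij}(k_i^{d_i}-k_i^{-d_i})/(q^{d_i}-q^{-d_i})$ and $k_j e_i^{\pm}k_j^{-1}=q^{\pm a_{ji}}e_i^{\pm}$, exploiting that $H$ kills off-diagonal bidegrees. Organising this, $H(x_+x_-)$ becomes a sum over splittings $m=m'+m''$ of a term built from $H$ applied to the $m''$-components of $x_\pm$, the homogeneous components $\bar\R^{[1]}_{m'},\bar\R^{[2]}_{m'}$ of the reduced $R$-matrix, and explicit Cartan factors; equivalently, this is the statement that $\bar\R$ solves a linear ABRR-type equation and that $H$ is expressed through the Drinfeld pairing dual to $\bar\R$. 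The Cartan factors $k^{-2\ul{m''}}q^{\mu(m'')}$ and $k^{-\ul{m'}}q^{\sum_{i,j}d_im'_im''_ja_{ij}}$ are precisely what the automorphisms $\ul{\Ad(q^{\tth})}$ and $\ul{\Ad(q^{\tth^{1,1}/2})}$ — introduced above for exactly this purpose — contribute when $\bar\R$ and $H$ are pushed past one another, and the $S^2$ on the first leg records the antipode entering through $(S\ot\id)(\R)=\R^{-1}$. Substituting the $H$-recursion into the inverse-form identity and using $U_q(\mf h)$-bilinearity of $H$ to strip $x_+$ and $x_-$ off leg by leg then reproduces~\eqref{eq:ABRR-K}.

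I expect the main obstacle to be bookkeeping rather than structure: keeping consistent conventions for $S^2$ (conjugation by $k_{2\rho}$), for the two $q^{\tth}$-conjugation automorphisms, and for the weight shifts incurred when homogeneous components of $\bar\R$ are moved across the tensor legs, so that the exponents $\mu(m'')$, $\sum_{i,j}d_im'_im''_ja_{ij}$ and the powers of the $k_i$ come out exactly as stated. A second point that genuinely needs checking — not merely assuming — is that isolating the $m'=0$ term and solving for $K_m$ is legitimate: one must verify that $1$ minus the third-leg shift operator $X\mapsto (S^2\ot\id\ot\id)(X)(1\ot1\ot k^{-2\ul m}q^{\mu(m)})$ on bidegree $m$ is invertible in the localized Cartan algebra. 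This should again follow from the De~Concini--Kac denominators, since those are exactly the eigenvalues whose non-vanishing is in question, but it is the one place where the localization is used essentially rather than cosmetically.
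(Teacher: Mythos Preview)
Your approach differs genuinely from the paper's. The paper does not derive a recursion for $H$ by commuting Chevalley generators; instead it invokes Drinfeld's element $u=\sum_m r^{[1]}_m q^{\tth^{1,1}/2}S^{-1}(r^{[2]}_m)$ and the fact that $q^{\rho}S^{-1}(u)$ is central in $U_{\h}(\mf g)$. One then computes $H(x_+S^{-1}(u)x_-)$ in two ways: first, using centrality to slide $q^{\rho}S^{-1}(u)$ past $x_-$ and then projecting (only the Cartan head $q^{\tth^{1,1}/2}$ of $S^{-1}(u)$ survives $H$), which produces a single insertion of $K_m$; second, by expanding $S^{-1}(u)$ term by term and inserting $K_{m''}$ between the two $\bar\R_{m'}$-legs. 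Equating the two expressions and invoking uniqueness of $K_m$ gives~\eqref{eq:ABRR-K} directly. The $S^2$ on the first leg arises because $q^{\rho}(\cdot)q^{-\rho}=S^2$ on $U_{\h}(\mf n^-)$, and the Cartan exponents $\mu(m'')$, $\sum_{i,j}d_im'_im''_ja_{ij}$ drop out of an explicit computation of $[\tth^{1,1}/2,x_m]$ together with the identity $(S^{-1}\ot\id)(\R^{-1})=\R$ relating $r_m$ and $\bar\R_m$.

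Your route---derive a recursion for $H$ and pass to the inverse---ends at the same uniqueness argument, but the crucial middle step is underspecified. Commuting a single $e_i^{\mp}$ across $x_{\pm}$ yields a recursion that peels off one simple root at a time, whereas~\eqref{eq:ABRR-K} is indexed by arbitrary $m'\in\N^r$ with the full $\bar\R_{m'}$ appearing. You gesture at the bridge by saying ``equivalently, this is the statement that $\bar\R$ solves a linear ABRR-type equation and that $H$ is expressed through the Drinfeld pairing dual to $\bar\R$'', but that equivalence is precisely what needs to be established and is not a one-line remark. The paper's Drinfeld-element trick sidesteps this entirely: because $u$ already packages all of $\bar\R$ into a single (near-)central element, the sum over all $m'$ appears in one stroke rather than by iteration. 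Your concern about the invertibility of the $m'=0$ operator is also moot in the paper's argument, since it never solves a recursion---it simply matches two closed expressions for $H(x_+S^{-1}(u)x_-)$.
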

\begin{proof}
In the following, we identify $K_m$ with its image through the injective algebra morphism
\begin{multline*}
U_q(\mf n^-)\ot U_q(\mf n^+)\ot U_q(\mf h)[(1-k_{\beta}q^{-(\beta,\rho)-\frac i2(\beta,\beta)})^{-1},\beta \in R^+,i\in\N] \\
 \rightarrow U_{\h}(\mf n^-) \hat\ot U_{\h}(\mf n^+) \tilde\ot U'_{\h,loc}(\mf h)[\h^{-1}]
\end{multline*}
the existence of which follows as in Lemma~\ref{lem:QToFormal} from the fact that $1-k_{\beta}q^{-(\beta,\rho)-\frac i2(\beta,\beta)}\mapsto 1-e^{\ell_{\beta}}+O(\h)$.

By construction there are commuting squares
\begin{center}
 \begin{tikzpicture}
\matrix (m) [matrix of math nodes, row sep=3em,
column sep=8em, text height=1.5ex, text depth=0.25ex]
{ U_q(\mf g)& U_q(\mf g)  \\
 U_{\h}(\mf g)[\h^{-1}]&   U_{\h}(\mf g)[\h^{-1}]\\
};
\path[->,font=\scriptsize]
(m-1-1) edge node[auto] {$\ul{\Ad(q^{\tth^{1,1}})}$} (m-1-2)
(m-2-1) edge node[below] {$\Ad(e^{\h\tth^{1,1}})$} (m-2-2)
(m-1-1) edge node[left] {} (m-2-1)
(m-1-2) edge node[right] {} (m-2-2)
;
\end{tikzpicture}
\end{center}
and
\begin{center}
 \begin{tikzpicture}
\matrix (m) [matrix of math nodes, row sep=3em,
column sep=8em, text height=1.5ex, text depth=0.25ex]
{ U_q(\mf g)^{\ot 2}& U_q(\mf g)^{\ot 2}  \\
 U_{\h}(\mf g)^{\hat\ot 2}[\h^{-1}]&   U_{\h}(\mf g)^{\hat\ot 2}[\h^{-1}]\\
};
\path[->,font=\scriptsize]
(m-1-1) edge node[auto] {$\ul{\Ad(q^{\tth})}$} (m-1-2)
(m-2-1) edge node[below] {$\Ad(e^{\h\tth})$} (m-2-2)
(m-1-1) edge node[left] {} (m-2-1)
(m-1-2) edge node[right] {} (m-2-2)
;
\end{tikzpicture}
\end{center}

Set $u=\sum_{m} r^{[1]}_{m}q^{\tth^{1,1}/2}S^{-1}(r^{[2]}_{m})$, according to~\cite{Drinfeld1989b} $q^{\rho}S^{-1}(u)$ is a central element of $U_{\h}(\mf g)$. Let $x_{\pm} \in U_{\h}(\mf n^{\pm})$, we compute $H(x_+S^{-1}(u)x_-)$ in two different ways.
On the one hand, 
\begin{align*}
 H(x_+S^{-1}(u)x_-)&=H(x_+q^{-\rho}x_-q^{\rho}S^{-1}(u))\\
&=H(x_+q^{-\rho}x_-q^{\rho}q^{\tth^{1,1}/2})
\end{align*}
because $S^{-1}(u)$ is of the form $q^{\tth^{1,1}/2}+U_{\h}(\mf b^-)[<0] U_{\h}(\mf b^+)[>0]$. Thus,
\begin{align*}
H(x_+S^{-1}(u)x_-)&=\sum_{m} H(x_+q^{-\rho}K_{m}^{[1]})  K^{[3]}_{m} H(K^{[2]}_{m}x_-)q^{\rho}q^{\tth^{1,1}/2}\\
&=\sum_{m} H(x_+q^{-\rho}K_{m}^{[1]}q^{\rho})  K^{[3]}_{m} q^{\tth^{1,1}/2}H(K^{[2]}_{m}x_-)
\end{align*}
On the other hand, 
\begin{align*}
  H(x_+S^{-1}(u)x_-)&=\sum_{m'} H(x_+ S^{-2}(r^{[2]}_{m'})q^{\tth^{1,1}/2}S^{-1}(r^{[1]}_{m'})x_-)\\
&=\sum_{m',m''} H(x_+ S^{-2}(r^{[2]}_{m'})q^{\tth^{1,1}/2}K^{[1]}_{m''})K^{[3]}_{m''}H(K^{[2]}_{m''}S^{-1}(r^{[1]}_{m'})x_-)
\end{align*}
Recall that $(S^{-1} \ot \id)(\R^{-1})=\R$, thus
\begin{align*}
 (S^{-1} \ot \id)(\bar \R^{-1}q^{-\tth/2}) &= q^{\tth/2}\bar \R \\
\sum_{m} (1 \ot r^{[2]}_{m})q^{\tth/2} (S^{-1}(r^{[1]}_{m})\ot 1) &= q^{\tth/2} \sum_{m} \bar \R^{[1]}_{m} \ot \bar \R^{[2]}_{m}\\
S^{-1}(r^{[1]}_{m}) \ot r^{[2]}_{m} &= q^{-\ul{dmh}} \bar \R^{[1]}_{m} \ot \bar \R^{[2]}_{m}
\end{align*}
Under the normalization chosen at the beginning of this section, the following relation hold:
\begin{align*}
[t_{\mf h}/2, e_j^{\pm}\ot 1]&= \pm e_j^{\pm} \ot \sum_i \alpha_j(\check h_i)h_i\\
&= \pm e_j^{\pm} \ot d_jh_j
\end{align*}
\begin{lem}
Let $x_{m}^{\pm}$ be an homogeneous element of degree $\pm m$ in $U_{\h}(\mf n^{\pm})$, then
\begin{align*}
[\tth^{1,1}/2,x_{m}]&=x_{m} \sum_i m_id_i(\pm 2h_i + \sum_j m_j a_{ij} )\\
&=x_m (\pm 2 \ul{mdh}+\mu(m))
\end{align*}
\end{lem}
\begin{proof}
It follows from a direct computation:
\begin{align*}
\sum_i h_i\check h_i x_{m}^{\pm} &= \sum_i h_ix_{m}^{\pm}(\check h_i \pm \sum_j m_j\alpha_j(\check h_i))\\
&= \sum_i h_ix_{m}^{\pm}(\check h_i \pm m_id_i))\\
&= \sum_i x_{m}^{\pm}(h_i \pm \sum_j m_j\alpha_j(h_i))(\check h_i \pm m_id_i)\\
&= \sum_i x_{m}^{\pm}h_i\check h_i + \sum_i \pm (m_i d_i h_i + \check h_i\sum_j m_j\alpha_j(h_i)) + \sum_i (m_id_i\sum_j m_ja_{ij})\\
&= \sum_i x_{m}^{\pm}h_i\check h_i + \sum_i m_id_i (\pm 2 h_i  + \sum_j m_ja_{ij})
\end{align*}
\end{proof}
Together with the fact that for any element $x \in U_{\h}(\mf g)$, $S^{-2}(x)=\Ad(q^{-\rho})(x)$, it implies that
\begin{flalign*}
H(x_+S^{-1}(u)x_-)&=\sum_{m',m''} H(x_+ q^{-\rho}\bar \R^{[2]}_{m'}q^{\rho}q^{\tth^{1,1}/2}K^{[1]}_{m''})K^{[3]}_{m''}H(K^{[2]}_{m''}q^{-\ul{dm'h}}\bar \R^{[1]}_{m'}x_-)\\
&=\sum_{m',m''} H(x_+ q^{-\rho}\bar \R^{[2]}_{m'}q^{\rho}K^{[1]}_{m''})K^{[3]}_{m''}q^{\tth^{1,1}/2}q^{-2\ul{dm''h}} q^{\mu(m'')} \\
&\hspace{10em} \times  q^{-\ul{dm'h}}  q^{\sum_{i,j} d_im'_im_j''a_{ij}}H(K^{[2]}_{m''}\bar \R^{[1]}_{m'}x_-)
\end{flalign*}
By the uniqueness of $K_{m}$, it follows that
\begin{multline*}
K_{m}^{[1]} \ot K^{[2]}_{m} \ot K^{[3]}_{m} =\sum_{m'+m''=m} \bar \R^{[2]}_{m'} q^{\rho} K^{[1]}_{m''} q^{-\rho}  \ot K^{[2]}_{m''} \bar \R^{[1]}_{m'} \\ \ot K^{[3]}_{m''}q^{-2\ul{dm''h}} q^{\mu(m'')} q^{-\ul{dm'h}}  q^{\sum_{i,j} d_im'_im_j''a_{ij}}
\end{multline*}
\end{proof}
\subsubsection{The dynamical twist}

Let $\rho^*\in \mf h^*$ be the dual element of $\rho$ with respect to the Killing form. Then, 
\begin{equation}\label{eq:rho-t}
(\frac12\rho^* \ot \id)(\tth)=\rho.
\end{equation}
Define an algebra automorphism $\sh_{\rho^*}:U_q(\mf h) \rightarrow U_q(\mf h)$ by:
\[
\sh_{\rho^*}(k_i^{\pm 1})=q^{\pm\frac12 \rho^*(d_ih_i)}k_i^{\pm 1}
\]
For any $m \in \N^r$, let
\[
J_m= S(K^{[3]^{(1)}}_{m}) \ot  S(K^{[2]}_{m})S(K_{m}^{[3]^{(2)}})\ot K^{[1]}_{m}
\]
The map $\sh_{\rho^*}$ induces an isomorphism
\begin{multline*}
\sh_{\rho^*} \ot \id^{\ot 2}: U_q(\mf h) \ot U_q(\mf b^+) \ot U_q(\mf n^-)[1/(\sh_{\rho}^{-1}\ot \id)(D^{12} )] \\ \rightarrow U_q(\mf h) \ot U_q(\mf b^+) \ot U_q(\mf n^-)[1/D^{12}]
\end{multline*}

Thus define\footnote{According to~\cite{Enriquez2005,Enriquez2007a}, the map $a\ot b\ot c\mapsto S(c^{(1)}) \ot S(b)S(c^{(2)})\ot a$ applied to $K$ already leads to a dynamical twist. The two other transformations are needed here because we want a specific form of the ABRR equation.}
\[
\Psi_{q,m}=(\sh_{\rho^*}\ot \id^{\ot 2})\circ (\ul{\Ad(q^{-\tth^{1,2}/2})}\ot \id)(J_m) 
\]
\begin{thm}\label{thm:ABRR-Psi}
The elements $\{\Psi_{q,m}, m\in \N^r\}$ are $\mf h$-invariant elements of $U_q(\mf h)\ot U_q(\mf b^+)\ot U_q(\mf n^-)[1/D^{12}]$ and satisfy the ABRR equation
\begin{equation}
\Psi_{q,m}=\sum_{m'+m''=m}(\bar \R^{-1})_{m'}^{2,3} \ul{\Ad(q^{\tth^{1,2} + \tth^{2,2}/2})} (\Psi_{q,m''})
\end{equation}
\end{thm}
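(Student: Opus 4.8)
The plan is to derive the ABRR equation for $\{\Psi_{q,m}\}$ directly from the relation of Proposition~\ref{prop:ABRR-K}, by pushing the latter through the three operations used to build $\Psi_{q,m}$ out of $K_m$: the linear map $T: a\ot b\ot c\mapsto S(c^{(1)})\ot S(b)S(c^{(2)})\ot a$ sending $K_m$ to $J_m$, then $\ul{\Ad(q^{-\tth^{1,2}/2})}\ot\id$, then $\sh_{\rho^*}\ot\id^{\ot 2}$. The two remaining claims — that $\Psi_{q,m}$ lies in $U_q(\mf h)\ot U_q(\mf b^+)\ot U_q(\mf n^-)[1/D^{12}]$ and is $\mf h$-invariant — I would settle first, as they are essentially matters of bookkeeping.

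For membership, the De Concini--Kac formula for $\det_m$ tells us that the only denominators occurring in $K_m$ are the $1-k_\beta q^{-(\beta,\rho)-\frac i2(\beta,\beta)}$, $\beta\in R^+$, $i\in\N$, all sitting in the $U_q(\mf h)$-leg. Applying $S$ to $U_q(\mf h)$ inverts $k_\beta$, applying $\Delta_q$ turns $1-k_\beta q^\gamma$ into $1-(k_\beta\ot k_\beta)q^\gamma$, and since $J_m$ routes this leg into the first two tensor factors via $c\mapsto S(c^{(1)})\ot S(c^{(2)})$, the denominators of $J_m$ lie in the multiplicative set generated by $\Delta_q$'s of such elements. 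Conjugation by $q^{-\tth^{1,2}/2}$ then multiplies the second leg of each generator by a power of the $k_j$, i.e.\ by a member of the $(\{0\}\times\Z^r)$-orbit already present in $D^{12}$, while $\sh_{\rho^*}$ is designed to remove the $-(\beta,\rho)$ from the exponent — this uses $(\tfrac12\rho^*\ot\id)(\tth)=\rho$ of~\eqref{eq:rho-t} together with $\alpha_j(\rho)=2d_j$. Thus the denominators of $\Psi_{q,m}$ lie exactly in $D^{12}$, which is in effect what Lemma~\ref{lem:morphShift} and its proof already record. The $\mf h$-invariance is immediate: $K_m^{[1]}\ot K_m^{[2]}$ has total $\Z^r$-degree zero (degrees $-m$ and $m$), $S$ preserves the $\Z^r$-degree, and every other factor entering $J_m$ and the subsequent conjugations is a degree-zero element of $U_q(\mf h)$; hence $\Psi_{q,m}$ is of total $\Z^r$-degree zero, which is the meaning of $\mf h$-invariance here.

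For the ABRR equation, apply $T$, then $\ul{\Ad(q^{-\tth^{1,2}/2})}\ot\id$, then $\sh_{\rho^*}\ot\id^{\ot 2}$ to both sides of~\eqref{eq:ABRR-K}; the left-hand side becomes $\Psi_{q,m}$ by definition. On the right-hand side one reads off from~\eqref{eq:ABRR-K} the legs of each summand, $K_m^{[1]}=\bar\R^{[2]}_{m'}S^2(K_{m''}^{[1]})$, $K_m^{[2]}=K_{m''}^{[2]}\bar\R^{[1]}_{m'}$ and $K_m^{[3]}=K_{m''}^{[3]}\,g\,c$ with $g=k^{-2\ul{m''}}k^{-\ul{m'}}$ group-like and $c=q^{\mu(m'')}q^{\sum_{i,j}d_im_i'm_j''a_{ij}}$ scalar, feeds them into the definition of $J_m$, and then uses that $S$ is an algebra anti-automorphism, that $\Delta_q(g)=g\ot g$ and $S(g)=g^{-1}$, that $S^2=\Ad(q^\rho)$, and the identity $S^{-1}(r^{[1]}_{m'})\ot r^{[2]}_{m'}=q^{-\ul{dm'h}}\bar\R^{[1]}_{m'}\ot\bar\R^{[2]}_{m'}$ already established in the proof of Proposition~\ref{prop:ABRR-K}. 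Transporting the $S$'s and $\bar\R$-legs into place produces, in the third tensor factor, the prefactor $(\bar\R^{-1})_{m'}^{2,3}$, and leaves a residue of $k$-powers and $q$-scalars that, once $g^{-1}$ has been slid past the root vectors $S(K_{m''}^{[2]})$ and $\bar\R^{[2]}_{m'}$ using~\eqref{eq:comrel} and $\sh_{\rho^*}$ has been applied, must be identified with $\ul{\Ad(q^{\tth^{1,2}+\tth^{2,2}/2})}$ applied to $\Psi_{q,m''}$. In this rearrangement the $\tth^{2,2}/2$-part arises from $S^2$ on $K_{m''}^{[1]}$ together with $q^{\mu(m'')}$, and the $\tth^{1,2}$-part from $g$, the cross-scalar $q^{\sum d_im_i'm_j''a_{ij}}$ and the conjugations already inside $\Psi_{q,m''}$.

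The main obstacle is exactly this last reassembly: a finite but intricate check that the accumulated group-like factors (the $k^{2\ul{m''}}$, $k^{\ul{m'}}$ produced by $S$ and by $\ul{\Ad(q^{-\tth^{1,2}/2})}$) and the accumulated $q$-scalars ($\mu(m'')$, $\sum d_im_i'm_j''a_{ij}$, the contributions of~\eqref{eq:comrel} and of $\sh_{\rho^*}$) recombine into precisely $\ul{\Ad(q^{\tth^{1,2}+\tth^{2,2}/2})}$, with all half-integer coefficients and signs matching. Since $\ul{\Ad(q^{\tth})}(1\ot e_j^\pm)=k_j^{\pm 2}\ot e_j^\pm$ and $\ul{\Ad(q^{\tth^{1,1}/2})}(x_m)=x_mk^{2\ul m}q^{\mu(m)}$ on homogeneous elements, one has a completely explicit dictionary between these $\Ad$-operators and multiplication by $k$-powers and $q$-scalars indexed by $\Z^r$-degrees, so the verification reduces to matching two such dictionaries degree by degree; I expect no conceptual difficulty beyond organizing the computation, with~\eqref{eq:rho-t} the one non-formal ingredient that makes the $\rho^*$-shift line up with the $\tth$-conjugations. (Alternatively the whole computation can be run after transporting $K_m$ and $J_m$ along the injections of Lemma~\ref{lem:QToFormal} into the formal algebra $U'_{\h,loc}(\mf h)\tilde\ot U_{\h}(\mf g)^{\hat\ot 2}[\h^{-1}]$, where $S^2=\Ad(q^\rho)$ and the $\tth$-conjugations become honest inner automorphisms, which may make the bookkeeping cleaner.)
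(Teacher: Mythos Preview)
Your proposal is correct and follows essentially the same route as the paper: push relation~\eqref{eq:ABRR-K} through the three operations $K_m\mapsto J_m\mapsto (\ul{\Ad(q^{-\tth^{1,2}/2})}\ot\id)(J_m)\mapsto \Psi_{q,m}$, using $S^2=\Ad(q^{\rho})$, the identity $S(\bar\R^{[1]}_{m'})q^{\ul{dm'h}}\ot\bar\R^{[2]}_{m'}=r^{[1]}_{m'}\ot r^{[2]}_{m'}$, and~\eqref{eq:rho-t} to absorb the $\rho$-contribution into $\sh_{\rho^*}$. The paper in fact adopts your ``alternative'': it transports everything into $U'_{\h,loc}(\mf h)\tilde\ot U_{\h}(\mf b^+)\hat\ot U_{\h}(\mf n^-)[\h^{-1}]$ via Lemma~\ref{lem:QToFormal} before computing, so that the $\tth$-conjugations are honest inner automorphisms and the reassembly you flag as the main obstacle becomes a straightforward exponential manipulation; one minor correction is that the membership claim is not what Lemma~\ref{lem:morphShift} records (that lemma concerns the shifted set $D^{12}_\nu$), but rather the direct observation that $(\sh_{\rho^*}\ot\id)\circ\ul{\Ad(q^{-\tth^{1,2}/2})}(\det_m^{12})\in D^{12}$.
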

\begin{proof}
The first statement is true because 
\[
(\sh_{\rho^*} \ot \id)\circ \ul{\Ad(q^{-\tth^{1,2}/2})} (\det_m^{12}) \in D^{12}
\]
by construction.
Thus, $\Psi_{q,m}$ can be identified with an element of $U'_{\h,loc}(\mf h) \tilde \ot U_{\h}(\mf b^+)\hat\ot U_{\h}(\mf n^-)[\h^{-1}]$ thanks to Lemma~\ref{lem:QToFormal}, and we will prove the above equation in the latter algebra.
Set $J_m= J^{[1]}_{m} \ot J^{[2]}_{m} \ot J^{[3]}_{m}$. Applying the transformation from $K_m$ to $J_m$ to~\eqref{eq:ABRR-K}, one has:
\begin{multline*}
 J^{[1]}_{m} \ot J^{[2]}_{m} \ot J^{[3]}_{m}=\sum_{m'+m''=m} J^{[1]}_{m} q^{2\ul{dm''h}} q^{\ul{dm'h}}q^{\mu(m'')} q^{\sum_{i,j} d_im'_im_j''a_{ij}}\\ \ot S(\bar \R^{[1]}_{m'})J^{[2]}_{m''} q^{2\ul{dm''h}} q^{\ul{dm'h}}\ot  \bar \R^{[2]}_{m'} q^{\rho}J^{[3]}_{m''} q^{-\rho}
\end{multline*}
that is
\[
 J^{[1]}_{m} \ot J^{[2]}_{m} \ot J^{[3]}_{m}=\sum_{m'+m''=m} J^{[1]}_{m} q^{2\ul{dm''h}} q^{\ul{dm'h}}q^{\mu(m'')} \ot S(\bar \R^{[1]}_{m'})q^{\ul{dm'h}} J^{[2]}_{m''} q^{2\ul{dm''h}} \ot  \bar \R^{[2]}_{m'} q^{\rho}J^{[3]}_{m''} q^{-\rho}
\]
As
\[
\sum_{m} q^{-\ul{dmh}} \bar \R^{[1]}_{m} \ot \bar \R^{[2]}_{m} = \sum_{m} S^{-1}(r^{[1]}_{m}) \ot r^{[2]}_{m}
\]
it follows that
\[
\sum_{m} S(\bar \R^{[1]}_{m}) q^{\ul{dmh}} \ot \bar \R^{[2]}_{m} = \sum_{m} r^{[1]}_{m} \ot r^{[2]}_{m}
\]
Hence,
\begin{align*}
 J^{[1]}_{m} \ot J^{[2]}_{m} \ot J^{[3]}_{m}&=\sum_{m'+m''=m} J^{[1]}_{m} q^{2\ul{dm''h}} q^{\ul{dm'h}}q^{\mu(m'')} \ot r^{[1]}_{m} J^{[2]}_{m''} q^{2\ul{dm''h}} \ot  r^{[2]}_{m'} q^{\rho}J^{[3]}_{m''} q^{-\rho}\\
&=\sum_{m'+m''=m} q^{\tth^{1,2}/2} (1\ot r^{[1]}_{m'}\ot r^{[2]}_{m'})   q^{-\tth^{1,2}/2} q^{\tth^{1,2}+\tth^{2,2}/2}\\ & \hspace{9em} \times (J^{[1]}_{m''} \ot J^{[2]}_{m''} \ot q^{\rho}J^{[3]}_{m''}q^{-\rho}) q^{-\tth^{1,2}-\tth^{2,2}/2} 
\end{align*}
Multiplying the two sides of the last line by $q^{\tth^{1,2}/2}$ on the right and using the $\mf h$-invariance of $J$, the following relation holds
\[
q^{-\tth^{1,2}/2} J_mq^{\tth^{1,2}/2} =\sum_{m=m'+m''}(\bar \R_{m'}^{-1})^{2,3}   q^{\tth^{1,2}+\tth^{2,2}/2-\rho^{(2)}} (q^{-\tth^{1,2}/2}J_{m''} q^{\tth^{1,2}/2} ) q^{-\tth^{1,2}-\tth^{2,2}/2+\rho^{(2)}} 
\]
that is, thanks to relation~\eqref{eq:rho-t},
\[
\Psi_{q,m}=\sum_{m=m'+m''}(\bar \R^{-1})_{m'}^{2,3}   q^{\tth^{1,2}+\tth^{2,2}/2}\Psi_{q,m''}  q^{-\tth^{1,2}-\tth^{2,2}/2} 
\]
as wanted.
\end{proof}

\begin{cor}\label{cor:convergence}
Let $\Psi_{\h,m}$ be the image of $\Psi_{q,m}$ in $U_{\h,loc}(\mf h)\tilde \ot U_{\h}(\mf b^+) \hat \ot U'_{\h}(\mf n^-)[\h^{-1}]$, the sum
\[
\Psi_{\h}=\sum_{m \in \N^r}\Psi_{\h,m}
\]
is convergent in the $\h$-adic topology and has nonnegative $\h$-adic valuation. In other words, $\Psi_{\h}\in U'_{\h,loc}(\mf h)\tilde \ot U_{\h}(\mf b^+) \hat \ot U_{\h}(\mf n^-)$.
\end{cor}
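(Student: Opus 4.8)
The plan is to turn the ABRR equation of Theorem~\ref{thm:ABRR-Psi} into a recursion on $|m|:=\sum_i m_i$ and to run an $\h$-adic valuation estimate along it. Since $(\bar\R^{-1})_0=1\ot 1$, isolating the $m'=0$ term rewrites that equation, in the localized formal algebra of Lemma~\ref{lem:QToFormal}, as $L_m(\Psi_{\h,m})=\mathrm{RHS}_m$ where
\[
\mathrm{RHS}_m=\sum_{\substack{m'+m''=m\\ m'\neq 0}}(\bar\R^{-1})_{m'}^{2,3}\;\ul{\Ad(q^{\tth^{1,2}+\tth^{2,2}/2})}(\Psi_{\h,m''}),\qquad L_m:=1-\ul{\Ad(q^{\tth^{1,2}+\tth^{2,2}/2})}.
\]
Here $\mathrm{RHS}_m$ involves only the $\Psi_{\h,m''}$ with $|m''|<|m|$ (as $m'\neq 0$ forces $m''\neq m$), built from them by left multiplication by $(\bar\R^{-1})_{m'}^{2,3}$, which is of positive $\h$-valuation when $m'\neq 0$, followed by the automorphism $\ul{\Ad(q^{\tth^{1,2}+\tth^{2,2}/2})}$, which preserves $\h$-valuation; and $L_m$ acts on the $\Z^r$-homogeneous component $V_m$ of tridegree $(0,m,-m)$ in which $\Psi_{\h,m}$ lives (by Theorem~\ref{thm:ABRR-Psi} it is $\mf h$-invariant of this tridegree). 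The base case is trivial: $K_0=1^{\ot 3}$, so $\Psi_{\h,0}=1$, of $\h$-valuation $0$. I would then build everything on two estimates.

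The first, which is the crux, is that $L_m$ \emph{preserves} $\h$-adic valuation on $V_m$ for $m\neq 0$, i.e. $v(L_m\xi)=v(\xi)$. Under the identifications of Lemma~\ref{lem:QToFormal} ($k_i\mapsto e^{\ell_{\alpha_i}}$ in the $U'_{\h,loc}(\mf h)$-slot, $k_i\mapsto e^{\h d_ih_i}=1+O(\h)$ in the $U_{\h}(\mf b^+)$-slot), the formulas defining $\ul{\Ad(q^{\tth})}$ and $\ul{\Ad(q^{\tth^{1,1}/2})}$ show that on $V_m$ the operator $\ul{\Ad(q^{\tth^{1,2}})}$ is multiplication in the first slot by $k^{2\ul m}=e^{2\ell_m}$ (with $\ell_m=\sum_i m_i d_i u_i$), while $\ul{\Ad(q^{\tth^{2,2}/2})}$ is right multiplication in the second slot by $k^{2\ul m}q^{\mu(m)}=1+O(\h)$. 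Hence $L_m$ reduces modulo $\h$, on $V_m/\h V_m$, to multiplication by $1-e^{2\ell_m}$ in the \emph{commutative} first factor; for $m\neq 0$ this is a nonzero element of the integral domain $\bigl(U'_{\h,loc}(\mf h)\tilde\ot U_{\h}(\mf b^+)\hat\ot U_{\h}(\mf n^-)\bigr)/\h$, hence a non-zero-divisor, so $L_m\bmod\h$ is injective, which gives $v(L_m\xi)=v(\xi)$ and in particular $v(\Psi_{\h,m})=v(\mathrm{RHS}_m)$. The hard part is exactly this: $1-e^{2\ell_m}$ need \emph{not} be a unit of $U'_{\h,loc}(\mf h)$ (it is, only when $m$ is proportional to a positive root), so one may use only that it is a non-zero-divisor; this suffices to keep the recursion from losing $\h$-valuation, and nothing more is needed here (that $\Psi_{q,m}$ stays in the $D^{12}$-localization, rather than a larger one, is already guaranteed by its construction from $K_m$).

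The second estimate is $v\bigl((\bar\R^{-1})_{m'}\bigr)\geq |m'|/h$ with $h=\max_{\beta\in R^+}\operatorname{ht}(\beta)$. This comes from the multiplicative ($q$-exponential) product formula for $\R$: writing $\bar\R^{\pm1}$ as an ordered product over $R^+$ of $q$-exponentials of $(q_\beta-q_\beta^{-1})(e_\beta\ot f_\beta)$, and noting $q_\beta-q_\beta^{-1}=2\h d_\beta+O(\h^3)$ has $\h$-valuation $1$ while $e_\beta$, $f_\beta$ and the $q$-factorials have valuation $0$, the degree-$m'$ component is a sum of terms of valuation $\sum_\beta k_\beta$ over decompositions $m'=\sum_\beta k_\beta\beta$, and $\sum_\beta k_\beta\geq |m'|/h$ because $|m'|=\sum_\beta k_\beta\operatorname{ht}(\beta)$.

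Combining the two, with $v(\Psi_{\h,0})=0$, induction on $|m|$ yields
\[
v(\Psi_{\h,m})=v(\mathrm{RHS}_m)\ \geq\ \min_{\substack{m'+m''=m\\ m'\neq 0}}\Bigl(v\bigl((\bar\R^{-1})_{m'}\bigr)+v(\Psi_{\h,m''})\Bigr)\ \geq\ \min_{m'+m''=m}\Bigl(\tfrac{|m'|}{h}+\tfrac{|m''|}{h}\Bigr)=\tfrac{|m|}{h}.
\]
So $v(\Psi_{\h,m})\geq |m|/h$ for all $m$: each power of $\h$ receives contributions from only finitely many $m$, whence $\sum_m\Psi_{\h,m}$ converges $\h$-adically; and since the $\Psi_{\h,m}$ have pairwise distinct $\Z^r$-degrees there is no cancellation, so $\Psi_{\h}$ has $\h$-adic valuation $\min_m v(\Psi_{\h,m})=0$. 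Thus $\Psi_{\h}\in U'_{\h,loc}(\mf h)\tilde\ot U_{\h}(\mf b^+)\hat\ot U_{\h}(\mf n^-)$, as claimed. The leftover checks — the precise action of $\ul{\Ad(q^{\tth^{1,2}+\tth^{2,2}/2})}$ on $V_m$ via Proposition~\ref{prop:ABRR-K}, and tracking everything through the injections of Lemma~\ref{lem:QToFormal} — are routine.
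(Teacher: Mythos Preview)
Your argument is correct and follows essentially the same route as the paper: recast the ABRR equation as a recursion in $|m|$, observe that $L_m=1-\ul{\Ad(q^{\tth^{1,2}+\tth^{2,2}/2})}$ preserves $\h$-adic valuation on the degree-$m$ component because its reduction modulo $\h$ is multiplication by the non-zero-divisor $1-e^{2\ell_m}$ in the first (commutative) slot, and induct. The one substantive difference is your estimate on $(\bar\R^{-1})_{m'}$: the paper asserts $v\bigl((\bar\R^{-1})_{m'}\bigr)\geq |m'|$ as ``well known'', while you derive the weaker $v\geq |m'|/h$ from the product formula. Your bound is in fact the sharp one---for a non-simple positive root $\alpha$ the component $\bar\R_\alpha=(q_\alpha-q_\alpha^{-1})\,e_\alpha\ot f_\alpha+O(\h^2)$ has valuation $1<\operatorname{ht}(\alpha)=|\alpha|$---so you are being more careful here; either bound of the form $v\geq c|m'|$ with $c>0$ suffices for the convergence and for the later uses of this corollary.
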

\begin{proof}
Set 
\[
A_{\h}= U'_{\h,loc}(\mf h)\tilde \ot U_{\h}(\mf b^+) \hat \ot U_{\h}(\mf n^-)
\]
We will show that for any $m \in \N^r$, $\Psi_{\h,m}\in \h^{|m|} A_{\h}$. This will be proved by induction on $\omega=|m|$. If $\omega=0$, this is true because $\Psi_{\h,0}=1$. Assume that this holds for any $m$ such that $|m|<\omega$. Let $m_0$ be such that $|m_0|=\omega$, according to the ABRR equation
\[
(\id-\Ad(q^{\tth^{1,2}+\frac12 \tth^{2,2}}))(\Psi_{\h,m_0})=\sum_{\substack{m'+m''=m_0\\(m',m'')\neq (0,0)}} (\bar\R^{-1})_{m'}^{2,3} \Ad(q^{\tth^{1,2}+\frac12 \tth^{2,2}})(\Psi_{\h,m''})
\]
that is
\begin{multline}\label{eq:hbarABRR}
\Psi_{\h,m_0}(1-e^{2\ul{dm_0u}}\ot q^{\mu(m_0)}q^{2\ul{dm_0h}}\ot 1)\\= \sum_{\substack{m'+m''=m_0\\(m',m'')\neq (0,0)}} (\bar\R^{-1})_{m'}^{2,3} \Psi_{\h,m''}(e^{2\ul{dm''u}}\ot q^{\mu(m'')}q^{2\ul{dm''h}}\ot 1)
\end{multline}
It is well known (see for instance~\cite{Chari1994}) that $(\bar\R^{-1})_{m'} \in \h^{|m'|}A_{\h}$, and by assumption $\Psi_{\h,m''} \in \h^{|m''|}A_{\h}$ implying that the right hand side of the above equation belongs to $\h^{|m_0|}A_{\h}$.

Let $v$ be the $\h$-adic valuation of $\Psi_{\h,m_0}$, that is $\h^{-v}\Psi_{\h,m_0} \in A_{\h}\backslash\h A_{\h}$. Assume that $v<|m_0|$ and let $\psi=(\h^{-v}\Psi_{\h,m_0} \mod \h)$ which is a non-zero element of $U(\mf b^+)\ot U(\mf n^-)[[u_1,\dots,u_r]][\frac1{\ell_{\beta}}]$, equation~\eqref{eq:hbarABRR} implies that
\[
\psi(1-e^{2\ul{dm_0u}}\ot q^{\mu(m_0)}q^{2\ul{dm_0h}}\ot 1)=\psi((1-e^{2\ul{dm_0u}}) \ot 1\ot 1) +O(\hbar) \in \h^{|m_0|-v}A_{\h}
\] 
meaning that 
\[
\psi((1-e^{2\ul{dm_0u}}) \ot 1\ot 1)=0 
\]
It leads to a contradiction because the linear map
\[
\begin{array}{rcl}
U(\mf b^+)\ot U(\mf n^-)[[u_1,\dots,u_r]][\frac1{\ell_{\beta}}]& \rightarrow &U(\mf b^+)\ot U(\mf n^-)[[u_1,\dots,u_r]][\frac1{\ell_{\beta}}] \\
x & \mapsto & x((1-e^{2\ul{dmu}}) \ot 1\ot 1)
\end{array}
\]
is injective for $m\neq 0$. Hence, $\Psi_{\h,m_0} \in \h^{|m_0|}A_{\h}$ as required.
\end{proof}
\subsubsection{The mixed pentagon equation}
\begin{thm}\label{thm:mixPentaSansShift}
The set $\{ \Psi_{q,m},m\in \N^r\}$ satisfies the mixed pentagon system of equations
\[
\forall m\in \N^r, \sum_{m=m'+m''}\Psi_{q,m'}^{1,2,34}\Psi_{q,m''}^{12,3,4}=\sum_{m=m'+m''}\Psi_{q,m'}^{1,23,4}\Psi_{q,m''}^{1,2,3}
\]
in $U_q(\mf h) \ot U_q(\mf b^+) \ot U_q(\mf g) \ot U_q(\mf n^-)[1/D^{12}, 1/D^{123}]$.
\end{thm}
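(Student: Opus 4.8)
The plan is to reduce the mixed pentagon equation for $\{\Psi_{q,m}\}$ to a quadratic identity among the elements $K_m$ — equivalently, among the matrix elements of the quantum Shapovalov pairing $H$ — and then to transport this identity through the two transformations relating $\{K_m\}$ to $\{\Psi_{q,m}\}$. Recall that $K=\sum_m K_m$ is determined by $H(x_+x_-)=\sum_m H(x_+K_m^{[1]})K_m^{[3]}H(K_m^{[2]}x_-)$ for $x_\pm\in U_q(\mf b^\pm)$. The Shapovalov pairing is compatible with the coproduct of $U_q(\mf g)$ — this is the algebraic shadow of the embedding $M_{\lambda+\mu}\hookrightarrow M_\lambda\ot M_\mu$ of Verma modules, under which the contravariant form of $M_{\lambda+\mu}$ is the restriction of the tensor-product form — and combining this compatibility with the associativity of the product in $U_q(\mf g)$ and with the commutation rule \eqref{eq:comrel} (which is the source of the $k$-type shifts) one computes a suitable iterated matrix element of $H$ in its two bracketings. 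Equating the two yields, for each $m$, an identity of the shape $\sum_{m'+m''=m}(\cdots K_{m'}\cdots K_{m''}\cdots)=\sum_{m'+m''=m}(\cdots K_{m'}\cdots K_{m''}\cdots)$, and applying the map $a\ot b\ot c\mapsto S(c^{(1)})\ot S(b)S(c^{(2)})\ot a$ turns it into the mixed pentagon system for $\{J_m\}$. That this map already produces a dynamical twist is, as recalled in the footnote after \cite{Enriquez2005,Enriquez2007a}, known; the point of running the computation is to keep the argument self-contained and, above all, to set up the next step.

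It then remains to transport the pentagon from $\{J_m\}$ to $\{\Psi_{q,m}\}$, where $\Psi_{q,m}=(\sh_{\rho^*}\ot\id^{\ot 2})\circ(\ul{\Ad(q^{-\tth^{1,2}/2})}\ot\id)(J_m)$. The operation $\ul{\Ad(q^{-\tth^{1,2}/2})}$ is a gauge transformation supported on the $U_q(\mf h)$- and $U_q(\mf b^+)$-legs; along the insertion-coproduct maps $1,2,34$; $12,3,4$; $1,23,4$; $1,2,3$ its iterates differ by conjugation by powers of $q^{\tth}$ placed between appropriate slots, and since $\tth\in S^2(\mf h)^{\mf h}$ and each $\Psi_{q,m}$ is $\mf h$-invariant (Theorem~\ref{thm:ABRR-Psi}), these conjugation factors cancel telescopically when the two sides of the equation are compared. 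The shift $\sh_{\rho^*}$ is not a coalgebra morphism, so it must be handled as a shift of the dynamical parameter — in the functional picture $J(\lambda)\mapsto J(\lambda+\frac12\rho^*)$, under which the cocycle is manifestly invariant — and then translated back to the algebraic statement. Throughout, the localization morphisms $X\mapsto X^{0,1,2},X^{0,12,3},X^{0,1,23},X^{01,2,3}$, together with the De Concini--Kac determinant formula (which, after the shift $\sh_{\rho^*}$, turns the denominators of $K_m$ into exactly $D^{12}$ and $D^{123}$), guarantee that every element occurring is well defined in $U_q(\mf h)\ot U_q(\mf b^+)\ot U_q(\mf g)\ot U_q(\mf n^-)[1/D^{12},1/D^{123}]$.

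The main obstacle is the combinatorial bookkeeping pervading both steps: tracking the $\Z^r$-grading shifts produced whenever a $U_q(\mf h)$-valued factor passes a homogeneous element, the appearances of the antipode $S$, and the precise matching of all of this against the leg patterns $1,2,34$, $12,3,4$, $1,23,4$, $1,2,3$ dictated by the coproduct of $U_q(\mf h)$. An alternative that sidesteps the transport step entirely is to prove the pentagon for $\{\Psi_{q,m}\}$ directly from the ABRR equation of Theorem~\ref{thm:ABRR-Psi}: iterating it gives a convergent (by Corollary~\ref{cor:convergence}) expression for $\Psi_q=\sum_m\Psi_{q,m}$ as an infinite product of $\ul{\Ad(q^{\tth^{1,2}+\tth^{2,2}/2})}$-translates of $(\bar\R^{-1})^{2,3}$, from which the mixed pentagon follows by repeated use of the quantum Yang--Baxter equation for $\bar\R$ together with \eqref{eq:comrel}; this merely trades one delicate reorganization for another, but stays entirely within the objects just constructed.
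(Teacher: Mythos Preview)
Your main line --- deriving a quadratic identity among the $K_m$ from the compatibility of the Shapovalov map $H$ with the coproduct (equivalently, from the embedding $M_{\lambda+\mu}\hookrightarrow M_\lambda\ot M_\mu$) and then transporting it through $K\to J\to\Psi$ --- is essentially the route of \cite{Enriquez2005,Enriquez2007a}, and it does work; the transport step is the delicate part, and the ``telescopic cancellation'' you invoke can be made precise but is not free. The paper, however, takes a different and more economical route. It passes to the $\h$-adic completion via Lemma~\ref{lem:QToFormal} and works with the summed element $\Psi_\h=\sum_m\Psi_{\h,m}$, convergent by Corollary~\ref{cor:convergence}. From the ABRR equation already established in Theorem~\ref{thm:ABRR-Psi} it reads off that $\Psi_\h^{1,2,34}\Psi_\h^{12,3,4}$ is a fixed point of one linear operator $A_r$ and that $\Psi_\h^{1,23,4}\Psi_\h^{1,2,3}$ is a fixed point of a second operator $A_l$; a direct computation shows that $A_r$ and $A_l$ commute, so the left-hand side is fixed by $A_l$ as well. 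The conclusion is then a pure uniqueness statement: an $\mf h$-invariant solution of $A_l(X)=X$ is determined by its degree-zero component in the last tensor slot, and both sides of the pentagon have that component equal to $\Psi_\h^{1,2,3}$. This bypasses the transport step entirely --- the only input beyond convergence is ABRR itself --- and never returns to the Shapovalov construction. Your alternative in the last paragraph is closer in spirit, since it too builds on ABRR, but the paper does not iterate to an infinite product and then reorganize via the Yang--Baxter equation; the fixed-point/uniqueness mechanism replaces that entire computation.
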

\begin{proof}
Again each $\Psi_{q,m}$ is identified with its image  $\Psi_{\h,m} \in U'_{\h,loc}(\mf h)\tilde \ot U_{\h}(\mf b^+) \hat \ot U_{\h}(\mf n^-)[\h^{-1}]$. Let
\[
\Psi_{\h}=\sum_{m\in \N^r}\Psi_{\h,m} \in U'_{\h,loc}(\mf h) \tilde\ot U_{\h}(\mf b^+)\hat\ot U_{\h}(\mf n^-)
\]
(see Corollary~\ref{cor:convergence}).

The ABRR equation implies that $\Psi_{\h}^{1,2,34}$ satisfies
\[
\Psi_{\h}^{1,2,34}=(\R^{2,3})^{-1} (\R^{2,4})^{-1} q^{\tth^{2,3}/2+\tth^{2,4}/2}   q^{\tth^{1,2}+\tth^{2,2}/2}\Psi_{\h}^{1,2,34}  q^{-\tth^{1,2}-\tth^{2,2}/2} 
\]
Hence:
\[
\Psi_{\h}^{1,2,34}\Psi_{\h}^{12,3,4}=(\R^{2,3})^{-1} (\R^{2,4})^{-1} q^{\tth^{2,3}/2+\tth^{2,4}/2}   q^{\tth^{1,2}+\tth^{2,2}/2}\Psi_{\h}^{1,2,34}  \Psi_{\h}^{12,3,4}q^{-\tth^{1,2}-\tth^{2,2}/2} 
\]
Similarly, $\Psi_{\h}^{1,23,4}$ satisfies
\[
\Psi_{\h}^{1,23,4}=(\R^{3,4})^{-1} (\R^{2,4})^{-1} q^{\tth^{3,4}/2+\tth^{2,4}/2}   q^{\tth^{1,2}+\tth^{1,3}+\tth^{2,2}/2+\tth^{3,3}/2+\tth^{2,3}}\Psi_{\h}^{1,2,34} q^{-\tth^{1,2}-\tth^{1,3}-\tth^{2,2}/2-\tth^{3,3}/2-\tth^{2,3}}
\]
By the $\mf h$-invariance of $\Psi_{\h}$:
\begin{multline}\label{eq:3ABRR}
\Psi_{\h}^{1,23,4}\Psi_{\h}^{1,2,3}=(\R^{23,4})^{-1} q^{\tth^{23,4}/2}   q^{\tth^{1,2}+\tth^{1,3}+\tth^{2,2}/2+\tth^{3,3}/2+\tth^{2,3}}\\ \Psi_{\h}^{1,2,34}\Psi_{\h}^{1,2,3} q^{-\tth^{1,2}-\tth^{1,3}-\tth^{2,2}/2-\tth^{3,3}/2-\tth^{2,3}}
\end{multline}
Let $A_r,A_l$ be the linear endomorphisms of $U'_{\h,loc}(\mf h)\tilde\ot U_{\h}(\mf b^+)\hat\ot U_{\h}(\mf g)\hat\ot U_{\h}(\mf n^-)$ defined by:
\[
A_r(X)=(\R^{2,3})^{-1} (\R^{2,4})^{-1} q^{\tth^{2,3}/2+\tth^{2,4}/2}   q^{\tth^{1,2}+\tth^{2,2}/2}Xq^{-\tth^{1,2}-\tth^{2,2}/2} 
\]
and
\[
A_l(X)=(\R^{3,4})^{-1} (\R^{2,4})^{-1} q^{\tth^{3,4}/2+\tth^{2,4}/2}   q^{\tth^{1,2}+\tth^{1,3}+\tth^{2,2}/2+\tth^{3,3}/2+\tth^{2,3}}Xq^{-\tth^{1,2}-\tth^{1,3}-\tth^{2,2}/2-\tth^{3,3}/2-\tth^{2,3}}
\]

\begin{lem}
\[
A_l(\Psi_{\h}^{1,2,34}\Psi_{\h}^{12,3,4})=\Psi_{\h}^{1,2,34}\Psi_{\h}^{12,3,4}
\]
\end{lem}
\begin{proof}
It suffices to show that $A_r$ and $A_l$ commute. It reduces to show the equality:
\begin{multline}
(\R^{2,34})^{-1}  q^{\tth^{2,34}/2}   q^{\tth^{1,2}+\tth^{2,2}/2}(\R^{23,4})^{-1} q^{\tth^{23,4}/2}   q^{\tth^{1,23}+\tth^{2,2}/2+\tth^{3,3}/2+\tth^{2,3}} \\= (\R^{23,4})^{-1}  q^{\tth^{23,4}/2}   q^{\tth^{1,23}+\tth^{2,2}/2+\tth^{3,3}/2+\tth^{2,3}}(\R^{2,34})^{-1}  q^{\tth^{2,34}/2}   q^{\tth^{1,2}+\tth^{2,2}/2}
 \end{multline}
It is done by a direct computation:
\begin{align*}
 &(\R^{2,34})^{-1}  q^{\tth^{2,34}/2}   q^{\tth^{1,2}+\tth^{2,2}/2}(\R^{23,4})^{-1} q^{\tth^{23,4}/2}   q^{\tth^{1,23}+\tth^{2,2}/2+\tth^{3,3}/2+\tth^{2,3}/2+\tth^{2,3}/2}\\ 
 =&(\R^{2,34})^{-1} (\R^{3,4})^{-1} q^{\tth^{2,3}/2+\tth^{2,4}/2}   q^{\tth^{1,2}+\tth^{2,2}/2}(\R^{2,4})^{-1} q^{\tth^{2,34}/2} q^{\tth^{3,24}/2}  q^{\tth^{1,2}}q^{\tth^{1,3}}q^{\tth^{2,2}/2+\tth^{3,3}/2}\\ 
 =&(\R^{3,4})^{-1} (\R^{2,4})^{-1} (\R^{2,3})^{-1} q^{\tth^{2,3}/2+\tth^{24}/2}   q^{\tth^{1,2}+\tth^{2,2}/2+\tth^{3,3}/2}q^{\tth^{3,24}/2}q^{\tth^{1,3}}(\R^{2,4})^{-1} q^{\tth^{2,34}/2}   q^{\tth^{1,2}}q^{\tth^{2,2}/2}\\ 
 =&(\R^{3,4})^{-1} (\R^{2,4})^{-1} (\R^{2,3})^{-1} q^{\tth^{23,4}/2}   q^{\tth^{1,23}}q^{\tth^{2,2}/2+\tth^{3,3}/2+\tth^{2,3}}(\R^{2,4})^{-1} q^{\tth^{2,34}/2}   q^{\tth^{1,2}}q^{\tth^{2,2}/2}\\ 
 =&(\R^{3,4})^{-1} (\R^{2,4})^{-1} q^{\tth^{23,4}/2}   q^{\tth^{1,23}}q^{\tth^{2,2}/2+\tth^{3,3}/2+\tth^{2,3}}(\R^{2,3})^{-1} (\R^{2,4})^{-1} q^{\tth^{2,34}/2}   q^{\tth^{1,2}}q^{\tth^{2,2}/2}\\ 
\end{align*}
\end{proof}
Any $\mf h$-invariant solution of the equation $A_l(X)=X$ is uniquely determined by its degree zero part with respect to the gradation on its last component. Both $\Psi_{\h}^{1,2,34}\Psi_{\h}^{12,3,4}$ and $\Psi_{\h}^{1,23,4}\Psi_{\h}^{1,2,3}$ are $\mf h$-invariant solutions of this equation, and their degree zero parts with respect to the last component are both $\Psi_{\h}^{1,2,3}$, which implies that they are equal.
\end{proof}

\subsection{The shifted dynamical twist}\label{sec:shiftedCocycle}
Let $\nu=(\nu_1,\dots, \nu_r)\in \C^r$ and assume that for any $\beta=\sum_i \beta_i\alpha_i\in R^+$, $\prod_i \nu_i^{\beta_i} \neq 1$. Define an automorphism of $U_q(\mf g)$ by
\[
\forall i \in \{1,\dots,r\},\ \sigma_{\nu}(e_i^{\pm})=\nu_i^{\pm 1} e_i^{\pm},\quad \sigma_{\nu}(k_i^{\pm 1})=k_i^{\pm 1}
\]
For all $m\in \N^r$, define $\Psi_{q,m}^{\sigma_{\nu}}=(Sh_{\nu} \ot \id^{\ot 2})(\Psi_{q,m})$ which belongs to $U_q(\mf h)\ot U_q(\mf b^+)\ot U_q(\mf n^-)[1/D^{12}_{\nu}]$ according to Lemma~\ref{lem:morphShift}.
\begin{thm}\label{thm:mainPsi}
\begin{enumerate}[(a)]
\item The set $\{\Psi_{q,m}^{\sigma_{\nu}},m\in \N^r\}$ satisfies the system of mixed pentagon equation (see Theorem~\ref{thm:mixPentaSansShift})
\item The set $\{\Psi_{q,m}^{\sigma_{\nu}},m\in \N^r\}$ satisfies the modified ABRR system
\begin{equation} \label{eq:modifiedABRR}
\forall m\in \N^r,\Psi_{q,m}^{\sigma_{\nu}}=\sum_{m=m'+m''}(\bar \R^{-1})_{m'}^{2,3} \ul{\Ad(q^{\tth^{1,2} + \tth^{2,2}/2})}\circ (\id \ot \sigma_{\nu} \ot \id)(\Psi_{q,m''}^{\sigma_{\nu}})
\end{equation}
\end{enumerate}
\end{thm}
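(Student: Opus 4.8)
The plan is to deduce both items from their \emph{unshifted} counterparts --- the mixed pentagon system of Theorem~\ref{thm:mixPentaSansShift} and the ABRR system of Theorem~\ref{thm:ABRR-Psi} --- by transporting these identities through the shift isomorphism $\Sh_\nu\ot\id^{\ot k}$ and reading off the extra terms it produces. First I would check that this transport is well defined: $\Sh_\nu$ is diagonal in the $k_i$ and hence commutes with the $\Z^r$-action on $U_q(\mf h)$, so it carries $D^{12}$ to $D^{12}_\nu=(\Sh_\nu\ot\id)(D^{12})$ and $D^{123}$ to $D^{123}_\nu:=(\Sh_\nu\ot\id^{\ot 2})(D^{123})$, and both targets are again left denominator sets by Lemma~\ref{lem:OreCond}. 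Thus $\Sh_\nu\ot\id^{\ot 2}$ and $\Sh_\nu\ot\id^{\ot 3}$ are algebra isomorphisms onto the $D^{12}_\nu$- and $(D^{12}_\nu,D^{123}_\nu)$-localized algebras, compatible with all insertion-coproduct maps.

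For (a) I apply $\Sh_\nu\ot\id^{\ot 3}$ to the mixed pentagon equation. In the terms $\Psi_{q,\bullet}^{1,2,34}$, $\Psi_{q,\bullet}^{1,23,4}$ and $\Psi_{q,\bullet}^{1,2,3}$ the $U_q(\mf h)$-slot of $\Psi_{q,\bullet}$ sits untouched in position $1$, so the image is the corresponding insertion of $\Psi_{q,\bullet}^{\sigma_\nu}=(\Sh_\nu\ot\id^{\ot 2})(\Psi_{q,\bullet})$. In the remaining term $\Psi_{q,\bullet}^{12,3,4}$ that slot has been split by $\Delta_q$ into positions $1,2$; but $(\Sh_\nu\ot\id)\circ\Delta_q=\Delta_q\circ\Sh_\nu$ on $U_q(\mf h)$ (immediate on the generators $k_i$), so again the image is $(\Psi_{q,\bullet}^{\sigma_\nu})^{12,3,4}$. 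Since the Drinfeld associator of $U_q(\mf g)$ is trivial, no $\Phi$-term occurs, and the transported equation is exactly the mixed pentagon system for $\{\Psi_{q,m}^{\sigma_\nu}\}$ in $U_q(\mf h)\ot U_q(\mf b^+)\ot U_q(\mf g)\ot U_q(\mf n^-)[1/D^{12}_\nu,1/D^{123}_\nu]$.

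For (b) I apply $\Sh_\nu\ot\id^{\ot 2}$ to the ABRR equation of Theorem~\ref{thm:ABRR-Psi}. The left-hand side becomes $\Psi_{q,m}^{\sigma_\nu}$ at once. On the right-hand side $(\bar\R^{-1})_{m'}$ lies in $U_q(\mf n^+)\hat\ot U_q(\mf n^-)$, so $(\bar\R^{-1})_{m'}^{2,3}$ has trivial component in the first slot and is fixed by $\Sh_\nu\ot\id^{\ot 2}$; everything thus comes down to the identity
\[
(\Sh_\nu\ot\id^{\ot 2})\bigl(\ul{\Ad(q^{\tth^{1,2}+\tth^{2,2}/2})}(\Psi_{q,m''})\bigr)=\ul{\Ad(q^{\tth^{1,2}+\tth^{2,2}/2})}\bigl((\id\ot\sigma_\nu\ot\id)(\Psi_{q,m''}^{\sigma_\nu})\bigr),
\]
after which substitution into the transported equation and summation over $m'+m''=m$ yield~\eqref{eq:modifiedABRR}. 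I would prove this identity componentwise along the $\Z^r$-grading of the second tensor factor (preserved by $\ul{\Ad(q^{\tth^{1,2}+\tth^{2,2}/2})}$), using the defining relations $\ul{\Ad(q^{\tth})}(1\ot e_j^{\pm})=k_j^{\pm 2}\ot e_j^{\pm}$ and $\ul{\Ad(q^{\tth})}|_{U_q(\mf h)^{\ot 2}}=\id$: the $\tth^{1,2}$-part of the adjoint action transfers the grading of the second slot onto the first as a monomial in the $k_j$, so conjugating the first slot by $\Sh_\nu$ only contributes a scalar depending on that grading and on $\nu$, which is precisely the scalar that $\id\ot\sigma_\nu\ot\id$ contributes on the corresponding graded piece of the second slot; meanwhile $\ul{\Ad(q^{\tth^{2,2}/2})}$ commutes with both $\Sh_\nu\ot\id^{\ot 2}$ and $\id\ot\sigma_\nu\ot\id$, since it is diagonal.

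The delicate step is this last identity of (b): one has to keep precise track of the $k$-weights created on the first tensor factor by $\ul{\Ad(q^{\tth^{1,2}+\tth^{2,2}/2})}$ and match them against those introduced by $\Sh_\nu$, and it is exactly this matching that forces the correction $\id\ot\sigma_\nu\ot\id$ rather than a plain shift. The bookkeeping goes through because the $\Psi_{q,m}$ are $\mf h$-invariant (Theorem~\ref{thm:ABRR-Psi}), which ties together the gradings of the last two tensor factors. Part (a), by contrast, is essentially formal once the shift is recognized as an algebra isomorphism commuting with the coproduct on the Cartan part.
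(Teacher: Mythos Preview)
Your proposal is correct and follows essentially the same route as the paper. For (a) the paper simply notes that $\Sh_\nu$ commutes with the insertion-coproduct morphisms, which is exactly your observation that $(\Sh_\nu\ot\id)\circ\Delta_q=\Delta_q\circ\Sh_\nu$ on $U_q(\mf h)$; for (b) the paper reduces to the commuting-square identity $(\Sh_\nu\ot\id)\circ\ul{\Ad(q^{\tth})}=\ul{\Ad(q^{\tth})}\circ(\id\ot\sigma_\nu)\circ(\Sh_\nu\ot\id)$ on $U_q(\mf h)\ot U_q(\mf b^+)$ and checks it on the generators $1\ot e_i$, which is precisely your componentwise verification after stripping off the $\ul{\Ad(q^{\tth^{2,2}/2})}$ factor.
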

\begin{proof}
Part (a) follows from the fact that $\Sh_{\nu}$ commutes with the various insertion-coproduct morphisms. For proving (b) it is enough to show that the following diagram commutes:
\begin{center}
 \begin{tikzpicture}
\matrix (m) [matrix of math nodes, row sep=3em,
column sep=8em, text height=1.5ex, text depth=0.25ex]
{ U_q(\mf h)\ot U_q(\mf b^+)&   U_q(\mf h)\ot U_q(\mf b^+)\\
  U_q(\mf h)\ot U_q(\mf b^+) &  U_q(\mf h)\ot U_q(\mf b^+)\\ };
\path[->,font=\scriptsize]
(m-1-1) edge node[auto] {$\ul{\Ad(q^{\tth})}$} (m-1-2)
(m-2-1) edge node[below] {$\ul{\Ad(q^{\tth})}\circ \sigma_{\nu}^{(2)}$} (m-2-2)
(m-1-1) edge node[left] {$\Sh_{\nu}\ot \id$} (m-2-1)
(m-1-2) edge node[right] {$\Sh_{\nu}\ot \id$} (m-2-2)
;
\end{tikzpicture}
\end{center}
As the restriction of $\ul{\Ad(q^{\tth})}\circ \sigma_{\nu}^{(2)}$ and $\ul{\Ad(q^{\tth})}$ to $U_q(\mf h)^{\ot 2}$ is the identity, it suffices to prove it for the $1\ot e_i, i=1\dots r$.
Indeed, on the one hand
\begin{align*}
 \Sh_{\nu} \circ \ul{\Ad(q^{\tth})}(1\ot e_i) &= \Sh_{\nu}(k_i^2 \ot e_i)\\
&= \nu_i k_i^2 \ot e_i
\end{align*}
on the other hand
\begin{align*}
\ul{\Ad(q^{\tth})}\circ \sigma_{\nu}^{(2)} \circ \Sh_{\nu}(1\ot e_i) &= \ul{\Ad(q^{\tth})}\circ \sigma_{\nu}^{(2)}(1 \ot e_i)\\
&= \nu_i k_i^2 \ot e_i
\end{align*}
\end{proof}
Using Lemma~\ref{lem:morphShift} and by the assumption on $\nu$, $\Psi_{q,m}^{\sigma_{\nu}} \in U_q(\mf h) \ot U_q(\mf b^+) \ot U_q(\mf n^-)[1/D^{12}_{\nu}]$ can be identified with an element $(\Psi_{\h,\sigma_{\nu}})_{m} \in U_{\h}'(\mf h) \hat\ot U_{\h}(\mf b^+) \hat\ot U_{\h}(\mf n^-)[\h^{-1}]$. Let us first prove:
\begin{lem}\label{lem:shiftedConvergence}
The sum
\[
\Psi_{\h,\sigma_{\nu}}=\sum_{m\in \N^r}(\Psi_{\h,\sigma_{\nu}})_m
\]
is convergent in the $\h$-adic topology and has non-negative $\h$-adic valuation, hence defines an element of $U'_{\h}(\mf h)\hat\ot U_{\h}(\mf b^+)\hat\ot U_{\h}(\mf n^-)$of the form $1+O(\h)$.
\end{lem}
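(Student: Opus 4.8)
The plan is to follow the proof of Corollary~\ref{cor:convergence} almost verbatim, using the \emph{modified} ABRR system of Theorem~\ref{thm:mainPsi}(b) in place of the ordinary ABRR equation and keeping track of the scalar factors $\nu^m:=\prod_i\nu_i^{m_i}$ produced by $\sigma_{\nu}$. Write
\[
A'_{\h}=U'_{\h}(\mf h)\hat\ot U_{\h}(\mf b^+)\hat\ot U_{\h}(\mf n^-),
\]
so that, by Lemma~\ref{lem:morphShift}, each $(\Psi_{\h,\sigma_{\nu}})_m$ is a well defined element of $A'_{\h}[\h^{-1}]$; note also that $(\Psi_{\h,\sigma_{\nu}})_0=1$ since $K_0=1\ot1\ot1$, whence $\Psi_{q,0}^{\sigma_{\nu}}=1$. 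I will prove by induction on $\omega=|m|$ that in fact $(\Psi_{\h,\sigma_{\nu}})_m\in\h^{|m|}A'_{\h}$. Granting this, the sum $\sum_{m\in\N^r}(\Psi_{\h,\sigma_{\nu}})_m$ converges $\h$-adically (for each $N$ only the finitely many $m$ with $|m|<N$ contribute modulo $\h^N$) and defines an element of $A'_{\h}$ congruent to $1$ modulo $\h$, which is exactly the assertion of the Lemma.

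For the inductive step I would first apply the injective algebra morphism of Lemma~\ref{lem:morphShift} to the identity of Theorem~\ref{thm:mainPsi}(b). On the formal side $\ul{\Ad(q^{\tth^{1,2}+\tth^{2,2}/2})}$ becomes $\Ad(e^{\h(\tth^{1,2}+\tth^{2,2}/2)})$ (by commuting squares as in the proof of Proposition~\ref{prop:ABRR-K}), and since $\Psi_{q,m''}^{\sigma_{\nu}}$ is homogeneous of degree $m''$ in its middle tensor slot, $(\id\ot\sigma_{\nu}\ot\id)$ multiplies it by the scalar $\nu^{m''}$. Isolating the term $m'=0$ exactly as in Corollary~\ref{cor:convergence} — which uses the $\mf h$-invariance of $\Psi_{q,m}^{\sigma_{\nu}}$, inherited from Theorem~\ref{thm:ABRR-Psi} because $\Sh_{\nu}$ preserves $\mf h$-invariance — one obtains
\begin{multline*}
(\Psi_{\h,\sigma_{\nu}})_{m_0}\bigl(1-\nu^{m_0}e^{2\ul{dm_0u}}\ot q^{\mu(m_0)}q^{2\ul{dm_0h}}\ot1\bigr)\\
=\sum_{\substack{m'+m''=m_0\\(m',m'')\neq(0,0)}}(\bar\R^{-1})_{m'}^{2,3}\,\nu^{m''}(\Psi_{\h,\sigma_{\nu}})_{m''}\bigl(e^{2\ul{dm''u}}\ot q^{\mu(m'')}q^{2\ul{dm''h}}\ot1\bigr).
\end{multline*}
Using that $(\bar\R^{-1})_{m'}^{2,3}\in\h^{|m'|}A'_{\h}$ (see~\cite{Chari1994}) together with the induction hypothesis for the $m''$ with $|m''|<\omega$, the right-hand side lies in $\h^{\omega}A'_{\h}$.

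Now, as in Corollary~\ref{cor:convergence}, let $v$ be the $\h$-adic valuation of $(\Psi_{\h,\sigma_{\nu}})_{m_0}$ and suppose $v<\omega$. Dividing the displayed identity by $\h^{v}$ and reducing modulo $\h$ — using $q^{\mu(m)}q^{2\ul{dmh}}\equiv1\pmod{\h}$ for every $m$ — the nonzero reduction $\psi:=\h^{-v}(\Psi_{\h,\sigma_{\nu}})_{m_0}\bmod\h\in U(\mf b^+)\ot U(\mf n^-)[[u_1,\dots,u_r]]$ must satisfy $\psi\bigl((1-\nu^{m_0}e^{2\ul{dm_0u}})\ot1\ot1\bigr)=0$. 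This is the one place where the argument genuinely differs from the unshifted case: the linear map $x\mapsto x\bigl((1-\nu^{m_0}e^{2\ul{dm_0u}})\ot1\ot1\bigr)$ on $U(\mf b^+)\ot U(\mf n^-)[[u_1,\dots,u_r]]$ is injective for $m_0\neq0$, because $1-\nu^{m_0}e^{2\ul{dm_0u}}$ is a nonzero element of the domain $\C[[u_1,\dots,u_r]]$ — it is even invertible when $\nu^{m_0}\neq1$ (its constant term is $1-\nu^{m_0}\neq0$), and when $\nu^{m_0}=1$ it equals $1-e^{2\ul{dm_0u}}$, which is nonzero since $\ul{dm_0u}=\sum_i m_{0,i}d_iu_i\neq0$ for $m_0\neq0$. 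Hence $\psi=0$, a contradiction, so $v\geq\omega$ and the induction goes through.

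I do not expect a serious obstacle: the whole role of the shift (Lemma~\ref{lem:morphShift}) is precisely to place $(\Psi_{\h,\sigma_{\nu}})_m$ in the \emph{non-localized} algebra $A'_{\h}[\h^{-1}]$ from the outset, so that the induction of Corollary~\ref{cor:convergence} applies with $A'_{\h}$ replacing $A_{\h}$. The only genuinely new point is the non-zero-divisor (indeed invertibility, when $\nu^{m_0}\neq1$) property of $1-\nu^{m_0}e^{2\ul{dm_0u}}$ in $\C[[u_1,\dots,u_r]]$, which is immediate from the case distinction above; one should also check that the rearrangement of the modified ABRR system into the ``self-term times a factor in the three tensor slots'' form above is the same formal manipulation — relying on the $\mf h$-invariance of $\Psi_{q,m}^{\sigma_{\nu}}$ — as in Corollary~\ref{cor:convergence}.
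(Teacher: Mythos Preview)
Your proposal is correct and follows essentially the same approach as the paper's own proof: induction on $|m|$ via the modified ABRR relation, isolating the $m'=0$ term, bounding the right-hand side using $(\bar\R^{-1})_{m'}\in\h^{|m'|}A'_{\h}$ together with the inductive hypothesis, and concluding via injectivity of multiplication by $1-\nu^{m_0}e^{2\ul{dm_0u}}$ in $\C[[u_1,\dots,u_r]]$. Your treatment is in fact slightly more explicit than the paper's, since you spell out the case distinction $\nu^{m_0}\neq1$ (invertible) versus $\nu^{m_0}=1$ (merely a nonzero element of a domain), whereas the paper just asserts injectivity; note that the summation condition you copied from Corollary~\ref{cor:convergence} should really read $m'\neq0$ (as your phrase ``isolating the term $m'=0$'' makes clear), which is what makes $|m''|<\omega$ hold for every term on the right.
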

\begin{proof}
We show by induction on $|m|$ that $\forall m\in \N^r$, $(\Psi_{\h,\sigma_{\nu}})_{m} \in \h^{|m|} (U'_{\h}(\mf h)\hat\ot U_{\h}(\mf b^+)\hat\ot U_{\h}(\mf n^-))$. The modified ABRR equation leads to the relation
\begin{multline}
(\Psi_{\h,\sigma_{\nu}})_{m_0}(1-\nu^{\ul m_0}e^{2\ul{dm_0u}}\ot q^{\mu(m_0)}q^{2\ul{dm_0h}}\ot 1)\\= \sum_{\substack{m'+m''=m_0\\(m',m'')\neq (0,0)}} (\bar\R^{-1})_{m'}^{2,3} (\Psi_{\h,\sigma_{\nu}})_{m''}(\nu^{\ul m''}e^{2\ul{dm''u}}\ot q^{\mu(m'')}q^{2\ul{dm''h}}\ot 1)
\end{multline}
Then, as in the proof of Corollary~\ref{cor:convergence} one shows that the $\h$-adic valuation of $(\Psi_{\h,\sigma})_{m}$ is at least $|m|$ by remarking that the map
\[
\begin{array}{rcl}
U(\mf b^+)\ot U(\mf n^-)[[u_1,\dots,u_r]]& \rightarrow & U(\mf b^+)\ot U(\mf n^-)[[u_1,\dots,u_r]]\\
x & \mapsto & x((1-\nu^{\ul m}e^{2\ul{dmu}}) \ot 1\ot 1)
\end{array}
\]
is injective for $m\neq 0$. The last statement follows from the fact that $(\Psi_{\h,\sigma_{\nu}})_0=1$.

\end{proof}
We finally get the desired result:
\begin{thm}\label{thm:formalPentaABRR}
The element $\Psi_{\h,\sigma_{\nu}}$ satisfies the mixed pentagon equation 
\[
(\Psi_{\h,\sigma_{\nu}})^{1,2,34}(\Psi_{\h,\sigma_{\nu}})^{12,3,4}=(\Psi_{\h,\sigma_{\nu}})^{1,23,4}(\Psi_{\h,\sigma_{\nu}})^{1,2,3} 
\]
in $U_{\h}(\mf h)\hat \ot U_{\h}(\mf g)^{\hat\ot 3}$ and the modified ABRR equation
\[
\Psi_{\h,\sigma_{\nu}}= (\bar\R^{-1})^{2,3} \Ad(q^{\tth^{1,2} + \tth^{2,2}/2})\circ (\id \ot \sigma_{\nu} \ot \id)(\Psi_{\h,\sigma_{\nu}})
\]
in $U_{\h}(\mf h)\hat\ot U_{\h}(\mf g)^{\hat \ot 2}$.
\end{thm}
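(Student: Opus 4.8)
The plan is to deduce both equations from their fixed-$m$ counterparts in Theorem~\ref{thm:mainPsi}, by transporting those identities to a ``formal'' algebra and then summing over $m\in\N^r$; the only genuine work is the bookkeeping of $\h$-adic valuations that legitimises the regrouping of the resulting infinite series.

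\emph{Transport to the formal setting.} By Lemma~\ref{lem:morphShift}, together with its evident analogue for the four-fold algebra (same reduction-modulo-$\h$ argument, now using $1/D^{123}_\nu$ and the hypothesis $\nu_\beta\neq 1$), the ``rational'' localized algebras of Theorem~\ref{thm:mainPsi} inject, compatibly with all insertion--coproduct morphisms, into $U'_\h(\mf h)\hat\ot U_\h(\mf b^+)\hat\ot U_\h(\mf n^-)[\h^{-1}]$ and $U'_\h(\mf h)\hat\ot U_\h(\mf b^+)\hat\ot U_\h(\mf g)\hat\ot U_\h(\mf n^-)[\h^{-1}]$. Under these injections $\ul{\Ad(q^{\tth^{1,2}+\tth^{2,2}/2})}$ becomes the genuine continuous automorphism $\Ad(e^{\h(\tth^{1,2}+\tth^{2,2}/2)})$ --- this is exactly the content of the commuting squares in the proof of Proposition~\ref{prop:ABRR-K} --- and $\sigma_\nu$ likewise becomes a continuous automorphism. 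Hence the identities of Theorem~\ref{thm:mainPsi}, read at each fixed $m$, hold verbatim in these formal algebras for the elements $(\Psi_{\h,\sigma_\nu})_m$.

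\emph{Summation.} By Lemma~\ref{lem:shiftedConvergence}, $(\Psi_{\h,\sigma_\nu})_m\in\h^{|m|}\bigl(U'_\h(\mf h)\hat\ot U_\h(\mf b^+)\hat\ot U_\h(\mf n^-)\bigr)$, and by the standard estimate recalled in the proof of Corollary~\ref{cor:convergence}, $(\bar\R^{-1})_m\in\h^{|m|}\bigl(U_\h(\mf n^+)\hat\ot U_\h(\mf n^-)\bigr)$. Summing~\eqref{eq:modifiedABRR} over $m\in\N^r$ turns the right-hand side into $\sum_m\sum_{m'+m''=m}(\bar\R^{-1})^{2,3}_{m'}\,\Ad(e^{\h(\tth^{1,2}+\tth^{2,2}/2)})\circ(\id\ot\sigma_\nu\ot\id)\bigl((\Psi_{\h,\sigma_\nu})_{m''}\bigr)$, which is absolutely $\h$-adically convergent by those bounds; regrouping it as a Cauchy product and using continuity of $\Ad(e^{\h(\tth^{1,2}+\tth^{2,2}/2)})\circ(\id\ot\sigma_\nu\ot\id)$ gives $(\bar\R^{-1})^{2,3}\,\Ad(e^{\h(\tth^{1,2}+\tth^{2,2}/2)})\circ(\id\ot\sigma_\nu\ot\id)(\Psi_{\h,\sigma_\nu})$, while the left-hand side sums to $\Psi_{\h,\sigma_\nu}$ by Lemma~\ref{lem:shiftedConvergence}; this is the modified ABRR equation. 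Similarly, each of $\Psi_{\h,\sigma_\nu}^{1,2,34}$, $\Psi_{\h,\sigma_\nu}^{12,3,4}$, $\Psi_{\h,\sigma_\nu}^{1,23,4}$, $\Psi_{\h,\sigma_\nu}^{1,2,3}$ is the image of the convergent series $\Psi_{\h,\sigma_\nu}$ under a continuous insertion--coproduct morphism, so it converges with $m$-th graded piece of $\h$-adic valuation at least $|m|$; hence the two products Cauchy-multiply, and summing the fixed-$m$ mixed pentagon identity of Theorem~\ref{thm:mainPsi}(a) over $m$ gives $\Psi_{\h,\sigma_\nu}^{1,2,34}\Psi_{\h,\sigma_\nu}^{12,3,4}=\Psi_{\h,\sigma_\nu}^{1,23,4}\Psi_{\h,\sigma_\nu}^{1,2,3}$.

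\emph{Descent.} These identities have been obtained in the $[\h^{-1}]$-localized formal algebras, but by Lemma~\ref{lem:shiftedConvergence} every term already lies in the non-localized algebra $U'_\h(\mf h)\hat\ot U_\h(\mf g)^{\hat\ot 2}$, respectively $U'_\h(\mf h)\hat\ot U_\h(\mf g)^{\hat\ot 3}$, which is a topologically free $\C[[\h]]$-module and hence injects into its $[\h^{-1}]$-localization; since moreover $U'_\h(\mf h)\subset U_\h(\mf h)$ and $U_\h(\mf b^\pm),U_\h(\mf n^\pm)\subset U_\h(\mf g)$ compatibly with the completed tensor products, the two equations hold in $U_\h(\mf h)\hat\ot U_\h(\mf g)^{\hat\ot 2}$ and $U_\h(\mf h)\hat\ot U_\h(\mf g)^{\hat\ot 3}$ respectively. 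The one delicate point is the valuation bookkeeping that justifies regrouping the infinite sums, and it is supplied exactly by Lemma~\ref{lem:shiftedConvergence} and the standard bound on $(\bar\R^{-1})_m$ --- so no further obstacle arises.
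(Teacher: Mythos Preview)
Your proof is correct and follows essentially the same approach as the paper: transport the fixed-$m$ identities of Theorem~\ref{thm:mainPsi} to the formal algebras, use the $\h$-adic valuation bound $(\Psi_{\h,\sigma_\nu})_m\in\h^{|m|}(\cdots)$ of Lemma~\ref{lem:shiftedConvergence} (and the corresponding bound on $(\bar\R^{-1})_m$) to sum over $m$, and observe that the result already lies in the non-localized algebra. The paper's own argument is considerably terser---for the ABRR equation it simply says ``A similar argument implies the second statement''---so your explicit treatment of the Cauchy regrouping, the continuity of the operators involved, and the descent step is more detailed than what the paper provides, but it is the same proof.
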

\begin{proof}
For a given $m \in \N^r$,  
\[
\sum_{m'+m''=m}(\Psi_{\h,\sigma_{\nu}})_{m'}^{1,2,34}(\Psi_{\h,\sigma_{\nu}})_{m''}^{12,3,4} 
\]
and
\[
\sum_{m'+m''=m}(\Psi_{\h,\sigma_{\nu}})_{m'}^{1,23,4}(\Psi_{\h,\sigma_{\nu}})_{m''}^{1,2,3} 
\]
are equal according to Theorem~\ref{thm:mainPsi}, and both belong to $\h^{|m|} U'_{\h}(\mf h)\hat\ot U_{\h}(\mf b^+)\hat\ot U_{\h}(\mf n^-)$. Hence the sums over $m$ of these expressions are convergent in the $\h$-adic topology and:
\[
\sum_{m\in \N^r}\sum_{m'+m''=m}(\Psi_{\h,\sigma_{\nu}})_{m'}^{1,2,34}(\Psi_{\h,\sigma_{\nu}})_{m''}^{12,3,4}=\sum_{m\in \N^r}\sum_{m'+m''=m}(\Psi_{\h,\sigma_{\nu}})_{m'}^{1,23,4}(\Psi_{\h,\sigma_{\nu}})_{m''}^{1,2,3} 
\]
which implies the mixed pentagon equation.
A similar argument implies the second statement.
\end{proof}
\section{An algebraic QRA over $U_{\h}(\mf g)$}
In this section, we prove general results relating the ABRR equation and the octagon equation (Theorem~\ref{thm:OctABRR}). Using the objects constructed in the previous section, this gives rise to an ``algebraic'' QRA (Theorem~\ref{thm:alg-QRA}). We then give an explicit form of the representations of $B_n^1$ attached to this QRA.
\subsection{Relationship between ABRR and the octagon equation}
Let $\nu=(\nu_1,\dots,\nu_r) \in (\C^{\times})^r$, $\sigma=\sigma_{\nu}$ (as in section~\ref{sec:shiftedCocycle}). Let $A_{\alg}=U_{\h}(\mf g)\rtimes_{\sigma} \Z$ (the generator of $\Z$ is denoted by $\tilde \sigma)$ be equipped with the coproduct $\Delta_{\h}$ extended to $A_{\alg}$ by setting $\Delta_{\h}(\tilde\sigma)=\tilde\sigma \ot \tilde\sigma$. Let $B_{\alg}=U_{\h}(\mf h) \subset A_{\alg}$ and set $\Delta_{B_{\alg}}=(\Delta_{\h})_{\vert B_{\alg}}$. Let $\R \in U_{\h}(\mf g)^{\hat\ot 2}$ be the $R$-matrix of $U_{\h}(\mf g)$ and $K=q^{t_{\mf h}/2}$. Recall that $\bar\R=K^{-1}\R \in U_{\h}(\mf n^+)\hat\ot U_{\h}(\mf n^-)$. 
\begin{prop}
Let $\Psi \in (B_{\alg} \hat\ot U_{\hbar}(\mf b^+) \hat\ot U_{\hbar}(\mf n^-))^{\times}$ and $E \in (U_{\hbar}(\mf h)\hat\ot (U_{\h}(\mf h)\rtimes_{\sigma}\Z))^{\times}$. Then the octagon equation~\eqref{eq:octo} for $(E,\Psi,\R)$ is equivalent to the following system of equations:
 \begin{subequations}
 \label{eq:SystOct}
 \begin{align}
 \bar\R^{3,2}  \Psi^{1,3,2} E^{1,3}&=E^{1,3}\Psi^{1,3,2} \label{eq:SystOcta}\\
   \Psi E^{12,3}&=E^{12,3}\bar\R^{2,3}\Psi \label{eq:SystOctb}\\
   E^{12,3}&=(K^{2,3})^2 E^{1,3}\label{eq:SystOctc}
  \end{align}
 \end{subequations}
\end{prop}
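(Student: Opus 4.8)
The plan is to put the octagon~\eqref{eq:octo} into a symmetric form and then to split it into its three constituents by expanding it along the triangular decomposition of $U_{\h}(\mf g)$ in the two $A_{\alg}$-legs. First I would multiply~\eqref{eq:octo} on the left by $\Psi$; since $(\Delta_B\ot\id)(E)=E^{12,3}$ it becomes $\Psi E^{12,3}=\R^{3,2}\Psi^{1,3,2}E^{1,3}(\Psi^{1,3,2})^{-1}\R^{2,3}\Psi$, and then I substitute $\R^{2,3}=K^{2,3}\bar\R^{2,3}$ and, using the symmetry $K^{2,3}=K^{3,2}$, also $\R^{3,2}=K^{2,3}\bar\R^{3,2}$, where $\bar\R=K^{-1}\R\in U_{\h}(\mf n^+)\hat\ot U_{\h}(\mf n^-)$. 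The two structural inputs used throughout are: (i) $K^{2,3}=q^{\tth^{2,3}/2}$ lies in $U_{\h}(\mf h)^{\hat\ot 2}$ and commutes with $E^{1,3}$, because the second leg of $E$ lies in $U_{\h}(\mf h)\rtimes_{\sigma}\Z$ and $\sigma=\sigma_{\nu}$ fixes $U_{\h}(\mf h)$, so $\tilde\sigma$ commutes with $U_{\h}(\mf h)$; (ii) conjugation by $K^{2,3}$ rescales $\mf h\times\mf h$-weight homogeneous elements of $U_{\h}(\mf g)^{\hat\ot 2}$.

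The implication ``\eqref{eq:SystOct} $\Rightarrow$ \eqref{eq:octo}'' is then a direct chain: in the right-hand side $K^{2,3}\bar\R^{3,2}\Psi^{1,3,2}E^{1,3}(\Psi^{1,3,2})^{-1}\R^{2,3}\Psi$ of the rewritten octagon, \eqref{eq:SystOcta} turns $\bar\R^{3,2}\Psi^{1,3,2}E^{1,3}$ into $E^{1,3}\Psi^{1,3,2}$, which cancels $(\Psi^{1,3,2})^{-1}$; what is left is $K^{2,3}E^{1,3}\R^{2,3}\Psi=K^{2,3}E^{1,3}K^{2,3}\bar\R^{2,3}\Psi=(K^{2,3})^2E^{1,3}\bar\R^{2,3}\Psi$ by (i); then \eqref{eq:SystOctc} replaces $(K^{2,3})^2E^{1,3}$ by $E^{12,3}$, and \eqref{eq:SystOctb} replaces $E^{12,3}\bar\R^{2,3}\Psi$ by $\Psi E^{12,3}$, which is the left-hand side. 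No hypothesis beyond (i)--(ii) enters here.

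For the converse I would use that $E^{12,3}$ and $E^{1,3}$ have trivial $U_{\h}(\mf n^{\pm})$-parts in legs $2$ and $3$, that $\Psi$ is supported on $U_{\h}(\mf b^+)$ in leg $2$ and on $U_{\h}(\mf n^-)$ in leg $3$ (so that $\Psi^{1,3,2}$ is supported the other way round), and the analogous statements for $\bar\R^{2,3}$ and $\bar\R^{3,2}$. Expanding the rewritten octagon in $U_{\h}(\mf h)\hat\ot A_{\alg}^{\hat\ot 2}$ along the decomposition $U_{\h}(\mf g)=U_{\h}(\mf n^-)\hat\ot U_{\h}(\mf h)\hat\ot U_{\h}(\mf n^+)$ of legs $2$ and $3$, together with the weight bigrading on those legs, one reads off three equations according to the ``nilpotent type'' of the two legs: the part in which legs $2,3$ are purely Cartan is~\eqref{eq:SystOctc}; granted~\eqref{eq:SystOctc}, the part carrying the $(U_{\h}(\mf n^+),U_{\h}(\mf n^-))$-type contributions --- the legs in which $\Psi$ and $\bar\R^{2,3}$ act --- is~\eqref{eq:SystOctb}; and the part carrying the $(U_{\h}(\mf n^-),U_{\h}(\mf n^+))$-type contributions --- where $\Psi^{1,3,2}$ and $\bar\R^{3,2}$ act --- is~\eqref{eq:SystOcta}.

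The main obstacle is carrying out this last decomposition rigorously: the octagon is a single identity, and because $A_{\alg}$ is non-commutative its expansion a priori mixes the three nilpotent types (a product of an $\mf n^-$-term with an $\mf n^+$-term in one leg feeds back into the Cartan part of that leg, and so on). I would resolve this by peeling off the three equations successively, in the order \eqref{eq:SystOctc}, \eqref{eq:SystOctb}, \eqref{eq:SystOcta}, at each stage substituting back the equations already obtained and using (i)--(ii) together with the facts that $\bar\R$ has constant term $1$ and that $\Psi$ and $E$ are invertible in order to clear the lower-order contributions. This is careful bookkeeping in the weight filtration rather than a new idea, and it is where I expect the bulk of the work to lie.
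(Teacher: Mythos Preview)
Your forward direction (system $\Rightarrow$ octagon) is fine and matches the underlying algebra. For the converse, your idea of splitting via the triangular decomposition is exactly right, but you are making it much harder than it needs to be by keeping the equation in the form $\Psi E^{12,3}=\R^{3,2}\Psi^{1,3,2}E^{1,3}(\Psi^{1,3,2})^{-1}\R^{2,3}\Psi$, where the right-hand side genuinely mixes both nilpotent types. The paper avoids your ``careful bookkeeping in the weight filtration'' entirely by first moving $\R^{2,3}\Psi$ to the left, writing the octagon as
\[
\Psi\, E^{12,3}\,\Psi^{-1}(\R^{2,3})^{-1}=\R^{3,2}\Psi^{1,3,2}E^{1,3}(\Psi^{1,3,2})^{-1}.
\]
Now the left-hand side lies in $U_{\h}(\mf h)\hat\ot U_{\h}(\mf b^+)\hat\ot(U_{\h}(\mf b^-)\rtimes_{\sigma}\Z)$ and the right-hand side in $U_{\h}(\mf h)\hat\ot U_{\h}(\mf b^-)\hat\ot(U_{\h}(\mf b^+)\rtimes_{\sigma}\Z)$. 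The intersection of these two subspaces is the pure Cartan part (degree $(0,0)$ in legs $2,3$), so each side must equal its own degree-$(0,0)$ projection, and those projections must coincide. Writing $\tilde\Psi$ for the degree-$(0,0)$ part of $\Psi$ and using that $U_{\h}(\mf h)\rtimes_{\sigma}\Z$ is commutative so that $\tilde\Psi$ and $\tilde\Psi^{-1}$ cancel, these three statements are precisely \eqref{eq:SystOctb}, \eqref{eq:SystOcta}, and \eqref{eq:SystOctc}, respectively, with no inductive peeling required.

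So your proposal is not wrong, but the rearrangement step is the key insight you are missing: it separates the two Borel ``halves'' of the octagon onto opposite sides of the equality, turning what you anticipated as the bulk of the work into a one-line observation about intersecting subalgebras.
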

\begin{proof}
Rewrite the octagon equation as
 \[
  \Psi E^{12,3} \Psi^{-1}(\R^{2,3})^{-1}= \R^{3,2}\Psi^{1,3,2}E^{1,3} (\Psi^{1,3,2})^{-1}
 \]
The left-hand side belongs to $U_{\hbar}(\mf h)\hat\ot U_{\hbar}(\mf b^+)\hat\ot (U_{\hbar}(\mf b^-)\rtimes_{\sigma}\Z)$, and the right-hand side to $U_{\hbar}(\mf h)\hat\ot U_{\hbar}(\mf b^-) \hat\ot (U_{\hbar}(\mf b^+) \rtimes_{\sigma}\Z)$. Thus, each side has to coincide with its degree (0,0) part, and their degree (0,0) parts have to be equal as well (here $U_{\h}(\mf b^{\pm})\rtimes_{\sigma}\Z$ is graded by $|e_i^{\pm}|=\delta_i, |h_i|=|\tilde\sigma|=0$). Denoting by $\tilde\Psi$ the image of $\Psi$ under this projection, the octagon equation is equivalent to:
\begin{align*}
\Psi E^{12,3} \Psi^{-1}(\bar\R^{2,3})^{-1}(K^{2,3})^{-1}&=\tilde\Psi E^{12,3} \tilde\Psi^{-1}(K^{2,3})^{-1}\\
K^{2,3}\bar\R^{3,2}\Psi^{1,3,2}E^{1,3} (\Psi^{1,3,2})^{-1}&=K^{2,3}\tilde\Psi^{1,3,2}E^{1,3} (\tilde\Psi^{1,3,2})^{-1}\\
\tilde\Psi E^{12,3} \tilde\Psi^{-1}(K^{2,3})^{-1}&=K^{2,3}\tilde\Psi^{1,3,2}E^{1,3} (\tilde\Psi^{1,3,2})^{-1}
\end{align*}
Then, the commutativity of $U_{\h}(\mf h)\rtimes_{\sigma}\Z$ implies that $\tilde\Psi$ and $\tilde\Psi^{-1}$ cancel out, which leads to the system~\eqref{eq:SystOct}.
\end{proof}
We make the system~\eqref{eq:SystOct} explicit in a particular case.
\begin{thm}
\label{thm:OctABRR}
 Set $E_{\h,\sigma}=q^{ t_{\mf h}^{1,2}+\frac12t_{\mf h}^{2,2}} (1 \ot \tilde\sigma)$. For any solution $\Psi \in (U_{\hbar}'(\mf h) \ot U_{\hbar}(\mf b^+) \ot U_{\hbar}(\mf b^-))^{\mf h}$ of the linear equation
 \begin{equation}
 \label{eq:algABRR}
   \Psi^{1,2,3} (E^{\sigma}_{\h})^{1,2}=(\bar\R^{2,3})^{-1}(E^{\sigma}_{\h})^{1,2}\Psi^{1,2,3},
 \end{equation}
the triple $(E_{\h,\sigma}, \Psi,\R)$ satisfies the octagon equation.
\end{thm}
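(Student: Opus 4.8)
The plan is to invoke the preceding Proposition, which identifies the octagon equation~\eqref{eq:octo} for $(E_{\h,\sigma},\Psi,\R)$ with the system~\eqref{eq:SystOct}, and then to verify the three identities~\eqref{eq:SystOcta}--\eqref{eq:SystOctc} separately. Identity~\eqref{eq:SystOctc} is a direct computation: since $\Delta_{\h}$ is primitive on the $h_i$'s and $\tilde\sigma$ fixes $\mf h$ pointwise, applying $\Delta_{B_{\alg}}\ot\id$ to $E_{\h,\sigma}=q^{t_{\mf h}^{1,2}+\frac12 t_{\mf h}^{2,2}}(1\ot\tilde\sigma)$ produces $q^{t_{\mf h}^{1,3}+t_{\mf h}^{2,3}+\frac12 t_{\mf h}^{3,3}}(1\ot 1\ot\tilde\sigma)$, which is exactly $(K^{2,3})^2 E_{\h,\sigma}^{1,3}$ because $(K^{2,3})^2=q^{t_{\mf h}^{2,3}}$.

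Next I would observe that~\eqref{eq:SystOcta} is just the hypothesis~\eqref{eq:algABRR} in disguise. Multiplying~\eqref{eq:algABRR} on the left by $\bar\R^{2,3}$ gives $\bar\R^{2,3}\Psi^{1,2,3}(E^{\sigma}_{\h})^{1,2}=(E^{\sigma}_{\h})^{1,2}\Psi^{1,2,3}$; applying the algebra automorphism of $B_{\alg}\hat\ot U_{\h}(\mf g)^{\hat\ot 2}$ that exchanges the two $U_{\h}(\mf g)$-legs sends $\Psi^{1,2,3}\mapsto\Psi^{1,3,2}$, $\bar\R^{2,3}\mapsto\bar\R^{3,2}$, $(E^{\sigma}_{\h})^{1,2}\mapsto E_{\h,\sigma}^{1,3}$, turning the displayed identity into exactly~\eqref{eq:SystOcta}. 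As that automorphism is invertible, the two equations are in fact equivalent.

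The remaining identity~\eqref{eq:SystOctb} is where the real argument lies. Rewriting~\eqref{eq:algABRR} as $\bar\R^{2,3}\Psi^{1,2,3}=(E^{\sigma}_{\h})^{1,2}\Psi^{1,2,3}\big((E^{\sigma}_{\h})^{1,2}\big)^{-1}$ and substituting into~\eqref{eq:SystOctb}, one reduces~\eqref{eq:SystOctb} to the commutation relation $\big[\Psi^{1,2,3},\,E_{\h,\sigma}^{12,3}(E^{\sigma}_{\h})^{1,2}\big]=0$. Using~\eqref{eq:SystOctc}, the symmetry of $t_{\mf h}=2\sum_i\check h_i\ot h_i$, and the triviality of $\tilde\sigma$ on $\mf h$, I then compute
\[
E_{\h,\sigma}^{12,3}(E^{\sigma}_{\h})^{1,2}=q^{t_{\mf h}^{1,2}+t_{\mf h}^{1,3}+t_{\mf h}^{2,3}+\frac12 t_{\mf h}^{2,2}+\frac12 t_{\mf h}^{3,3}}(1\ot\tilde\sigma\ot\tilde\sigma)=q^{\frac12 t_{\mf h}^{123,123}}\,q^{-\frac12 t_{\mf h}^{1,1}}\,(1\ot\tilde\sigma\ot\tilde\sigma),
\]
where $\tfrac12 t_{\mf h}^{123,123}=\sum_i\big(\textstyle\sum_a\check h_i^{(a)}\big)\big(\sum_a h_i^{(a)}\big)$. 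These three pairwise commuting factors each commute with $\Psi^{1,2,3}$: the first because it is built from the total Cartan $\sum_a\check h_i^{(a)},\sum_a h_i^{(a)}$, with which the $\mf h$-invariant element $\Psi$ commutes; the second because $t_{\mf h}^{1,1}$ is supported on the first leg, which lies in the commutative algebra $U'_{\h}(\mf h)$; and the third because $\mf h$-invariance forces $\Psi$ to split into homogeneous pieces whose second leg (in $U_{\h}(\mf b^+)$) has weight $\beta=\sum_i\beta_i\alpha_i$ and whose third leg (in $U_{\h}(\mf n^-)$) has weight $-\beta$, so conjugation by $1\ot\tilde\sigma\ot\tilde\sigma$ multiplies such a piece by $\prod_i\nu_i^{\beta_i}\prod_i\nu_i^{-\beta_i}=1$. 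This establishes~\eqref{eq:SystOctb}, hence the octagon equation.

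I expect~\eqref{eq:SystOctb} to be the only real obstacle: the point is to recognize, via~\eqref{eq:algABRR}, that it is a commutation statement, and then to see that the $\sigma_{\nu}$-twist carried by $E_{\h,\sigma}$ acts trivially on $\Psi$ precisely because the weights of its two braided legs are opposite. Identities~\eqref{eq:SystOcta} and~\eqref{eq:SystOctc} are purely formal.
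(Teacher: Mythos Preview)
Your proof is correct and follows the paper's overall strategy: invoke the preceding Proposition to reduce the octagon equation to the system~\eqref{eq:SystOct}, verify~\eqref{eq:SystOctc} by direct computation, and recognise~\eqref{eq:SystOcta} as the hypothesis~\eqref{eq:algABRR} after swapping the last two legs. The only difference is in the derivation of~\eqref{eq:SystOctb}. The paper multiplies~\eqref{eq:algABRR} on the right by $(E_{\h,\sigma})^{1,3}(K^{2,3})^2$, uses the identity $(E_{\h,\sigma})^{1,23}=(E_{\h,\sigma})^{1,2}(E_{\h,\sigma})^{1,3}(K^{2,3})^2$ (eq.~\eqref{eq:temp}), and then invokes the commutation of $(E_{\h,\sigma})^{1,23}$ with $\bar\R^{2,3}\Psi$ before cancelling a factor $(E_{\h,\sigma})^{1,2}$. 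You instead substitute~\eqref{eq:algABRR} directly into~\eqref{eq:SystOctb} to reduce it to $[\Psi,\,E_{\h,\sigma}^{12,3}E_{\h,\sigma}^{1,2}]=0$, compute that product explicitly, and check that each of its three commuting factors commutes with $\Psi$. The underlying mechanism is identical in both arguments --- $\mf h$-invariance plus the fact that $\tilde\sigma\ot\tilde\sigma$ acts trivially on pieces whose last two legs carry opposite weights --- but your packaging is a little more transparent and avoids the detour through~\eqref{eq:temp}. (One harmless slip: the third leg of $\Psi$ is assumed to lie in $U_{\h}(\mf b^-)$, not $U_{\h}(\mf n^-)$, but since $\sigma$ is trivial on $\mf h$ your weight argument goes through unchanged.)
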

\begin{proof}
Let us first check~\eqref{eq:SystOctc}:
\begin{align*}
 (\Delta_{\h} \ot \id)(q^{ t_{\mf h}^{1,2}+\frac12t_{\mf h}^{2,2}} (1 \ot \tilde\sigma)) &= q^{ t_{\mf h}^{1,3}+t_{\mf h}^{2,3}+\frac12t_{\mf h}^{3,3}} (1 \ot 1\ot \tilde\sigma) \\
 &=q^{t_{\mf h}^{2,3} } q^{ t_{\mf h}^{1,3}+\frac12t_{\mf h}^{3,3}} (1 \ot 1\ot \tilde\sigma)\\
 &= (K^{2,3})^2 (E_{\h,\sigma})^{1,3}.
\end{align*}

Recall from equation~\eqref{eq:temp}, section~\ref{sec:resultRep} that $E_{\h,\sigma}$ satisfies:
\begin{equation}\label{eq:ansatz}
 (E_{\h,\sigma})^{1,23}= (E_{\h,\sigma})^{1,2}(E_{\h,\sigma})^{1,3}(K^{2,3})^2
 \end{equation}
Equation~\eqref{eq:algABRR} is the same as~\eqref{eq:SystOcta}, thus, starting from~\eqref{eq:SystOcta}, permuting the two last component and multiplying by $(E_{\h,\sigma})^{1,3}(K^{2,3})^2$ on the right, one gets
\begin{align*}
 \bar\R^{2,3}  \Psi^{1,2,3} (E_{\h,\sigma})^{1,2}(E_{\h,\sigma})^{1,3}(K^{2,3})^2&=(E_{\h,\sigma})^{1,2}\Psi^{1,2,3}(E_{\h,\sigma})^{1,3}(K^{2,3})^2 
\end{align*}
Then, using equation~\eqref{eq:temp}
\begin{align}\label{eq:tempA}
\bar\R^{2,3} \Psi (E_{\h,\sigma})^{1,23} & = (E_{\h,\sigma})^{1,2}  \Psi (E_{\h,\sigma})^{1,3}(K^{2,3})^2 
 \end{align}
By the $\mf h$-invariance of $\bar\R$ and $\Psi$ and the commutativity of $U_{\h}(\mf h)$, $\bar\R^{2,3}\Psi$ commutes with $(E_{\h,\sigma})^{1,23}$, and $(E_{\h,\sigma})^{1,3}$ commutes with $(K^{2,3})^2$. Using Equation~\eqref{eq:SystOctc}, equation~\eqref{eq:tempA} then implies
 \begin{align}\label{eq:tempB}
(E_{\h,\sigma})^{1,23} \bar\R^{2,3}\Psi &=(E_{\h,\sigma})^{1,2} \Psi (E_{\h,\sigma})^{12,3}
\end{align}
Equations~\eqref{eq:ansatz} and~\eqref{eq:SystOctc} together with the commutativity of $U_{\h}(\mf h)$ implies that $(E_{\h,\sigma})^{1,23}=(E_{\h,\sigma})^{1,2}(E_{\h,\sigma})^{12,3}$. Equation~\eqref{eq:tempB} then implies 
\[
 (E_{\h,\sigma})^{1,2} (E_{\h,\sigma})^{12,3} \bar\R^{2,3}\Psi =(E_{\h,\sigma})^{1,2} \Psi (E_{\h,\sigma})^{12,3}
\]
which implies
\[
(E_{\h,\sigma})^{12,3} \bar\R^{2,3}\Psi = \Psi (E_{\h,\sigma})^{12,3}
\]
The last line is~\eqref{eq:SystOctb}. Then, by Proposition~\ref{thm:OctABRR}, $(E_{\h,\sigma},\Psi,\R)$ satisfies the octagon equation.
\end{proof}
\subsection{A QRA over $U_{\h}(\mf g)$}
We are now in a position to state the main Theorem of this section: 
\begin{thm}\label{thm:alg-QRA}
 $B_{\alg,\sigma}=(U_{\h}(\mf h), \Delta_{\h}, \Psi_{\h,\sigma}, E_{\h,\sigma})$ is a QRA over $A_{\alg}=U_{\h}(\mf g) \rtimes_{\sigma} \Z $.
\end{thm}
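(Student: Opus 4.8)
The plan is to verify the QRA axioms directly, viewing $A_{\alg}=U_{\h}(\mf g)\rtimes_{\sigma}\Z$ as a QTQBA with \emph{trivial} associator $\Phi_{A_{\alg}}=1$; this is legitimate because $(A_{\alg},\R)$ is a genuine quasi-triangular Hopf algebra, so the QTQBA axioms with $\Phi=1$ are exactly the hexagon relations and the coassociativity of $\Delta_{\h}$. With $B_{\alg}=U_{\h}(\mf h)$ and $\Delta_{B_{\alg}}=\Delta_{\h}\vert_{U_{\h}(\mf h)}$ (which lands in $U_{\h}(\mf h)\hat\ot U_{\h}(\mf h)\subset U_{\h}(\mf h)\hat\ot A_{\alg}$), one must establish: invertibility of $E_{\h,\sigma}$ and $\Psi_{\h,\sigma}$ in the appropriate completed tensor powers; the coproduct identity~\eqref{eq:PsiCop}; the mixed pentagon equation~\eqref{eq:mixedPenta}; the commutation~\eqref{eq:Ecop}; and the octagon equation~\eqref{eq:octo}. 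Invertibility is immediate: $E_{\h,\sigma}=q^{t_{\mf h}^{1,2}+\frac12 t_{\mf h}^{2,2}}(1\ot\ts)$ is a product of invertible factors, and $\Psi_{\h,\sigma}\in U'_{\h}(\mf h)\hat\ot U_{\h}(\mf b^+)\hat\ot U_{\h}(\mf n^-)\subset U_{\h}(\mf h)\hat\ot A_{\alg}^{\hat\ot 2}$ has the form $1+O(\h)$ by Lemma~\ref{lem:shiftedConvergence}. Since $\Phi_{A_{\alg}}=1$, the mixed pentagon equation~\eqref{eq:mixedPenta} reduces to $\Psi^{1,2,34}\Psi^{12,3,4}=\Psi^{1,23,4}\Psi^{1,2,3}$, which is exactly Theorem~\ref{thm:formalPentaABRR}.

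Next I would dispose of the two ``commutation'' axioms. For~\eqref{eq:PsiCop}: since $\Delta_{B_{\alg}}$ is coassociative (being the restriction of the Hopf coproduct of $U_{\h}(\mf g)$), the identity is equivalent to $\Psi_{\h,\sigma}$ commuting with the image of $(\Delta_{B_{\alg}}\ot\id)\circ\Delta_{B_{\alg}}$; as $U_{\h}(\mf h)$ is topologically generated by the $h_i$ and $(\Delta_{B_{\alg}}\ot\id)\Delta_{B_{\alg}}(h_i)=h_i\ot 1\ot 1+1\ot h_i\ot 1+1\ot 1\ot h_i$, this is precisely the $\mf h$-invariance of $\Psi_{\h,\sigma}$, which holds by construction (Theorem~\ref{thm:ABRR-Psi} and Lemma~\ref{lem:shiftedConvergence}). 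For~\eqref{eq:Ecop}: $q^{t_{\mf h}^{1,2}+\frac12 t_{\mf h}^{2,2}}$ commutes with $\Delta_{\h}(h_i)=h_i\ot 1+1\ot h_i$ because $\mf h$ is abelian and $t_{\mf h}\in S^2(\mf h)$, and $1\ot\ts$ commutes with $\Delta_{\h}(h_i)$ because $\sigma_{\h}(h_i)=h_i$; hence $E_{\h,\sigma}$ commutes with $\Delta_{\h}(b)$ for every $b\in U_{\h}(\mf h)$.

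The octagon equation~\eqref{eq:octo} is where I would invoke Theorem~\ref{thm:OctABRR}: the triple $(E_{\h,\sigma},\Psi,\R)$ satisfies~\eqref{eq:octo} for any $\mf h$-invariant $\Psi\in U'_{\h}(\mf h)\hat\ot U_{\h}(\mf b^+)\hat\ot U_{\h}(\mf b^-)$ solving the linear equation~\eqref{eq:algABRR}, and $\Psi_{\h,\sigma}$ does lie in that space (since $U_{\h}(\mf n^-)\subset U_{\h}(\mf b^-)$ and $\Psi_{\h,\sigma}$ is $\mf h$-invariant). So it remains to check that $\Psi_{\h,\sigma}$ solves~\eqref{eq:algABRR}, which I claim is literally the modified ABRR equation of Theorem~\ref{thm:formalPentaABRR}. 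Concretely I would rewrite~\eqref{eq:algABRR} as $\Psi_{\h,\sigma}=(\bar\R^{2,3})^{-1}(E_{\h,\sigma})^{1,2}\,\Psi_{\h,\sigma}\,((E_{\h,\sigma})^{1,2})^{-1}$ and observe that $(E_{\h,\sigma})^{1,2}=q^{t_{\mf h}^{1,2}+\frac12 t_{\mf h}^{2,2}}(1\ot\ts\ot 1)$, that conjugation by $1\ot\ts\ot 1$ is the automorphism $\id\ot\Ad(\ts)\ot\id=\id\ot\sigma_{\h}\ot\id$ (whose restriction to the $U_{\h}(\mf b^+)$-leg is $\sigma_{\nu}$), and that conjugation by $q^{t_{\mf h}^{1,2}+\frac12 t_{\mf h}^{2,2}}$ is $\Ad(q^{t_{\mf h}^{1,2}+t_{\mf h}^{2,2}/2})$; hence~\eqref{eq:algABRR} becomes $\Psi_{\h,\sigma}=(\bar\R^{-1})^{2,3}\Ad(q^{t_{\mf h}^{1,2}+t_{\mf h}^{2,2}/2})\circ(\id\ot\sigma_{\nu}\ot\id)(\Psi_{\h,\sigma})$, i.e.\ exactly Theorem~\ref{thm:formalPentaABRR}.

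I expect the only step that is more than formal bookkeeping to be this last identification: one has to check that the semidirect-product generator $\ts$ together with the Cartan exponential $q^{t_{\mf h}^{1,2}+\frac12 t_{\mf h}^{2,2}}$ really assemble, inside the correct completed algebra, into an element whose adjoint action reproduces on the nose the operator $(\bar\R^{-1})^{2,3}\circ\Ad(q^{t_{\mf h}^{1,2}+t_{\mf h}^{2,2}/2})\circ(\id\ot\sigma_{\nu}\ot\id)$ appearing in the modified ABRR equation, so that the hypotheses of Theorems~\ref{thm:OctABRR} and~\ref{thm:formalPentaABRR} dovetail. Everything else is a formal consequence of $\Phi_{A_{\alg}}=1$, the coassociativity of $\Delta_{\h}$, and the $\mf h$-invariance of $\Psi_{\h,\sigma}$.
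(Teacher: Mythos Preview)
Your proposal is correct and follows essentially the same route as the paper's proof: reduce~\eqref{eq:PsiCop} to the $\mf h$-invariance of $\Psi_{\h,\sigma}$, invoke Theorem~\ref{thm:formalPentaABRR} for the mixed pentagon with trivial associator, and obtain the octagon by identifying~\eqref{eq:algABRR} with the modified ABRR equation and applying Theorem~\ref{thm:OctABRR}. Your write-up is in fact more explicit than the paper's (which dispatches~\eqref{eq:Ecop} as ``clear'' and the identification of~\eqref{eq:algABRR} with~\eqref{eq:modifiedABRR} in one sentence), but the logical skeleton is identical.
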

\begin{proof}
The relation~\eqref{eq:Ecop} is clear  and the relation~\eqref{eq:PsiCop} is the $\mf h$-invariance of $\Psi_{\h,\sigma}$. The element $\Psi_{\h,\sigma}$ belongs to $U_{\h}(\mf h)\hat\ot U_{\h}(\mf g)^{\hat\ot 2}$ and satisfies the mixed pentagon equation~\eqref{eq:mixedPenta} with trivial associator according to Theorem~\ref{thm:formalPentaABRR}. Equation~\eqref{eq:algABRR} of Theorem~\ref{thm:OctABRR} is nothing but the ABRR equation~\eqref{eq:modifiedABRR} of Theorem~\ref{thm:formalPentaABRR}, meaning that $\Psi_{\h,\sigma}$ satisfies the octagon equation with $(E_{\h,\sigma},\R)$.
\end{proof}
\subsection{An explicit formula for the representations of $B_n^1$}
Being a QRA, $B_{\alg,\sigma}$ induces a group morphism $\rho_{\h}:B_n^1 \rightarrow (B_{\alg,\sigma} \ot A_{\alg}^{\ot n})^{\times}$.
\begin{thm}
 The morphism $\rho_{\h}$ coincides with the morphism given in Theorem~\ref{thm:rep_gtsig}.
\end{thm}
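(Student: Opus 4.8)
Both maps are group morphisms $B_n^1\to (U_{\h}(\mf h)\ot A_{\alg}^{\ot n}\rtimes S_n)^{\times}$, so it suffices to check that they agree on the generators $\tau,\sigma_1,\dots,\sigma_{n-1}$. On the $\sigma_i$ there is nothing to do: $A_{\alg}=U_{\h}(\mf g)\rtimes_{\sigma}\Z$ is a genuine quasi-triangular Hopf algebra, hence its associator is $1$, and the general formula for the $B_n^1$-action attached to a QRA (Section~\ref{sec:QRA}) reads $\sigma_i\mapsto (\Phi_{A_{\alg}}^{1\dots i-1,i,i+1})^{-1}(i,i+1)\R_{\h}^{i,i+1}\Phi_{A_{\alg}}^{1\dots i-1,i,i+1}$, which collapses to $(i,i+1)\R_{\h}^{i,i+1}$, exactly the formula of Theorem~\ref{thm:rep_gtsig}. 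So the whole content is to prove the identity
\[
\Psi_{\h,\sigma}^{0,1,2\dots n}\,E_{\h,\sigma}^{0,1}\,(\Psi_{\h,\sigma}^{0,1,2\dots n})^{-1}
=\prod_{i=2}^n (\R_{\h}^{1,i})^{-1}\prod_{i=2}^n K^{1,i}\,E_{\h,\sigma}^{0,1},\qquad K=q^{t_{\mf h}/2},
\]
and I would do this by induction on $n$.

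The base case is $n=2$ (the case $n=1$ being trivial): here no insertion is needed, so $\Psi_{\h,\sigma}^{0,1,2}=\Psi_{\h,\sigma}$ and the identity to prove is $\Psi_{\h,\sigma}E_{\h,\sigma}^{0,1}\Psi_{\h,\sigma}^{-1}=(\R_{\h}^{1,2})^{-1}K^{1,2}E_{\h,\sigma}^{0,1}$. This is exactly the modified ABRR equation~\eqref{eq:algABRR} of Theorem~\ref{thm:OctABRR} (equivalently Theorem~\ref{thm:formalPentaABRR}), rewritten as $\Psi_{\h,\sigma}E_{\h,\sigma}^{0,1}\Psi_{\h,\sigma}^{-1}=(\bar\R_{\h}^{1,2})^{-1}E_{\h,\sigma}^{0,1}$, together with $\R_{\h}=K\bar\R_{\h}$, which gives $(\R_{\h}^{1,2})^{-1}K^{1,2}=(\bar\R_{\h}^{1,2})^{-1}$.

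For the inductive step $n-1\to n$ with $n\geq 3$: by coassociativity of $\Delta_{\h}$ on $A_{\alg}$, the element $\Psi_{\h,\sigma}^{0,1,2\dots n}$ is the image of $\Psi_{\h,\sigma}^{0,1,2\dots(n-1)}$ under the algebra morphism $\id_{U_{\h}(\mf h)}\ot\id_{A_{\alg}}^{\ot n-2}\ot\Delta_{\h}$, i.e.\ under coproducting the last ($(n-1)$-st) tensor leg, which splits it into legs $n-1$ and $n$. Apply this morphism to the induction hypothesis. Since $n-1\geq 2$, it fixes $E_{\h,\sigma}^{0,1}$ and every $(\R_{\h}^{1,i})^{-1},K^{1,i}$ with $i\leq n-2$, while, using $(\id\ot\Delta_{\h})(\R_{\h})=\R_{\h}^{1,3}\R_{\h}^{1,2}$ and the primitivity of $t_{\mf h}$ in its second leg, it sends $(\R_{\h}^{1,n-1})^{-1}\mapsto(\R_{\h}^{1,n-1})^{-1}(\R_{\h}^{1,n})^{-1}$ and $K^{1,n-1}\mapsto K^{1,n-1}K^{1,n}$; so the right-hand side becomes $\prod_{i=2}^n(\R_{\h}^{1,i})^{-1}\prod_{i=2}^n K^{1,i}E_{\h,\sigma}^{0,1}$, while the left-hand side becomes $\Psi_{\h,\sigma}^{0,1,2\dots n}E_{\h,\sigma}^{0,1}(\Psi_{\h,\sigma}^{0,1,2\dots n})^{-1}$, closing the induction. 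The only genuine input is the base case, namely the modified ABRR equation of Theorem~\ref{thm:formalPentaABRR} already proved above; the remaining difficulty is purely one of bookkeeping, and the point to watch is the coassociativity identification of the insertion-coproduct element $\Psi_{\h,\sigma}^{0,1,2\dots n}$ with the once-coproducted $\Psi_{\h,\sigma}^{0,1,2\dots(n-1)}$, together with the ordering convention of the products $\prod_{i=2}^n$, which must be kept consistent so that the newly produced $\R_{\h}$- and $K$-factors land in their correct positions.
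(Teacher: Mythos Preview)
Your argument is correct and is essentially the paper's proof: the paper observes that $\Psi_{\h,\sigma}^{0,1,2\dots n}E_{\h,\sigma}^{0,1}(\Psi_{\h,\sigma}^{0,1,2\dots n})^{-1}=(\id\ot\id\ot\Delta_{\h}^{(n-1)})(\Psi_{\h,\sigma}E_{\h,\sigma}^{0,1}\Psi_{\h,\sigma}^{-1})$, applies the modified ABRR equation~\eqref{eq:algABRR} once to the conjugated factor, and then expands via $\R_{\h}^{1,23}=\R_{\h}^{1,3}\R_{\h}^{1,2}$ and $K^{1,23}=K^{1,2}K^{1,3}$, which is exactly your base case followed by all inductive steps performed at once. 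The only cosmetic difference is that you iterate the coproduct on the last leg rather than applying $\Delta_{\h}^{(n-1)}$ to leg~$2$ directly, which is harmless by coassociativity, as you note.
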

\begin{proof}
 According to Section~\ref{sec:QRA}, the image of the generator $\tau$ is
 \[
  (\Psi_{\h,\sigma}^{0,1,2\dots n}) E_{\h,\sigma}^{0,1}(\Psi_{\h,\sigma}^{0,1,2\dots n})^{-1}
 \]
that is
\[
 (\id_{U_{\h}(\mf h)} \ot \id_{U_{\h}(\mf g)} \ot \Delta_{\h}^{(n-1)}(\Psi_{\h,\sigma} E_{\h,\sigma}^{0,1}\Psi_{\h,\sigma}^{-1} )
\]
Thanks to~\eqref{eq:algABRR}, this is equal to
\[
 (\id_{U_{\h}(\mf h)} \ot \id_{U_{\h}(\mf g)} \ot \Delta_{\h}^{(n-1)}) ((\R^{1,2})^{-1}K^{1,2}E_{\h,\sigma}^{0,1})
\]
We then use the fact the $K^{1,23}=K^{1,2}K^{1,3}$ and $\R^{1,23}=\R^{1,3}\R^{1,2}$ to identify this with the image of $\tau$ in Theorem~\ref{thm:rep_gtsig}.
\end{proof}
\section{A QRA arising from KZ equations}
The monodromy morphism of the cyclotomic KZ differential system can be expressed algebraically by using cyclotomic analogs of the Drinfeld's associator~\cite{Enriquez2008}. Recall that the Drinfeld KZ associator $\Phi_{\kz} \in (U(\mf g)^{\ot 3}[[\h]])^{\mf g}$ is defined as the renormalized holonomy from 0 to 1 of the differential equation
\begin{equation}
 \frac{\dd}{\dd z} G(z) = \frac{\h}{2\pi\sqrt{-1}} \left(\frac{t^{1,2}}{z} + \frac{t^{2,3}}{1-z} \right) G(z)
\end{equation}
i.e. $\Phi_{\kz}=G_0^{-1}G_1$ where $G_0,\ G_1$ are the solutions such that $G_0 \sim z^{\h t^{1,2}}$ when $z \rightarrow 0^+$ on the real axis and $G_1 \sim (1-z)^{\h t^{2,3}}$ when $z \rightarrow 1^-$. Here,  $G_0 \sim z^{t^{1,2}}$ means that  $\tilde G(z)=G_0(z) z^{-\h t^{1,2}}$ is analytic on the interval $]-1,1[$ and $\tilde G(0)=1$.

In the same way, The KZ dynamical pseudo-twist $\Psi_{\kz,\sigma}$ is defined as the renormalized holonomy from 0 to 1 of the following differential equation:
\begin{equation}
 \frac{\dd}{\dd z} H(z) = \frac{\h}{2\pi\sqrt{-1}}\left( \frac{N(t_{\mf h}^{0,1} + \frac12 t_{\mf h}^{1,1})}{z} + \sum_{ a\in \Z/N\Z} \frac{ (\sigma^a \ot \id) t^{1,2}}{z-\zeta_N^a} \right) H(z)
\end{equation}
It means that $\Psi_{\kz,\sigma}=H_1^{-1} H_0$ where $H_0,\ H_1$ are the solutions such that $H_0(z) \sim z^{\h N(t_{\mf h}^{0,1} + \frac12 t_{\mf h}^{1,1})}$ when $z \rightarrow 0^+$ and $H_1(z) \sim  z^{\h t^{1,2}}$ when $z \rightarrow 1^-$. Thus, $\Psi_{\kz,\sigma} \in (U(\mf h) \ot U(\mf g)^{\ot 2} [[\h]])^{\mf h}$.

These differential equations comes, up to a change of variable,from the original KZ system for $n=3$ and from the cyclotomic KZ system for $n=2$ respectively. Finally, the following theorems implies that these monodromy representations can be expressed in the algebraic/categorical language:

\begin{thm}[Drinfeld]
 $A_{\kz}=(U(\mf g)[[\hbar]], \Delta_0, R=e^{\hbar t/2}, \Phi_{\kz})$ is a QTQBA.
\end{thm}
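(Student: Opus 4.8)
The plan is to verify the five defining axioms of a QTQBA for $(U(\mf g)[[\h]],\Delta_0,\R=e^{\h t/2},\Phi_{\kz})$. Two of them are purely formal. Since $\Delta_0$ is cocommutative, $\Delta_0^{2,1}=\Delta_0$ and $(\id\ot\Delta_0)\circ\Delta_0=(\Delta_0\ot\id)\circ\Delta_0$; hence the compatibility $\R\,\Delta_0(a)=\Delta_0^{2,1}(a)\,\R$ reduces to the assertion that $e^{\h t/2}$ is central for $\Delta_0(U(\mf g))$, which holds because $t\in S^2(\mf g)^{\mf g}$, and the last axiom reduces to $[\Phi_{\kz},(\id\ot\Delta_0)\Delta_0(a)]=0$, i.e.\ to the $\mf g$-invariance $\Phi_{\kz}\in(U(\mf g)^{\ot 3}[[\h]])^{\mf g}$, which is automatic because the residues of the defining KZ connection are $\mf g$-invariant, hence so is its regularized holonomy. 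So the real content lies in the pentagon and the two hexagon relations, which I would obtain by the monodromy argument of Drinfeld~\cite{Drinfeld1990a,Drinfeld1990}.

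For the pentagon, I would consider the flat $U(\mf g)^{\ot 4}[[\h]]$-valued KZ connection $\tfrac{\h}{2\pi\sqrt{-1}}\sum_{i<j}t^{i,j}\,\dd\log(z_i-z_j)$ on the configuration space of four points in $\C$ modulo affine transformations, a surface isomorphic to $M_{0,5}$; flatness is exactly the infinitesimal braid relations $[t^{i,j},t^{i,k}+t^{j,k}]=0=[t^{i,j},t^{k,l}]$ satisfied by the $t^{i,j}$. In the real Deligne--Mumford boundary this surface carries the associahedron $K_4$, a pentagon whose five vertices are the five complete bracketings of four factors and whose five edges are the ``rotations''. At each vertex the connection admits a unique solution normalized by a prescribed leading term (a product of powers $z^{\h t}$ at the corresponding normal-crossing corner; existence and uniqueness are proved order by order in $\h$ by Fuchsian ODE theory). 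The transition matrix along each edge is one of $\Phi_{\kz}^{1,2,3},\Phi_{\kz}^{2,3,4},\Phi_{\kz}^{1,23,4},\Phi_{\kz}^{1,2,34},\Phi_{\kz}^{12,3,4}$: when two of the four points collide the defining ODE degenerates to the three-point KZ ODE with the colliding pair inserted through $\Delta_0$, which is precisely the meaning of the coproduct superscripts. Since the pentagon bounds a $2$-cell on which the connection is flat, the ordered product of the five transition matrices around it is $1$, which is the pentagon identity.

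For the two hexagon relations I would run the analogous argument one dimension down, on the configuration space of three points in $\C$ modulo affine transformations, i.e.\ $M_{0,4}=\mathbb{P}^1\setminus\{0,1,\infty\}$ with connection $\tfrac{\h}{2\pi\sqrt{-1}}(t^{1,2}\,\dd\log z+t^{2,3}\,\dd\log(1-z))$ --- the very connection defining $\Phi_{\kz}$. Here $\R=e^{\h t/2}$ enters as a half-monodromy: transporting the canonical solution at a corner along a half-turn around a neighbouring collision point multiplies it by $e^{\pm\h t^{i,j}/2}$, since locally the solution behaves like $z^{\h t^{i,j}}$. Working in the fundamental groupoid of $M_{0,4}$ with tangential base points at $0,1,\infty$, the associator $\Phi_{\kz}$ (and its permutations with coproduct insertions, obtained by the same degeneration analysis) together with these half-monodromies satisfy the two relations that express the monodromy around one puncture as a half-monodromy around another conjugated by the associator; the two relations correspond to the two choices of which pair of points is merged ($\R^{12,3}$ versus $\R^{1,23}$), equivalently to passing through the upper versus the lower half-plane. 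Flatness transports these groupoid relations to the claimed hexagon identities.

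The main obstacle is entirely analytic: making the regularized holonomy rigorous as an element of $U(\mf g)^{\ot n}[[\h]]$ --- harmless order by order, since each coefficient of $\h^k$ is an iterated integral of logarithmic $1$-forms, so Fuchsian theory applies even though the $t^{i,j}$ are not nilpotent --- and, more delicately, pinning down the factorization of $\Phi_{\kz}$ and of the half-monodromies under degeneration with the exact normalizations used to define $\Phi_{\kz}$ and $\R$, so that the products of transition matrices around the pentagon and hexagon loops are literally the two sides of the stated equations and not off by a scalar or a permutation of tensor factors. All of this is carried out in detail in~\cite{Drinfeld1990a,Drinfeld1990}, so in the end one may simply invoke that work.
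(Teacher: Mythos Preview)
Your proposal is correct and in fact goes well beyond what the paper does: the paper does not prove this theorem at all, but simply states it as Drinfeld's result and cites~\cite{Drinfeld1990a,Drinfeld1990}. Your outline is a faithful sketch of Drinfeld's original monodromy argument --- the pentagon from flatness of the $n=4$ KZ connection on the associahedron, the hexagons from half-monodromies and tangential base points on $M_{0,4}$ --- and your final sentence (``one may simply invoke that work'') is exactly what the paper does.
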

\begin{thm}[Enriquez]
 $B_{\kz,\sigma}=(U(\mf h)[[\hbar]], \Delta_0, E_{\kz,\sigma}=e^{\hbar (t_{\mf h}^{1,2} + \frac12 t_{\mf h}^{2,2})}(1 \ot \sigma), \Psi_{\kz,\sigma})$ is a QRA over $A_{\kz}\rtimes Z_{\sigma}$, whose induced representation of $B_n^1$ are exactly those of the monodromy of the cyclotomic KZ system.
\end{thm}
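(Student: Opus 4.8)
The plan is to follow Enriquez~\cite{Enriquez2008} and derive both halves of the statement — that $B_{\kz,\sigma}$ is a QRA over $A_{\kz}\rtimes Z_{\sigma}$, and that its induced representations of $B_n^1$ are the cyclotomic KZ monodromy representations — from the asymptotic analysis of the flat cyclotomic KZ connection near the boundary strata of a suitable compactification of $W_{n,N}$. The first step is to construct this compactification as the cyclotomic analogue of $\overline{M}_{0,n+1}$, obtained by iterated real (spherical) blow-ups along the walls $D_{i,j,\zeta}$ and the loci $\{z_i=0\}$: on it the connection has at worst logarithmic singularities along a normal-crossing divisor, so Deligne's theory of regular-singular connections furnishes, for each boundary stratum, a canonical ``renormalized'' flat frame. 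The associator $\Phi_{\kz}$, the pseudo-twist $\Psi_{\kz,\sigma}$ and the element $E_{\kz,\sigma}$ are then the transition matrices (resp.\ local monodromies) between such frames on the low-dimensional spaces $n\le 3$; in particular $\Psi_{\kz,\sigma}$ and $\Phi_{\kz}$ are genuine elements of $U(\mf h)\ot U(\mf g)^{\ot 2}[[\hbar]]$ and $U(\mf g)^{\ot 3}[[\hbar]]$ because the renormalized holonomies of KZ-type equations converge term by term in $\hbar$.

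Next I would check the QRA axioms. The relations~\eqref{eq:PsiCop} and~\eqref{eq:Ecop} are immediate: the $n=2$ cyclotomic connection commutes with the diagonal $\mf h$-action on $W\ot V^{\ot 2}$, since $t$, $t_{\mf h}$ and the boundary residues $t_{\mf h}^{0,1}+\tfrac12 t_{\mf h}^{1,1}$, $t^{1,2}$ are all $\mf h$-invariant and $\sigma$ fixes $\mf h$ pointwise ($\mf g^{\sigma}=\mf h$); hence $\Psi_{\kz,\sigma}$ is $\mf h$-invariant, which is~\eqref{eq:PsiCop}, and $E_{\kz,\sigma}=e^{\hbar(t_{\mf h}^{1,2}+\frac12 t_{\mf h}^{2,2})}(1\ot\sigma)$ commutes with $\Delta_0(\mf h)$, which is~\eqref{eq:Ecop}. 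For the mixed pentagon~\eqref{eq:mixedPenta} I would work on the $2$-dimensional cyclotomic space parametrizing three points clustering near $0$ together with the marked point $0$: the two sides of~\eqref{eq:mixedPenta} are the transition matrices along the two edge-paths of a pentagonal $2$-cell of the compactification, with exactly one factor degenerating to the ordinary associator $\Phi_{\kz}^{2,3,4}$ (collision of the cluster away from $0$), and flatness of the connection forces them to agree. For the octagon~\eqref{eq:octo} I would use that $E_{\kz,\sigma}$ is the local monodromy of the $n=1$ connection around $z_1=0$ conjugated by $\Psi_{\kz,\sigma}$, and read~\eqref{eq:octo} off the $n=2$ cyclotomic space as the form taken at the level of the universal connection by the type-$B$ braid relation $\tau\sigma_1\tau\sigma_1=\sigma_1\tau\sigma_1\tau$, the factors $\R^{2,3}$, $\R^{3,2}$ being the two half-monodromies of $z_1$ and $z_2$ passing each other and the $\Psi$'s the associativity constraints between the relevant boundary frames; alternatively one may verify that $\Psi_{\kz,\sigma}$ solves a classical ABRR-type differential equation tying it to $E_{\kz,\sigma}$ and then invoke the purely algebraic implication ``ABRR $\Rightarrow$ octagon'' underlying Theorem~\ref{thm:OctABRR}, whose proof uses no feature special to the $q$-deformed setting.

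To identify the induced representations with the monodromy, recall from Section~\ref{sec:QRA} that they send $\sigma_i$ to $(\Phi_{\kz}^{1\dots i-1,i,i+1})^{-1}(i,i+1)e^{\hbar t^{i,i+1}/2}\Phi_{\kz}^{1\dots i-1,i,i+1}$ and $\tau$ to $\Psi_{\kz,\sigma}^{0,1,2\dots n}E_{\kz,\sigma}^{0,1}(\Psi_{\kz,\sigma}^{0,1,2\dots n})^{-1}$. I would match these using the standard iterated-degeneration computation of KZ monodromy: with the base point fixed at $0<z_1^*<\dots<z_n^*$, each elementary move — driving a cluster of consecutive points together, or driving $z_1$ down to $0$ — produces a half-monodromy $(i,i+1)e^{\hbar t^{i,i+1}/2}$, an associator $\Phi_{\kz}$, a renormalized transition $\Psi_{\kz,\sigma}$, or the local monodromy around $z_1=0$. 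For the generators $\sigma_i$ the relevant local picture (points away from $0$ and from the walls $D_{i,j,\zeta}$ with $\zeta\ne 1$) is exactly that of the ordinary KZ connection, so the classical Kohno--Drinfeld theorem~\cite{Drinfeld1990a,Drinfeld1990,Kohno1987b} yields the stated formula. For $\tau$ the point to make precise is that a loop encircling $z_1=0$ in the base $X_n$ lifts to the arc $z_1\mapsto e^{2\pi i\theta/N}z_1$, $\theta\in[0,1]$, in $W_{n,N}$, ending at the deck transform $(1,0,\dots,0)\in(\Z/N\Z)^n$ of its start; its parallel transport is $\exp\bigl(\tfrac{\hbar}{2\pi i}\,N(t_{\mf h}^{0,1}+\tfrac12 t_{\mf h}^{1,1})\cdot\tfrac{2\pi i}{N}\bigr)=\exp\bigl(\hbar(t_{\mf h}^{0,1}+\tfrac12 t_{\mf h}^{1,1})\bigr)$ — the factors of $N$ cancel — composed with the fibrewise action of $(1,0,\dots,0)$, namely $1\ot\sigma\ot 1\ot\cdots\ot 1$, so the two combine to $E_{\kz,\sigma}^{0,1}$; conjugating by the renormalized transport that moves $z_1$ from $z_1^*$ to a neighbourhood of $0$ past the cluster $\{z_2,\dots,z_n\}$ produces precisely $\Psi_{\kz,\sigma}^{0,1,2\dots n}$, giving the displayed expression.

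I expect the main obstacle to be the construction and combinatorial control of the cyclotomic compactification: realizing it as a manifold with corners whose boundary divisors are in bijection with the admissible ``nesting and collision'' data in such a way that exactly the monomials appearing in~\eqref{eq:mixedPenta} and~\eqref{eq:octo} arise as products of transition matrices along its $2$-cells, and then verifying that the renormalized holonomies converge and are $\mf h$-equivariant across the additional cyclotomic poles at $z_1=\zeta_N^a z_j$ (classical for ordinary KZ, but needing to be tracked through the new singular locus). Once the cyclotomic refinement of the Kohno--Drinfeld machinery of~\cite{Enriquez2008} is set up, the remaining verifications are bookkeeping.
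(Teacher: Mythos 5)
This theorem is stated in the paper with the attribution ``Enriquez'' and no proof is given; it is a result quoted from~\cite{Enriquez2008}, not proved here. Your reconstruction follows the same strategy Enriquez uses — a cyclotomic analogue of the Fulton–MacPherson/Axelrod–Singer compactification of $W_{n,N}$ as a manifold with corners, renormalized asymptotic flat frames at the boundary strata, transition matrices giving $\Phi_{\kz}$, $\Psi_{\kz,\sigma}$, $E_{\kz,\sigma}$, and identification with the monodromy by iterated degeneration from a base point $0<z_1^*<\dots<z_n^*$. Your explicit computation of $E_{\kz,\sigma}^{0,1}$ from the local monodromy around $z_1=0$ composed with the deck transformation $(1,0,\dots,0)\in(\Z/N\Z)^n$ is correct, including the cancellation of the factors of $N$, and your treatment of~\eqref{eq:PsiCop},~\eqref{eq:Ecop} and of the mixed pentagon~\eqref{eq:mixedPenta} via flatness across the appropriate pentagonal $2$-cell (with one face degenerating to $\Phi_{\kz}^{2,3,4}$) is in the right spirit.

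One point you should \emph{not} rely on: the parenthetical alternative deriving the octagon from a classical ABRR equation by ``invoking the purely algebraic implication'' underlying Theorem~\ref{thm:OctABRR} and claiming its proof ``uses no feature special to the $q$-deformed setting.'' That proof does use something essential to the quantum case: the proposition preceding Theorem~\ref{thm:OctABRR} splits the octagon into the system~\eqref{eq:SystOct} by exploiting that $\bar\R=K^{-1}\R$ lies in $U_{\h}(\mf n^+)\hat\ot U_{\h}(\mf n^-)$ and that $\Psi$ is taken in $U_{\h}'(\mf h)\hat\ot U_{\h}(\mf b^+)\hat\ot U_{\h}(\mf n^-)$, so that the two sides land in algebras with complementary triangular gradings and must each equal their degree-$(0,0)$ part. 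For $A_{\kz}$ one has $\R_{\kz}=e^{\h t/2}$, which is symmetric and has no triangular decomposition, and $\Psi_{\kz,\sigma}$ is a general element of $(U(\mf h)\ot U(\mf g)^{\ot 2}[[\h]])^{\mf h}$ with no a priori Borel structure; the grading argument collapses. (The paper only uses the converse direction — from the octagon for $\Psi_{\kz,\sigma}$ it \emph{reads off} the classical ABRR in Lemma~\ref{lem:s.c.} — never the implication you need.) Stick with your primary, topological verification of the octagon on the $n=2$ compactified cyclotomic space: flatness along the octagonal $2$-cell in which $z_2$ passes $z_1$ on both sides of $z_1=0$ gives the relation directly, exactly as the hexagon for $\Phi_{\kz}$ in the non-cyclotomic case. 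It would also be worth being a bit more careful than the gloss ``the octagon is the type-$B$ braid relation at the universal level'': the octagon is an axiom that, together with the mixed pentagon, \emph{implies} the relation $\tau\sigma_1\tau\sigma_1=\sigma_1\tau\sigma_1\tau$ once you specialize; it is not literally that relation. With those two caveats your outline is a faithful account of the proof.
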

\section{Equivalence of representations}\label{sec:equiv}
Let $U(\mf h)[[\h]'$ be the QFSHA~\cite{Gavarini2002} associated to the Quantized Enveloping Algebra $U(\mf h)[[\h]]$. Under the identification $U(\mf h)\cong\C[h_1,\dots,h_r]$, $U(\mf h)[[\h]]' \cong \C[[\h h_1,\dots, \h h_r,\h]]\subset \C[h_1,\dots,h_ r][[\h]]$.

We have constructed two QRA's:
\begin{enumerate}[(i)]
\item $B_{\alg}=(U_{\h}(\mf h),\Delta_{\h},\Psi_{\h,\sigma},E_{\h,\sigma})$ over $A_{\alg}=(U_{\h}(\mf g)\rtimes_{\sigma_{\h}}\Z,\Delta_{\h},\Phi=1,\R)$, where $\Psi_{\h,\sigma}$ belongs to $U'_{\h}(\mf h)\hat \ot U_{\h}(\mf g)^{\hat\ot 2}$
\item $B_{\kz}=(U(\mf h)[[\h]],\Delta_0,\Psi_{\kz,\sigma},E_{\kz,\sigma})$ over $A_{\kz}=(U(\mf g)[[\h]]\rtimes_{\sigma}\Z,\Delta_0,\Phi_{\kz},\R_{\kz})$, where $\Psi_{\kz,\sigma}$ belongs to $U(\mf h)[[\h]]' \hat\ot U(\mf g)^{\hat\ot 2}[[\h]]$ according to~\cite[Prop. 4.7]{Enriquez2005}.
\end{enumerate}
\begin{thm}\label{thm:main}
These two QRA's are twist equivalent.
\end{thm}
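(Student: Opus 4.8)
The plan is to combine the classical Kohno--Drinfeld theorem with a rigidity argument for quasi-reflection algebras. The first step is to reduce to the situation of two QRAs over the \emph{same} QTQBA. By Drinfeld's theorem $A_{\kz}=(U(\mf g)[[\h]],\Delta_0,e^{\h t/2},\Phi_{\kz})$ and $A_{\alg}=(U_{\h}(\mf g)\rtimes\Z,\Delta_{\h},1,\R)$ are twist equivalent; what must be checked is that the Drinfeld isomorphism $\phi\colon U(\mf g)[[\h]]\to U_{\h}(\mf g)$ and the twist $F$ can be chosen to be $\mf h$-invariant (weight zero) and to intertwine the two $\Z$-actions, i.e. $\phi\sigma=\sigma_{\h}\phi$ and $\sigma^{\ot 2}(F)=F$. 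This follows from Drinfeld's order-by-order construction: $\Phi_{\kz}$, the element $t$ and hence $\tth$ are $\mf g$-invariant and fixed by the relevant powers of $\sigma$ (since $\sigma$ preserves the Killing form), so every obstruction encountered while building $F$ lies in an $\mf h$- and $\sigma$-invariant subspace and can be killed by an invariant coboundary. Moreover $\phi$ must carry the symmetric part of the classical $r$-matrix of $A_{\kz}^{F}$ to that of $A_{\alg}$, hence fixes $\tth\in S^{2}(\mf h)^{\mf h}$. Transporting $B_{\kz}$ by the QRA twist $(F,1)$ and along $\phi$ then produces a QRA $B'=(U_{\h}(\mf h),\Delta_{\h},\Psi',E')$ over $A_{\alg}$, twist equivalent to $B_{\kz}$ by construction, and one checks $E'=\phi^{\ot 2}(E_{\kz,\sigma})=E_{\h,\sigma}$ because $\phi$ fixes $\tth$ and conjugates $\sigma$ to $\sigma_{\h}$.

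It then remains to show $B'$ and $B_{\alg}=(U_{\h}(\mf h),\Delta_{\h},\Psi_{\h,\sigma},E_{\h,\sigma})$ are twist equivalent: two QRAs over the \emph{same} QTQBA $A_{\alg}$, with the same underlying algebra, coproduct and octagon datum $E_{\h,\sigma}$, both reducing modulo $\h$ to the classical reflection-algebra structure. I would prove a rigidity lemma: any two such QRAs are related by a twist of the form $(1,G)$ with $G\in(U_{\h}(\mf h)\hat\ot U_{\h}(\mf g))^{\times}$, $G\equiv 1\bmod\h$. The argument is the standard Drinfeld induction on $\h$-adic order: if the two QRAs agree up to order $\h^{n}$ after a twist $G_{n-1}$, the order-$\h^{n}$ discrepancy of the pair (dynamical cocycle equation, octagon) is a cocycle in the cohomology of the complex governing deformations of a braided module category over $A_{\alg}\modu$ --- a module/relative version of Drinfeld's pentagon complex --- and this cohomology vanishes in the relevant degree. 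The vanishing rests on the semisimplicity of $\mf g$, which collapses $H^{\bullet}(\mf g,U(\mf g)^{\ot k})$ to its $\mf g$-invariants in the appropriate degrees, together with a symmetry of $E_{\h,\sigma}$ playing the role the symmetry of $\Phi_{\kz}$ plays in Drinfeld's original proof, so that the potentially obstructing class is a coboundary. Alternatively, once $\Psi'$ has been conjugated by such a twist into $U_{\h}(\mf h)\hat\ot U_{\h}(\mf b^+)\hat\ot U_{\h}(\mf n^-)$, one can finish without cohomology: by the Proposition preceding Theorem~\ref{thm:OctABRR} the octagon for $(E_{\h,\sigma},\R,1)$ is equivalent to the linear equation~\eqref{eq:algABRR}, whose solution congruent to $1$ modulo $\h$ is unique (invertibility of $\id-\Ad(q^{\tth^{1,2}+\tth^{2,2}/2})$ on each graded component, exactly as in the proofs of Corollary~\ref{cor:convergence} and Lemma~\ref{lem:shiftedConvergence}), so $\Psi'=\Psi_{\h,\sigma}$ and $B'=B_{\alg}$ on the nose.

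The main obstacle is the equivariant form of the classical reduction together with the vanishing of the relevant cohomology. One must check that Drinfeld's construction of $F$ survives the constraints of $\mf h$-invariance and $\Z$-equivariance, and that the octagon contributes no obstruction beyond the pentagon --- equivalently, that the module cohomology controlling QRA deformations vanishes in the degree where the discrepancy lives, or (in the second route) that $\Psi'$ can indeed be brought to triangular $\mf h$-invariant form by a twist $(1,G)$. The bookkeeping of the semidirect factor $\Z$ and of the shift by $\sigma$ through every step is routine but must be handled with care, since the dynamical twist $\Psi_{\h,\sigma}$ is only defined on the non-localized algebra after the $\mf h^{*}$-shift, so one should keep everything on the localized ``formal'' side until the very end.
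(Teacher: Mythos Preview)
Your first reduction step is correct and matches the paper (the paper goes the other direction, transporting $B_{\alg}$ over to $A_{\kz}$ via $(F,1)$, but this is immaterial). The $\mf h$-invariance of $F$ and the compatibility with $\sigma$ are exactly what the paper uses (Theorem~\ref{thm:DriKD} and its Corollary), and $E_{\h,\sigma}$ indeed goes to $E_{\kz,\sigma}$.

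The gap is in your second step. Route~(a) assumes the obstruction cohomology vanishes; it does not. The complex controlling deformations of $\Psi$ modulo twist is $C_1=\bigoplus_n (U(\mf h)\ot U(\mf g)^{\ot n})^{\mf h}$ with the coHochschild-type differential $d_1$, and by Calaque's computation (Theorem~\ref{thm:damien}) one has $H^2(C_1)\cong(\wedge^2\mf m)^{\mf h}$, which is far from zero. So after killing the coboundary part of $\Psi_n-\Psi'_n$ by a twist, a genuine class $a\in(\wedge^2\mf m)^{\mf h}$ may survive (equation~\eqref{eq:obstruction}). The octagon does \emph{not} make this class a coboundary at the level of the pentagon complex; what the paper shows instead is much more specific: the order-$(n{+}1)$ pentagon forces $\bcyb(\bar r,a)=0$ (Lemma~\ref{lem:cyb}), i.e.\ $a$ is an infinitesimal deformation of the classical $r$-matrix $\bar r$ as a solution of the modified CYBE. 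Identifying this tangent space requires the Etingof--Varchenko classification of dynamical $r$-matrices: one computes $\phi_\alpha(\lambda)=\coth(2(\alpha,\lambda-\nu))$ and checks that the map $\theta:\mf h^*\to\mathcal T_{\mathrm{def}}(\bar r)$, $\mu\mapsto\overline{(\mu\ot\id^{\ot 2})(dr(0))}$, is an isomorphism (this is where hypothesis~(b) of Theorem~\ref{thm:shift}, $\rho_\alpha\neq\pm\tfrac12$, is used). Hence $a$ can be absorbed not by a twist but by an $\mf h^*$-\emph{shift} of $\Psi$; only \emph{then} does the octagon enter, forcing the accumulated shift $\mu$ to vanish (Proposition~\ref{prop:rigid}). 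None of this machinery---the CYBE deformation argument, the Etingof--Varchenko classification, the shift parameter---appears in your sketch.

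Route~(b) has a parallel gap: you posit that $\Psi'$ can be twisted into $U_{\h}(\mf h)\hat\ot U_{\h}(\mf b^+)\hat\ot U_{\h}(\mf n^-)$ and then invoke uniqueness of the ABRR solution. But there is no mechanism in your outline producing such a twist; the triangular form of $\Psi_{\h,\sigma}$ comes from its explicit Shapovalov-form construction, whereas $\Psi'$ (coming from $\Psi_{\kz,\sigma}$) is a monodromy object with no a priori triangularity. Establishing that triangular form is essentially equivalent to the theorem itself.
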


Let $V$ be a $\mf g$-module, $V_{\mf h}$ the corresponding $U_{\hbar}(\mf g)$-module, $W$ an $\mf h$-module and $W_{\hbar}=W[[\h]]$ the corresponding $U_{\hbar}(\mf h)$-module. From the data of $(\mf g, t,\sigma)$, one can construct:
\begin{itemize}
 \item a morphism $\rho_{\kz}:B_n^1 \rightarrow GL(W\ot V^{\ot n}[[\hbar]])$ using the KZ equation
 \item a morphism $\rho_{\h}:B_n^1 \rightarrow GL(W_{\hbar}\ot V_{\h}^{\ot n})$ using the QRA $B_{\alg}$.
\end{itemize}

\begin{cor}
The representations $\rho_{\kz}$ and $\rho_{\h}$ are equivalent.
\end{cor}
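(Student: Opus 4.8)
The plan is to obtain the corollary directly from Theorem~\ref{thm:main} together with the twist-invariance of the functor $A\modu\times B\modu\to B_n^1\modu$ encoded by the commuting triangle of Section~\ref{sec:QRA}. First I would rewrite both sides as induced representations. By Theorem~\ref{thm:alg-QRA} and the subsequent identification of $\rho_{\h}$ with the morphism of Theorem~\ref{thm:rep_gtsig}, $\rho_{\h}$ is the representation of $B_n^1$ that the QRA $B_{\alg}=(U_{\h}(\mf h),\Delta_{\h},\Psi_{\h,\sigma},E_{\h,\sigma})$ over $A_{\alg}=U_{\h}(\mf g)\rtimes_{\sigma_{\h}}\Z$ induces on the pair $(W_{\h},V_{\h})$, while by Enriquez's theorem $\rho_{\kz}$ is the representation induced by the QRA $B_{\kz}=(U(\mf h)[[\h]],\Delta_0,\Psi_{\kz,\sigma},E_{\kz,\sigma})$ over $A_{\kz}=U(\mf g)[[\h]]\rtimes_{\sigma}\Z$ on the pair $(W,V)$. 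These two pairs of modules correspond to one another under the Drinfeld algebra isomorphisms $U(\mf g)[[\h]]\xrightarrow{\sim}U_{\h}(\mf g)$ and $U(\mf h)[[\h]]\xrightarrow{\sim}U_{\h}(\mf h)$ --- this is precisely how $V\mapsto V_{\h}$ and $W\mapsto W_{\h}=W[[\h]]$ were defined --- so that $W_{\h}\ot V_{\h}^{\ot n}$ and $W\ot V^{\ot n}[[\h]]$ are canonically identified as $\C[[\h]]$-modules.

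Next I would invoke Theorem~\ref{thm:main}. After transporting $B_{\alg}$ along the Drinfeld isomorphisms --- which I would check are compatible with the $\mf h$-actions, hence intertwine the Cartan automorphisms $\sigma$ and $\sigma_{\h}$, and therefore extend to isomorphisms of the semidirect products --- the theorem provides $F\in(U(\mf g)^{\hat\ot 2}[[\h]])^{\times}$ and $G\in(U(\mf h)[[\h]]\,\hat\ot\,A_{\kz})^{\times}$ with $B_{\alg}=B_{\kz}^{(F,G)}$ as a QRA over $A_{\kz}^{F}$; here $F$ is a Drinfeld twist killing $\Phi_{\kz}$, so that $A_{\kz}^{F}$ acquires the trivial associator of $A_{\alg}$, and $G$ is the additional reflection datum. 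Since a twist leaves the underlying algebras, hence the module categories, literally unchanged, the commuting triangle of Section~\ref{sec:QRA} yields that the $B_n^1$-representation induced by $B_{\kz}^{(F,G)}$ on $(W,V)$ is equivalent to the one induced by $B_{\kz}$ on $(W,V)$. Chaining this with the identifications of the first paragraph, $\rho_{\h}$ --- the representation induced by $B_{\alg}$ on $(W_{\h},V_{\h})$ --- is equivalent to $\rho_{\kz}$, the one induced by $B_{\kz}$ on $(W,V)$.

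All the real content lives in Theorem~\ref{thm:main}, whose proof produces $(F,G)$ by a rigidity argument modelled on Drinfeld's proof of the classical Kohno--Drinfeld theorem; at the level of the corollary the only things to watch are the bookkeeping above --- matching the two $\rtimes\Z$-structures and the automorphisms $\sigma,\sigma_{\h}$ --- and the requirement that $F$ and $G$ genuinely lie in the completed (not $\h$-inverted) tensor products, so that the intertwiner is an isomorphism of $\C[[\h]]$-modules rather than merely of $\C((\h))$-modules. I expect this last point, the honest $\h$-adic convergence of the conjugating element, to be the only place a nontrivial estimate enters, and it is already available from the valuation bounds for $\Psi_{\h,\sigma}$ (Lemma~\ref{lem:shiftedConvergence}) and for $\Psi_{\kz,\sigma}$.
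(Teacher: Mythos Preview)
Your proposal is correct and is exactly the argument the paper has in mind: the corollary is stated without its own proof because it follows immediately from Theorem~\ref{thm:main} together with the twist-invariance of the functor $B\modu\times A\modu\to B_n^1\modu$ recorded by the commuting triangle in Section~\ref{sec:QRA}. The only minor slip is the direction of the twist --- in the paper one has $A_{\kz}=\alpha(A_{\alg})^F$ rather than $A_{\alg}=A_{\kz}^F$ --- but since twist-equivalence is symmetric this is immaterial.
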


The rest of this section is devoted to the proof of this Theorem. We first apply a twist bringing $A_{\alg}$ to $A_{\kz}$, the subalgebra $B_{\alg}$ to $B_ {\kz}$ and $E_{\alg}$ to $E_{\kz}$. Then, both $\Psi=\Psi_{\kz,\sigma}$ and the image $\tilde\Psi$ of $\Psi_{\h,\sigma}$ satisfy the mixed pentagon equation with $\Phi_{\kz}$. Using deformation theory arguments, we then prove that $\Psi$ and $\tilde\Psi$ are related by an infinitesimal functional shift and an $\mf h$-invariant twist. The fact that both $\Psi$ and the twisted $\tilde\Psi$ satisfy the octagon equation with $E_{\kz}$ and $\R_{\kz}$ implies that the shift is actually trivial.

Let us first recall the following
 \begin{thm}[\cite{Drinfeld1990a,Drinfeld1990}]
 \label{thm:DriKD}
  There exists an algebra isomorphism $\alpha:U_{\hbar}(\mf g) \rightarrow U(\mf g)[[\hbar]]$ and a twist $F \in U(\mf g)^{\ot 2}[[\hbar]]$ such that:
  \begin{enumerate}[(a)]
   \item $(U(\mf g)[[\h]],\Delta_0,\Phi_{\kz},\R_{\kz})=\alpha((U_{\h}(\mf g),\Delta_{\h},1,\R))^F$ as QTQHA
   \item $\alpha$ restricts to the canonical isomorphism $U_{\hbar}(\mf h)\rightarrow U(\mf h)[[\hbar]]$
   \item $F$ is $\mf h$-invariant
  \end{enumerate}
 \end{thm}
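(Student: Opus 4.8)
This is Drinfeld's theorem \cite{Drinfeld1990a,Drinfeld1990}, and the plan is to follow his argument, reducing the statement to three facts rooted in the semisimplicity of $\mf g$: rigidity of $U(\mf g)$ as an associative algebra (which produces $\alpha$), rigidity of quasi-triangular quasi-bialgebra structures on $U(\mf g)[[\h]]$ deforming the trivial cocommutative one (which produces $F$), and the fact that both constructions can be carried out $\mf h$-equivariantly (which gives (b) and (c)).

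First I would construct $\alpha$. The Hochschild cohomology of $U(\mf g)$ with coefficients in $U(\mf g)$ (under the adjoint action) is isomorphic to the Chevalley--Eilenberg cohomology $H^{\bullet}(\mf g,U(\mf g))$. As an $\ad$-module $U(\mf g)$ is a union of finite-dimensional submodules, hence a direct sum of finite-dimensional irreducibles; Whitehead's lemmas give $H^{1}(\mf g,M)=H^{2}(\mf g,M)=0$ for every finite-dimensional $\mf g$-module $M$, so $HH^{2}(U(\mf g),U(\mf g))=0$ and every flat deformation of $U(\mf g)$ is trivial. In particular there is a $\C[[\h]]$-algebra isomorphism $\alpha\colon U_{\h}(\mf g)\xrightarrow{\sim}U(\mf g)[[\h]]$ reducing to the identity mod $\h$. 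To arrange (b) I would run this construction $\mf h$-equivariantly — at each order the obstruction and the correcting term can be chosen in prescribed weight spaces, since $\mf h$ acts semisimply — and use that the subalgebra of $U_{\h}(\mf g)$ generated by the $h_{i}$ is literally undeformed (isomorphic to $\C[h_{1},\dots,h_{r}][[\h]]$); a final automorphism of $U(\mf g)[[\h]]$ trivial mod $\h$, again available by the same vanishing, normalizes $\alpha$ to restrict to the canonical isomorphism $U_{\h}(\mf h)\to U(\mf h)[[\h]]$.

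Transporting the quasi-triangular Hopf structure of $U_{\h}(\mf g)$ through $\alpha$ gives a quasi-triangular quasi-bialgebra $\bar A=(U(\mf g)[[\h]],\Delta',1,\R')$ with trivial associator. Both $\bar A$ and $A_{\kz}=(U(\mf g)[[\h]],\Delta_{0},\Phi_{\kz},\R_{\kz})$ reduce mod $\h$ to $(U(\mf g),\Delta_{0},1,1)$, and both carry the same symmetrized infinitesimal $r$-matrix $t\in S^{2}(\mf g)^{\mf g}$: for $\R_{\kz}=e^{\h t/2}$ this is immediate, and for $\R'$ it holds because the Drinfeld--Jimbo classical $r$-matrix $r$ satisfies $r+r^{2,1}=t$ (the differing antisymmetric parts are absorbed into the twist). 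The central step is then Drinfeld's rigidity theorem: two quasi-triangular quasi-bialgebra structures on $U(\mf g)[[\h]]$ deforming $(U(\mf g),\Delta_{0},1,1)$ with the same $t$ are twist equivalent. I would prove this by an induction on the order in $\h$, simultaneously adjusting the coproduct, the associator and the $R$-matrix; the obstruction to extending a partial twist lives in a cohomology group of the bicomplex governing these deformations, assembled from $H^{1}$ and $H^{2}$ of $\mf g$ acting on $U(\mf g)^{\ot 2}$ and $U(\mf g)^{\ot 3}$, which vanishes since these are unions of finite-dimensional modules. This produces $F=1+O(\h)\in U(\mf g)^{\ot 2}[[\h]]$ with $A_{\kz}=\bar A^{F}$, i.e.\ (a); the quasi-Hopf statement is automatic, the antipode being determined by the rest and carried along canonically by the twist.

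Finally, for (c): every structure map involved — $\Delta_{0}$, $\Delta'$, $\R_{\kz}$, $\R'$, $\Phi_{\kz}$ — is $\mf h$-invariant, because $t$ and $t_{\mf h}$ are $\mf h$-invariant and the transported Drinfeld--Jimbo data has weight zero. Hence the inductive construction of $F$ can be performed inside the $\mf h$-invariant subspaces $(U(\mf g)^{\ot 2})^{\mf h}$: at each order one projects the correcting term onto the invariants, which is legitimate because $\mf h$ acts semisimply and the defining equations are $\mf h$-equivariant, so the non-invariant part solves the homogeneous equation and is discarded. Thus $F$ may be chosen $\mf h$-invariant. The main obstacle is the central step — Drinfeld's rigidity of the quasi-triangular quasi-bialgebra structure — which requires setting up the relevant deformation bicomplex and checking that no obstruction survives; the rest is bookkeeping around the semisimplicity of $\mf g$ and $\mf h$. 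Accordingly I would cite \cite{Drinfeld1990a,Drinfeld1990} for the core of the argument and reproduce only the $\mf h$-equivariant refinements needed here.
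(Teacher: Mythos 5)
The paper does not prove this theorem: it is explicitly \emph{recalled} with a citation to Drinfeld's two papers and used as a black box for the rest of Section~6. So there is no in-paper proof to compare against, only the choice to defer to Drinfeld --- which your proposal also ultimately makes. Your sketch of what lies behind the citation is structured correctly: $HH^2(U(\mf g),U(\mf g))=0$ via Whitehead to produce $\alpha$; rigidity of QTQBA structures deforming $(U(\mf g),\Delta_0,1,1)$ with fixed $t$ to produce $F$; and $\mf h$-equivariant refinements for (b) and (c). One imprecision worth flagging: the complex that actually controls the existence/uniqueness of the twist in Drinfeld's argument is not the Lie-algebra cohomology $H^\bullet(\mf g, U(\mf g)^{\ot n})$ you name. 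That cohomology vanishes by Whitehead, and its vanishing is precisely what lets you reduce cochains to their $\mf g$-invariant parts; but after that reduction one is looking at the co-Hochschild complex on $(U(\mf g)^{\ot n})^{\mf g}$ (the same complex $(C_2,d_2)$ the paper later uses for its QRA rigidity argument, with $\mf m$ replaced by $\mf g$), whose cohomology is $(\Lambda^n\mf g)^{\mf g}$. Here $H^2=(\Lambda^2\mf g)^{\mf g}=0$ gives the twist, but $H^3=(\Lambda^3\mf g)^{\mf g}$ is one-dimensional, not zero; Drinfeld disposes of the resulting potential obstruction by using the $R$-matrix constraint $\R=\exp(\h t/2)$ and the hexagon relations, not just abstract vanishing. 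Your phrase ``which vanishes since these are unions of finite-dimensional modules'' elides this point, which is in fact the genuinely non-formal step in Drinfeld's proof. Given that the paper itself declines to reproduce the argument, it is reasonable that you cite Drinfeld rather than fill this in, but the sketch as written overstates the formality of the rigidity step.
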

\begin{cor}
The isomorphism $\alpha$ extends to an algebra isomorphism $\alpha:A_{\alg} \rightarrow A_{\kz}$ and $A_{\kz}=\alpha(A_{\alg})^F$ as QTQHA.
\end{cor}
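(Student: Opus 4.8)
The plan is to deduce the corollary from Theorem~\ref{thm:DriKD} in two moves: first extend $\alpha$ across the crossed products, then transport and twist the QTQHA structure.

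\emph{Extending $\alpha$.} By part~(b) of Theorem~\ref{thm:DriKD}, $\alpha$ restricts to the canonical isomorphism $U_{\h}(\mf h)\to U(\mf h)[[\h]]$, so $\alpha(h_i)=h_i$ for all $i$; being an algebra map, $\alpha$ then commutes with $\ad(h_i)$ on both sides and therefore preserves the $\Z^r$-grading by $\ad\,\mf h$-weight, carrying the degree-$\lambda$ component of $U_{\h}(\mf g)$ onto that of $U(\mf g)[[\h]]$. Since $\sigma_{\h}$ and $\sigma$ are algebra automorphisms that rescale $e_i^{\pm}$ by $\zeta_i^{\pm 1}$ and fix the Cartan subalgebra, the triangular decomposition shows that each of them acts on a degree-$\lambda$ component by the single scalar $\zeta^{\lambda}:=\prod_i\zeta_i^{\lambda_i}$ — the same scalar on both algebras. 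Hence $\alpha\circ\sigma_{\h}=\sigma\circ\alpha$, and declaring $\alpha(\tsh)=\ts$ extends $\alpha$ to an algebra isomorphism $A_{\alg}\to A_{\kz}$: the crossed-product relation $\tsh\,x\,\tsh^{-1}=\sigma_{\h}(x)$ is carried to $\ts\,\alpha(x)\,\ts^{-1}=\sigma(\alpha(x))$, which holds in $A_{\kz}$.

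\emph{Comparing the QTQHA structures.} On the common underlying algebra $A_{\kz}$, the structure of $\alpha(A_{\alg})^F$ is: coproduct $a\mapsto F^{-1}\alpha^{\ot 2}\!\bigl(\Delta_{\h}(\alpha^{-1}a)\bigr)F$, $R$-matrix $F^{2,1}(\alpha^{\ot 2}\R)F^{-1}$, and associator the $F$-twist of $1$. By part~(a) of Theorem~\ref{thm:DriKD}, when restricted to the subalgebra $U(\mf g)[[\h]]$ these equal $\Delta_0$, $\R_{\kz}$ and $\Phi_{\kz}$ respectively. Since $\R_{\kz}\in U(\mf g)^{\ot 2}[[\h]]$ and $\Phi_{\kz}\in U(\mf g)^{\ot 3}[[\h]]$ do not involve $\ts$, the $R$-matrix and associator of $\alpha(A_{\alg})^F$ agree with those of $A_{\kz}$ on all of $A_{\kz}$. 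The coproducts agree on $U(\mf g)[[\h]]$, so the only thing left to check is their value on $\ts$: here $\alpha(A_{\alg})^F$ gives $F^{-1}(\ts\ot\ts)F$, while $A_{\kz}$ gives $\ts\ot\ts$; these coincide exactly when $(\sigma\ot\sigma)(F)=F$.

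\emph{$\sigma$-invariance of $F$.} This last identity is the only point beyond bookkeeping, and I expect it to be the main (though mild) obstacle. I would obtain it from part~(c): $F$ is $\mf h$-invariant, hence its bihomogeneous components $F_{\mu,\nu}\in U(\mf g)_{\mu}\ot U(\mf g)_{\nu}$ vanish unless $\mu+\nu=0$; and $\sigma$, acting on $U(\mf g)_{\lambda}$ by $\zeta^{\lambda}$ as above, multiplies $F_{\mu,-\mu}$ by $\zeta^{\mu}\zeta^{-\mu}=1$, so $(\sigma\ot\sigma)(F)=F$. (Equivalently, since $\sigma=\Ad(X)$ with $X$ in the connected torus $H=\exp(\mf h)$, cf.\ Section~\ref{sec:notation}, the infinitesimal $\mf h$-invariance of $F$ integrates to $\sigma^{\ot 2}$-invariance.) With this the two QTQHA structures coincide, which proves the corollary. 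The only steps that take a moment's thought are the intertwining $\alpha\circ\sigma_{\h}=\sigma\circ\alpha$, which I reduced to $\alpha$ preserving the root-lattice grading (a consequence of $\alpha(h_i)=h_i$), and this passage from infinitesimal to finite invariance of $F$.
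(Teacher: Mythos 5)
Your proposal is correct and follows essentially the same argument as the paper: extend $\alpha$ by $\tsh\mapsto\ts$ using that $\alpha$ preserves the $\mf h$-weight decomposition (via part~(b)) and hence intertwines $\sigma_{\h}$ with $\sigma$, then observe that the $\mf h$-invariance of $F$ forces $F$ to live in $\bigoplus_{m}U(\mf g)[m]\ot U(\mf g)[-m][[\h]]$ so that $(\sigma\ot\sigma)(F)=F$, which makes the twisted coproduct agree with $\Delta_0$ on $\ts$. The only difference is cosmetic — you spell out the scalar $\zeta^{\lambda}$ and check the left-vs-right twist conjugation more carefully, while the paper states the weight-space intertwining more tersely.
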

\begin{proof}
Set $\alpha(\tilde\sigma_{\h})=\tilde \sigma$. Both $U_{\h}(\mf g)$ and $U(\mf g)[[\h]]$ are direct sums of their weight subspaces, and $\alpha$ preserves these decompositions thanks to property (b). Since $\sigma_{\h}$ acts on a given weight subspace of $U_{\h}(\mf g)$ as $\sigma$ does on the corresponding weight subspace of $U(\mf g)[[\h]]$, $\alpha$ intertwines the action of these automorphisms. This implies the first statement.

Being $\mf h$-invariant, $F$ belongs to $(\bigoplus_{m\in \Z^r} U(\mf g)[m]\ot U(\mf g)[-m])[[\h]]$, hence $(\sigma \ot \sigma)(F)=F$. Therefore,
\begin{align*}
 F\left(\alpha^{\ot 2}\circ\Delta_{\h}\circ \alpha^{-1}(\tilde\sigma)\right)F^{-1} &= F (\tilde\sigma \ot \tilde\sigma)F^{-1}\\
&= F\sigma^{\ot 2}(F^{-1}) (\tilde\sigma \ot \tilde\sigma)\\
&=\Delta_0(\tilde\sigma)
\end{align*}
which implies the second statement.
\end{proof}
We then apply the twist $(F,1)$ to the QRA $B_{\alg}$. The restriction of $\alpha$ leads to the algebra isomorphism $U_{\h}(\mf h) \rightarrow U(\mf h)[[\h]]$ given by $h_i \mapsto h_i$. Hence, it maps the subalgebra $U'_{\h}(\mf h)$ to $U(\mf h)[[\h]]'$ and $E_{\h,\sigma}=q^{\tth+\frac12\tth^{2,2}}$ to $E_{\kz}=e^{\h(\tth+\frac12 \tth^{2,2})}$. It leads to a QRA
\begin{enumerate}[(iii)]
\item $B=(U(\mf h)[[\h]],\Delta_0, E_{\kz},\Psi')$ over $A_{\kz}$
\end{enumerate}

Hence, it remains to prove that $\Psi$ and $\Psi'$ are related by a twist. We first show that they are related by a twist and a ``shift'' (Theorem~\ref{thm:shift}), and then that the shift actually vanishes (Proposition~\ref{prop:rigid}). 
\subsection{Gauge transformation and shifts}
Following~\cite{Enriquez2003}, let $\cg$ be the subgroup of $(U(\mf h)[[\h]]' \hat\ot U(\mf g)^{\mf h})^{\times}$ of elements of the form
\[
G=1+\sum_{n\geq 1} \h^n g_n, g_n \in U(\mf h) \ot U(\mf g)
\]
sucht that the image of $G$ through the tensor product of the reudction maps $U(\mf h)[[\h]]'\rightarrow \widehat{S(\mf h)}$ and $U(\mf g)[[\h]]\rightarrow U(\mf g)$ is of the form $\exp(q)$ where $q\in \widehat{S(\mf h)}\ot \mf g$. If $\Psi \in 1+\h (U(\mf h)[[\h]]' \hat \ot U(\mf g)^{\ot 2}[[\h]])^{\mf h}$ is a solution of the mixed pentagon equation, define the \emph{gauge transformation} of $\Psi$ by $G$ by
\[
G\star \Psi:= G^{1,23} \Psi (G^{12,3})^{-1} (G^{1,2})^{-1}.
\]
Then, $G\star \Psi$ belongs to $1+\h (U(\mf h)[[\h]]' \hat \ot U(\mf g)^{\ot 2}[[\h]])^{\mf h}$ and satisfies the mixed pentagon equation. Indeed, a gauge transformation is a particular case of the notion of twist of a QRA as defined in section~\ref{sec:QRA}.

The notion of shift is defined in the following way: for $\mu \in \mf h^*[[\h]]$, define an automorphism of $U(\mf h)[[\h]]$ by $h_i \mapsto h_i + \mu(h_i)$. This restricts to an automorphism $\sh_{\mu}$ of $U(\mf h)[[\h]]'= \C[[u_1,\dots,u_r,\h]])$ (we set $u_i=\h h_i$) given by $u_i \mapsto u_i+\h\mu(h_i)$. For $X \in U(\mf h)[[\h]]' \hat \ot U(\mf g)^{\ot n}[[\h]]$ set 
\[
X_{\mu}=(\sh_{\mu} \ot \id^{\ot n})(X)
\]
The action of $\mf h^*[[\h]]$ restricts to an action on $\cg$, and if $G \in \cg$ and $\mu \in \mf h^*[[\h]]$, one has
\[
(G^{1,23} \Psi (G^{12,3})^{-1} (G^{1,2})^{-1})_{\mu}=G_{\mu}^{1,23} \Psi_{\mu} (G_{\mu}^{12,3})^{-1} (G_{\mu}^{1,2})^{-1}.
\]
Hence, there is a well defined action of $\cg \rtimes (\mf h^*[[\h]])$ on the set $1+\h (U(\mf h)[[\h]]'\hat \ot U(\mf g)^{\ot 2}[[\h]])^{\mf h}$.
\subsection{Classification of dynamical pseudo twists}
Let $\mf m =\mf n^+ \oplus \mf n^-\subset \mf g$,  and for any $\alpha \in R^+$, choose basis elements $e_{\pm\alpha}$ of the root subspaces $\mf g_{\pm\alpha}$ such that $(e_{\alpha},e_{-\alpha})=1$. Hence,
\[
t_{\mm}=t-t_{\mf h}=2\sum_{\alpha \in R^+} e_{\alpha}\ot e_{-\alpha}+e_{-\alpha}\ot e_{\alpha} \in S^2(\mm)
\].
In this section, we will prove the following:
\begin{thm}
\label{thm:shift}
 Let $\Psi,\Psi' \in 1+\hbar (U(\mf h)[[h]]'\hat\ot U(\mf g)^{\ot 2}[[\hbar]])^{\mf h}$ be two solutions of the mixed pentagon equation
\[
 \Psi^{1,2,34} \Psi^{12,3,4}=\Phi_{\kz}^{2,3,4} \Psi^{1,23,4} \Psi^{1,2,3},
\]
such that:
\begin{enumerate}
\item the image of $\frac{\Psi-1}{\h}$ through the reduction map
\[
U(\mf h)[[\h]]' \hat \ot U(\mf g)^{\ot 2}[[\h]]\subset U(\mf h)\ot U(\mf g)^{\ot 2}[[\h]]\longrightarrow U(\mf h)\ot U(\mf g)^{\ot 2}
\]

is of the form $1\ot \rho_0$ where $\rho_0 \in  (\wedge^2 \mf m )^{\mf h}$.
\item if $\rho_0=\sum_{\alpha \in R^+}\rho_{\alpha} (e_{\alpha} \wedge e_{-\alpha})$, where $\rho_{\alpha} \in \C$ then for any $\alpha \in R^+$, $\rho_{\alpha}\neq \pm\frac12$.
\item 
\[
\Psi-\Psi' \in \h^2 U(\mf h)\ot U(\mf g)^{\ot 2}[[\h]]
\]
\end{enumerate}
Then there exists $(G,\mu) \in \cg \rtimes \mf h^*[[\h]]$ such that the relation
\[
\Psi'= G\star \Psi_{\mu}
\]
holds in $U(\mf h)[[\h]]' \hat \ot U(\mf g)^{\ot 2}[[\h]]$. 
\end{thm}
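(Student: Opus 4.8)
The plan is to build the pair $(G,\mu)$ by successive approximation in the $\h$-adic topology, solving a linear cohomological equation at each order. Hypothesis (3) gives $\Psi\equiv\Psi'$ modulo $\h^2$, which starts the induction at $N=1$. Suppose one has found $(G_N,\mu_N)\in\cg\rtimes\mf h^*[[\h]]$ with $G_N\star(\Psi_{\mu_N})\equiv\Psi'$ modulo $\h^{N+1}$, and write $G_N\star(\Psi_{\mu_N})-\Psi'=\h^{N+1}\delta+O(\h^{N+2})$, where $\delta$ is the reduction modulo $\h$ of the relevant coefficient, an $\mf h$-invariant element of $\widehat{S(\mf h)}\ot U(\mf g)^{\ot 2}$. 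Subtracting the mixed pentagon equation for $\Psi'$ from the one for $G_N\star(\Psi_{\mu_N})$ — both having the same associator $\Phi_{\kz}$ — and using that the two sides already agree up to order $\h^{N}$ while every factor that appears reduces to $1$ modulo $\h$, one finds that $\delta$ is a cocycle for the Drinfeld-type differential $d\colon C^2\to C^3$ of the complex $C^\bullet=(\widehat{S(\mf h)}\ot U(\mf g)^{\ot \bullet})^{\mf h}$ that governs dynamical pseudo-twists over $\Phi_{\kz}$, the differential being assembled from the coproduct of $U(\mf g)$ and the coaction of $U(\mf h)$ on $\widehat{S(\mf h)}$.

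Next one isolates the cohomology that matters. The infinitesimal gauge action of $\cg$ produces, at order $\h^{N+1}$, exact cocycles $d\gamma$ with $\gamma$ in the primitive part $\widehat{S(\mf h)}\ot\mf g$ of $C^1$ (reflecting that elements of $\cg$ reduce to exponentials $\exp(q)$, $q\in\widehat{S(\mf h)}\ot\mf g$), while the infinitesimal shift action of $\mf h^*$ produces a distinguished family of cocycles $\nu\mapsto\xi_\nu$, obtained by applying $\nu\cdot\partial_u$ to the lower-order part of $\Psi$. It thus suffices to prove that every $\mf h$-invariant $d$-cocycle in $C^2$ can be written as $d\gamma+\xi_\nu$ with $\gamma$ of the above type and $\nu\in\mf h^*$ — equivalently, that $\nu\mapsto[\xi_\nu]$ is an isomorphism $\mf h^*\cong H^2(C^\bullet)$. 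Granting this, one corrects $G_N$ by a factor $1+O(\h^{m})$ of the prescribed type and $\mu_N$ by a term $O(\h^{m'})\nu$ for suitable $m,m'$ (fixed by the order at which each correction first acts), checks that the new product agrees with $\Psi'$ one order further, and passes to the limit: $\cg$ and $\mf h^*[[\h]]$ are $\h$-adically complete, so the corrections converge to a pair $(G,\mu)$ with $\Psi'=G\star\Psi_\mu$ in $\widehat{S(\mf h)}\ot U(\mf g)^{\ot 2}[[\h]]$.

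The cohomology computation is the crux, and it is here that hypotheses (1) and (2) are used. Using $\mf h$-invariance one works on the weight-zero part; after filtering $U(\mf g)$ by PBW degree, the associated graded of $d$ splits into a Chevalley--Eilenberg/Harrison-type differential on $\wedge^\bullet\mf g$ — whose degree-$2$ cohomology is controlled by the classical classification of associators and twists (cf.\ \cite{Drinfeld1990a,Enriquez2003}) and accounts for the gauge coboundaries, the use of the Harrison complex being what forces the trivializing cochain to be of exponential/primitive type — and a de Rham-type piece for the factor $\widehat{S(\mf h)}$ coupled to $\mf h$ through the coaction, which accounts for the single extra parameter $\mf h^*$. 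The two pieces are glued by the operator of bracketing with the leading term $1\ot\rho_0$ of $\Psi$: by hypothesis (1), $\rho_0\in(\wedge^2\mf m)^{\mf h}$, so this operator respects the weight decomposition and acts on the root line $\C(e_\alpha\wedge e_{-\alpha})$ by a scalar that is an affine function of $\rho_\alpha$, vanishing precisely when $\rho_\alpha=\pm\tfrac12$; hypothesis (2) therefore makes the gluing operator invertible on every relevant weight space. This invertibility is exactly what excludes cohomology beyond the gauge coboundaries and the shift classes, and what makes the combined gauge-and-shift action surject onto $H^2(C^\bullet)$ at each stage.

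I expect the main obstacle to be precisely this cohomology computation: pinning down $C^\bullet$ and its differential, reducing $H^2(C^\bullet)$ to the classical twist/associator cohomology together with the de Rham factor of $\widehat{S(\mf h)}$, and verifying that the gluing operator determined by $\rho_0$ is invertible under hypothesis (2), so that $H^2(C^\bullet)\cong\mf h^*$ is realized exactly by the shift. Everything around it — the order-by-order induction, the bookkeeping of $\h$-powers, and the $\h$-adic convergence of $(G,\mu)$ — is routine once the cohomological input is available.
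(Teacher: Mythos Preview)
Your overall inductive strategy matches the paper's, and you correctly identify that the inductive step reduces to a cohomological problem. But there is a genuine gap in the way you propose to resolve it.

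You assert that ``$\nu\mapsto[\xi_\nu]$ is an isomorphism $\mf h^*\cong H^2(C^\bullet)$''. This is false for the complex you describe. With the standard co-Hochschild differential (the one actually dictated by the mixed pentagon equation at order $n$), the cohomology is $H^2\cong(\wedge^2\mf m)^{\mf h}$ (this is Calaque's computation, Theorem~\ref{thm:damien} in the paper), which has dimension $|R^+|$, strictly larger than $\dim\mf h^*=r$ as soon as $\mf g\neq\mf{sl}_2$. So the shift classes cannot span $H^2$, and an arbitrary cocycle obstruction $a\in(\wedge^2\mf m)^{\mf h}$ cannot be absorbed by a shift plus a gauge.

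What the paper does instead is to show that the obstruction $a$ is \emph{not} arbitrary: expanding the mixed pentagon equation one order further (to $\h^{n+1}$) yields the extra constraint $\bcyb(\bar r,a)=0$ (Lemma~\ref{lem:cyb}), i.e.\ $a$ lies in the tangent space $\mathcal T_{def}(\bar r)$ to solutions of the modified CYBE at $\bar r=2\rho_0$. It is this smaller space that is isomorphic to $\mf h^*$ via infinitesimal shifts. The isomorphism uses two ingredients you do not invoke: the Etingof--Varchenko classification, which gives $\phi_\alpha(\lambda)=\coth(2(\alpha,\lambda-\nu))$ and hence injectivity of the shift map (since $\csch^2\neq 0$); and, for the dimension bound $\dim\mathcal T_{def}(\bar r)\leq r$, the recursion $a_{\alpha+\beta}(r_\alpha+r_\beta)=a_\alpha(r_\beta-r_{\alpha+\beta})+a_\beta(r_\alpha-r_{\alpha+\beta})$ coming from $\bcyb(\bar r,a)=0$. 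Hypothesis~(b) enters here, and not as you describe it: the CYBE relation $r_\alpha r_\beta-r_\alpha r_{\alpha+\beta}-r_\beta r_{\alpha+\beta}+1=0$ shows that $r_\alpha+r_\beta=0$ would force $r_\alpha^2=1$, so $\rho_\alpha\neq\pm\tfrac12$ guarantees $r_\alpha+r_\beta\neq0$ and makes the recursion solvable, determining all $a_\alpha$ from the $a_{\alpha_i}$ for simple roots. This is not a diagonal ``gluing operator'' on each root line vanishing at $\pm\tfrac12$; it is a coupling between different roots, and the role of hypothesis~(b) is to make the coupling triangular rather than to make a scalar nonzero.
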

\begin{proof}

Let us explain the strategy of the proof: we will first prove the following:
\begin{prop}\label{prop:main}
If $\Psi,\Psi' \in 1+\h (U(\mf h)[[\h]]'\hat\ot U(\mf g)^{\ot 2}[[\h]])^{\mf h}$ are two solutions of the mixed pentagon equation such that for some $n\geq 2$
\[
\Psi-\Psi' \in \h^n (U(\mf h)\ot U(\mf g)^{\ot 2}[[\h]])
\]
then there exists $(G,\mu) \in \cg\rtimes \mf h^*[[\h]]$ such that $G-1\in \h^n U(\mf h)\ot U(\mf g)[[\h]]$, $\mu \in \h^{n-2}\mf h^*[[\h]]$ and
\[
\Psi'-G\star\Psi_{\mu} \in \h^{n+1} (U(\mf h)\ot U(\mf g)^{\ot 2}[[\h]]).
\]
\end{prop}

Then we construct an infinite sequence $(G_1,\mu_1),(G_2,\mu_2),\dots$, $G_n \in 1+\h^{n+1}(U(\mf h)\ot U(\mf g)[[\h]])^{\mf h}$, $\mu_n \in \h^{n-1} \mf h^*[[\h]] $, such that if $\Psi^{(0)}=\Psi$ and
\[
 \Psi^{(n)}=G_n\star\Psi^{(n-1)}_{\mu_n}
\]
then $\Psi^{(n)} \rightarrow \Psi'$ in the $\h$-adic topology of $U(\mf h)\ot U(\mf g)^{\ot 2}[[\h]]$. We construct this sequence recursively by applying Proposition~\ref{prop:main} to the pair $(\Psi^{(n-1)},\Psi')$.

Hence, thanks to the properties of the $(G_n,\mu_n)$, the product
\[
\bar G=\prod^>_{n\geq 1}(G_n)_{\sum_{k>n}\mu_k}
\]
converges in $U(\mf h)\ot U(\mf g)[[\h]]$, belongs to $U(\mf h)[[\h]]' \hat \ot U(\mf g)[[\h]]$ (since this is a closed subspace of the latter for the $\h$-adic topology) and is invertible and $\mf h$-invariant, and the sum
\[
\bar\mu=\sum_{k\geq 1}\mu_k
\]
converges in $\mf h^*[[\h]]$. Finally, a direct computation shows that
\[
 \Psi'=\bar G\star\Psi_{\bar\mu}
\]
implying the conclusion of the Theorem.

Let us recall some facts about the cohomological structure associated to the mixed pentagon equation. We define the cochain complex
\[
 C_1 =\bigoplus_{n\geq 0} (U(\hh)\ot U(\mf g)^{\ot n})^{\hh}
\]
with differential
\[
 \begin{array}{rccl}
 d_1^{n,n+1}:&U(\mf h) \ot U(\mf g)^{\ot n}& \longrightarrow & U(\mf h) \ot U(\mf g)^{\ot n+1} \\
 &x&\longmapsto & x^{1,2,\dots,n+1}+\displaystyle{\sum_{i=1}^{n+1}} (-1)^i x^{1,2,\dots,ii+1,\dots,n+2}
\end{array}
\]
Let $C_2=\bigoplus (S(\mm)^{\ot n})^{\hh}$ together with the usual coHochschild differential
\begin{equation}\label{eq:coHochschild}
 \begin{array}{rccl}
 d_2^{n,n+1}:&S(\mf m)^{\ot n}& \longrightarrow & S(\mf m)^{\ot n+1}\\
 &x&\longmapsto & x^{1,2,\dots,n}+\displaystyle{\sum_{i=1}^{n}} (-1)^i x^{1,2,\dots,ii+1,\dots,n+1} + (-1)^{n+1} x^{2,3,\dots,n+1}
\end{array}
\end{equation}

The inclusion $\mf m \subset \mf g$ extend to an inclusion $S(\mf m) \subset S(\mf g)$. Recall that $S(\mf g)$ and $U(\mf g)$ are isomorphic as coalgebras. Thus, as $d_2$ only involves the coalgebra structures, $(C_2,d_2)$ can be embedded into $(C_1,d_1)$. The cohomology of the complex $(C_1,d_1)$ was computed by D. Calaque in the general case of a reductive pair (which is the case here):
\begin{thm}[\cite{Calaque2006}]\label{thm:damien}
 \begin{enumerate}
 \item There exists an $\hh$-equivariant projection $P:U(\mf g) \rightarrow S(\mm)$ restricting to the projection $\mf g \rightarrow \mf m$ along $\mf h$ and such that
\[
 \epsilon \ot P^{\ot n}:(U(\mf h) \ot U(\mf g)^{\ot n})^{\mf h} \longrightarrow (S(\mf m)^{\ot n})^{\mf h}
\]
induces an isomorphism at the level of cohomology.
\item Moreover, there exists a linear map $\kappa:C_1 \rightarrow C_1[-1]$ such that:
\begin{itemize}
\item $\kappa d_1+d_1\kappa = \id - (\epsilon \ot P^{\ot n})$
\item $\kappa(U(\mf h)_{\leq k} \ot U(\mf g)^{\ot n})\subset U(\mf h)_{\leq k+1} \ot U(\mf g)^{\ot n-1}$
\end{itemize}
\end{enumerate}
\end{thm}
Here $U(\mf h)_{\leq n}$ is the subspace of $U(\mf h)$ of elements of degree at most $n$ in the generators $h_1,\dots,h_r$.
\begin{cor}
 The $n$th cohomology group $H^n(C_1, d_1)$ is isomorphic to $(\wedge^n \mm)^{\hh}$.
\end{cor}

If $V$ is any vector space, define the operator $\alt_n:V^{\ot n}\rightarrow \wedge^n V \subset V^{\ot n}$ by
\[
\alt_n(v_1 \ot \dots \ot v_n ) = \sum_{\sigma \in S_n} \operatorname{sgn}(\sigma) (v_{\sigma(1)} \ot \dots \ot v_{\sigma(n)})
\]
The following Corollary will also be useful:
\begin{cor}
\label{cor:cobord}
 The linear map $\alt_n \circ (\epsilon\ot P^{\ot n}):U(\mf h) \ot U(\mf g)^{\ot n}\rightarrow S(\mf m)^{\ot n}$ maps $\ker d_1^{n,n+1}$ onto $(\wedge^n \mf m)^{\mf h}$ and $\im d_1^{n-1,n}$ onto 0.
\end{cor}
\begin{proof}
 According to~\cite{Drinfeld1990} the $\alt_n$ operator induces a linear isomorphism 
\[
 \alt_n:\frac{\ker d_2^{n,n+1}}{\im d_2^{n-1,n}} \longrightarrow (\wedge^n \mf m)^{\mf h}
\]
and according to~Theorem~\ref{thm:damien}, the map $\epsilon \ot P^{\ot n}$ induces a linear isomorphism
\[
\epsilon\ot P^{\ot n} : \frac{\ker d_1^{n,n+1}}{\im d_1^{n-1,n}}  \longrightarrow \frac{\ker d_2^{n,n+1}}{\im d_2^{n-1,n}} 
\]
Hence, the composition of these two maps is a linear isomorphism
\[
\frac{\ker d_1^{n,n+1}}{\im d_1^{n-1,n}}  \longrightarrow (\wedge^n \mf m)^{\mf h}
\]
\end{proof}

Let $\cyb(x,y)$ be the bilinear map $\mf g^{\ot 2} \times \mf g^{\ot 2} \rightarrow \mf g^{\ot 3}$
\[
 \cyb(x,y)=[x^{1,2},y^{1,3}]+[x^{1,3},y^{2,3}]+[x^{1,2},y^{2,3}]+[y^{1,2},x^{1,3}]+[y^{1,3},x^{2,3}]+[y^{1,2},x^{2,3}]
\]
and define $\cyb(x)=\frac12 \cyb(x,x)$.
\begin{lem}\label{lem:diagCYB}
There exists a bilinear map
\[
\bcyb:(\wedge^2 \mf m)^{\mf h}\times (\wedge^2 \mf m)^{\mf h}\rightarrow (\wedge^3 \mf m)^{\mf h}
\]
fitting in the following diagram
\begin{center}
 \begin{tikzpicture}
\matrix (m) [matrix of math nodes, row sep=5em,
column sep=3em, text height=1.5ex, text depth=0.25ex]
{\wedge^2 \mf g \times \wedge^2 \mf g & (\wedge^2 \mf g)^{\mf h}\times (\wedge^2 \mf g)^{\mf h}& (\wedge^2 \mf m)^{\mf h}\times (\wedge^2 \mf m)^{\mf h}\\
\wedge^3 \mf g&(\wedge^3 \mf g)^{\mf h}&(\wedge^3 \mf m)^{\mf h}\\} ;
\path[left hook->,font=\scriptsize]
(m-1-2) edge node[auto] {} (m-1-1)
(m-2-2) edge node[auto] {} (m-2-1);
\path[->>,font=\scriptsize]
(m-1-2) edge node[below] {} (m-1-3)
(m-2-2) edge node[below] {} (m-2-3)
;
\path[<-,font=\scriptsize]
(m-2-1) edge node[above,sloped] {$\cyb$} (m-1-1)
(m-2-2) edge node[above,sloped] {$\cyb$} (m-1-2)
(m-2-3) edge node[above,sloped] {$\bcyb$} (m-1-3)
;
\end{tikzpicture}
\end{center}
\end{lem}
\begin{proof}
The commutativity of the left square is clear.

$\cyb$ maps $(\wedge^2 \mf g)^{\mf h}\times (\mf h\wedge \mf g)^{\mf h}$ to $(\mf h \wedge \bigwedge^2 \mf g)^{\mf h}$, meaning that $\cyb$ induces a bilinear map
\[
\frac{(\wedge^2 \mf g)^{\mf h}}{(\mf h \wedge \mf g)^{\mf h}}\times \frac{(\wedge^2 \mf g)^{\mf h}}{(\mf h \wedge \mf g)^{\mf h}} \rightarrow \frac{(\wedge^3 \mf g)^{\mf h}}{(\mf h \wedge \bigwedge^2 \mf g)^{\mf h}}
\]
As the decomposition $\mf g=\mf h \oplus \mf m$ is reductive, 
\begin{equation}\label{eq:quoCyb}
\wedge^2\mf g\cong\wedge^2\mf h \oplus (\mf h \ot \mf m) \oplus \wedge^2 \mf m.
\end{equation}

Hence, 
\[
\frac{(\wedge^2 \mf g)^{\mf h}}{(\mf h \wedge \mf g)^{\mf h}}\cong(\wedge^2 \mf m)^{\mf h}
\]
In the same way,
\[
\frac{(\wedge^3 \mf g)^{\mf h}}{(\mf h \wedge \bigwedge^2 \mf g)^{\mf h}}\cong(\wedge^3 \mf m)^{\mf h}
\]
Composing~\eqref{eq:quoCyb} with these isomorphisms, we obtain a map
\[
\bcyb:(\wedge^2 \mf m)^{\mf h}\times (\wedge^2 \mf m)^{\mf h}\longrightarrow (\wedge^3 \mf m)^{\mf h}
\]
which makes the right diagram commutes.
\end{proof}
\begin{proof}[Proof of Prop.~\ref{prop:main}] 
Let
\[
 \Psi= \sum_{k\geq 0} \hbar^k \Psi_k,\ \Psi_k \in U(\mf h) \ot U(\mf g)^{\ot 2}
\]
and
\[
 \Psi'= \sum_{k\geq 0} \hbar^k \Psi'_k,\ \Psi'_k \in U(\mf h) \ot U(\mf g)^{\ot 2}
\]
Assume that there exist $n>1$ such that for all $1\leq k < n$ $\Psi_k=\Psi'_k$. The mixed pentagon equation~\eqref{eq:mixedPenta} implies that $d_1^{2,3}(\Psi_n-\Psi_n')=0$. Hence, there exists $a \in (\wedge^2 \mf m)^{\mf h}$ and $\mf g \in (U(\mf h) \ot U(\mf g))^{\mf h}$ such that
\[
\Psi_n-\Psi'_n=1\ot a +d_1^{1,2}(g)
\]
Both $\Psi_n$ and $\Psi_n'$ belongs to $U(\mf h)_{\leq n-1}\ot U(\mf g)^{\ot 2}$ by assumption, then so does $d_1^{1,2}( g)$. Hence, according to Theorem~\ref{thm:damien}, $g$ can be chosen is such a way that it belongs to $U(\mf h)_{\leq n}\ot U(\mf g)$. Moreover, the fact that $d_1^{1,2}(g)\in U(\mf h)_{\leq n-1}\ot U(\mf g)^{\ot 2}$ implies that the $U(\mf h)$-degree $n$ part $g_n$ of $g$ satisfies
\[
g_n^{1,23}-g_n^{1,2}-g^{1,3}=0
\]
because $g_n^{12,3}=g_n^{1,3}+$\{lower degree terms w.r.t. the $U(\mf h)$ component\}. Hence, $g_n$ actually belongs to $U(\mf h)\ot  g$, meaning that $G=\exp(\h^n g) \in \cg$. Thus, let
\[
\Psi''=G\star \Psi'.
\]
By construction, $\Psi-\Psi'' \in \h^{n-1}U(\mf h)\ot U(\mf g)^{\ot 2}$ and
\begin{equation}\label{eq:obstruction}
\Psi_n-\Psi''_n=1\ot a
\end{equation}

Set
\[
\bar r= \alt_2(\rho_0)=2\rho_0 \in  (\wedge^2 \mf m)^{\mf h}
\]

\begin{lem}
\label{lem:cyb}

\[
 \bcyb(\overline{r},a)=0 \in (\wedge^3 \mf m)^{\mf h}
\]
\end{lem}
\begin{proof}
Expanding the mixed pentagon equation up to order $n+1$ leads to:
\[
 d_1^{2,3}(\Psi_{n+1}-\Psi''_{n+1}) = \rho_0^{2,34}a^{3,4} + a^{2,34}\rho_0^{3,4} - \rho_0^{23,4}a^{2,3}- a^{23,4}\rho_0^{2,3} \in U(\mf h)\ot U(\mf g)^{\ot 3}
\]

Set $\psi_{n+1}=(\epsilon \ot \alt_3)( d(\Psi_{n+1}-\Psi''_{n+1}))$ we have:
\[
 \alt_3(\rho_0^{1,23}a^{2,3} + a^{1,23}\rho_0^{2,3} - \rho_0^{12,3}a^{1,2}- a^{12,3}\rho_0^{1,2})=\psi_{n+1} 
\]

 Each component of $a$ and $\rho_0$ are primitive, that is
 \begin{align}
  a^{12,3}&=a^{1,3}+a^{2,3}\\
  a^{1,23}&=a^{1,2}+a^{1,3}\\
  \rho_0^{12,3}&=\rho_0^{1,3}+\rho_0^{2,3}\\
  \rho_0^{1,23}&=\rho_0^{1,2}+\rho_0^{1,3}
 \end{align}
It follows that:
\begin{align*}
 &\rho_0^{1,2}a^{2,3}+\rho_0^{1,3}a^{2,3} + a^{1,2}\rho_0^{2,3}+a^{1,3}\rho_0^{2,3} - \rho_0^{1,3}a^{1,2}-\rho_0^{2,3}a^{1,2}- a^{1,3}\rho_0^{1,2}-a^{2,3}\rho_0^{1,2} \\
 - &\rho_0^{2,1}a^{1,3}-\rho_0^{2,3}a^{1,3} - a^{2,1}\rho_0^{1,3}-a^{2,3}\rho_0^{1,3} + \rho_0^{2,3}a^{2,1}+\rho_0^{1,3}a^{2,1}+ a^{2,3}\rho_0^{2,1}+a^{1,3}\rho_0^{2,1} \\
 -&\rho_0^{3,2}a^{2,1}-\rho_0^{3,1}a^{2,1} - a^{3,2}\rho_0^{2,1}-a^{3,1}\rho_0^{2,1} + \rho_0^{3,1}a^{3,2}+\rho_0^{2,1}a^{3,2}+ a^{3,1}\rho_0^{3,2}+a^{2,1}\rho_0^{3,2} \\
 - &\rho_0^{1,3}a^{3,2}-\rho_0^{1,2}a^{3,2} - a^{1,3}\rho_0^{3,2}-a^{1,2}\rho_0^{3,2} + \rho_0^{1,2}a^{1,3}+\rho_0^{3,2}a^{1,3}+ a^{1,2}\rho_0^{1,3}+a^{3,2}\rho_0^{1,3} \\
 +&\rho_0^{2,3}a^{3,1}+\rho_0^{2,1}a^{3,1} + a^{2,3}\rho_0^{3,1}+a^{2,1}\rho_0^{3,1} - \rho_0^{2,1}a^{2,3}-\rho_0^{3,1}a^{2,3}- a^{2,1}\rho_0^{2,3}-a^{3,1}\rho_0^{2,3} \\
 +&\rho_0^{3,1}a^{1,2}+\rho_0^{3,2}a^{1,2} + a^{3,1}\rho_0^{1,2}+a^{3,2}\rho_0^{1,2} - \rho_0^{3,2}a^{3,1}-\rho_0^{1,2}a^{3,1}- a^{3,2}\rho_0^{3,1}-a^{1,2}\rho_0^{3,1} \\
 =&\psi_{n+1}
\end{align*}
As $a^{2,1}=-a^{1,2}$, it implies that:
\begin{align*}
\psi_{n+1}&=[\rho_0^{1,2},a^{2,3}]+[\rho_0^{1,3},a^{2,3}] + [a^{1,2},\rho_0^{2,3}]+[a^{1,3},\rho_0^{2,3}] - [\rho_0^{1,3},a^{1,2}] - [a^{1,3},\rho_0^{1,2}]\\
  &-[\rho_0^{2,1},a^{1,3}] +[a^{2,3},\rho_0^{2,1}] + [\rho_0^{3,2},a^{1,2}]+[\rho_0^{3,1},a^{1,2}] - [\rho_0^{3,1},a^{2,3}] - [a^{1,3},\rho_0^{3,2}]
\end{align*}
that is:
\begin{align*}
 \psi_{n+1}=&[\rho_0^{1,2}-\rho_0^{2,1},a^{2,3}]+[\rho_0^{1,3}-\rho_0^{3,1},a^{2,3}] + [a^{1,2},\rho_0^{2,3}-\rho_0^{3,2}]\\
 +&[a^{1,3},\rho_0^{2,3}-\rho_0^{3,2}] + [a^{1,2},\rho_0^{1,3}-\rho_0^{3,1}] + [\rho_0^{1,2}-\rho_0^{2,1},a^{1,3}]\\
 =&\cyb(\overline{r},a)
\end{align*}
because $\overline{r}=\rho_0^{1,2}-\rho_0^{2,1}$. The projection of $\psi_{n+1}$ in $(\wedge^3 \mf m)^{\mf h}$ is 0 by Corollary~\ref{cor:cobord}. Hence, as both $\bar r$ and $a$ belong to $(\wedge^2 \mf m)^{\mf h}$, one has:
\[
P^{\ot 3}(\cyb(\bar r,a))=\bcyb(\bar r,a)=0
\]
by Lemma~\ref{lem:diagCYB}.
\end{proof}

Let $J\in \widehat{S(\mf h)}\ot U(\mf g)^{\ot 2}$ be the image of $\frac{\Psi-1}{\h}$ through the tensor product of the reduction maps $U(\mf h)[[\h]]' \rightarrow \widehat{S(\mf h)}$ and $U(\mf g)[[\h]] \rightarrow U(\mf g)$. The coproduct of $U(\mf h)[[\h]]$ induces an algebra map
\[
\Delta:\widehat{S(\mf h)}[[\h]]\rightarrow \widehat{S(\mf h)}\ot U(\mf g)[[\h]]
\]
defined by
\[
u_i \longmapsto u_i\ot 1+1\ot \h h_i
\]
The reduction of the mixed pentagon equation implies that $J$ satisfies
\[
J^{1,3,4}+J^{1,2,34}-J^{1,23,4}-J^{1,2,3}=0
\]
because the reduction of $\frac{1\ot \Phi_{\kz}-1}{\h}$ in $\widehat{S(\mf h)} \ot U(\mf g)^{\ot 3}$ is 0, and using the fact that
\[
J^{12,3,4}=J^{1,3,4}+O(\h).
\]
It means that $(\id \ot b^{2,3})(J)=0$ where $b$ is the differential of the coHochschild cochain complex $\bigoplus_{n\geq 0} U(\mf g)^{\ot n}$ defined by the same formula as in~\eqref{eq:coHochschild}. According to~\cite[Prop.~2.2]{Drinfeld1990}, the linear operator $\alt_n:U(\mf g)^{\ot n}\rightarrow U(\mf g)^{\ot n}$ maps cocycles into $\wedge^n \mf g$. Hence,
\[
\rho=(\id \ot \alt_2)(J) \in \widehat{S(\mf h)} \ot \wedge^2 \mf g
\]
Let $Z=\alt_3(\frac{\Phi_{\kz}-1}{\h^2}) \mod \h$ (recall~\cite{Drinfeld1990} that $\Phi_{\kz}=1+O(\h^2)$). A direct consequence of the mixed pentagon equation is the following~\cite{Enriquez2003,Xu2002}:
\begin{prop}
$\rho$ satisfies the modified Dynamical Yang-Baxter Equation (mCDYBE)
\begin{equation}
\label{eq:DCYBE}
(\id \ot  \cyb)(\rho) + (\id\ot \widetilde\alt)(d\rho) =1\ot Z \in \widehat{S(\mf h)}\ot (\wedge^3 \mf g)^{\mf g}
\end{equation}
where $d:\widehat{S(\mf h)}\ot \mf g^{\ot 2}\rightarrow \widehat{S(\mf h)} \ot \mf h \ot \mf g^{\ot 2}$ is the formal de~Rham differential and 
\[
\widetilde\alt(x)=x^{1,2,3}-x^{2,1,3}+x^{3,1,2}
\]
\end{prop}
As a consequence~\cite{Etingof2001e}, $\bar r$ satisfies the modified Classical Yang-Baxter Equation
\begin{equation}
\label{eq:CYBE}
 \bcyb(\bar{r})=\overline{Z} \in \wedge^3 \mf m
\end{equation}
where $x\mapsto \overline{x}$ is the projection $(\wedge^n \mf g)^{\mf h} \rightarrow (\wedge^n \mf m)^{\mf h}$. Let $\varepsilon$ be a formal variable, we have the following:
\begin{lem}
$\overline{r}+\varepsilon a$ is an infinitesimal deformation of $\overline{r}$, i.e. 
 \[
  \bcyb(\overline{r}+\varepsilon a)=\bar{Z}  \in (\wedge^3 \mf m)^{\mf h}[\varepsilon]/(\varepsilon^2)
 \]
\end{lem}
\begin{proof}
 As
\[
 \bcyb(r+\varepsilon  a)=\bcyb(r)+2\varepsilon \bcyb(r,a) \mod \varepsilon^2
\]
it follows from lemma~\ref{lem:cyb} and from the fact that
\[
 \bcyb(\overline r)=\overline{Z}
\]
that $\bcyb(\overline r+\varepsilon a)=\overline{Z} \mod \varepsilon^2$.
\end{proof}

Solution of the mCDYBE in the (semi-)simple case were classified by Etingof--Varchenko~\cite[Theorem 3.10]{Etingof1998b}. It implies that $\rho$ is actually the Taylor expansion around the origin of a holomorphic function
\[
r:D \rightarrow (\wedge^2 \mf g)^{\mf h}
\]
where $D \subset \mf h^*$ is a neighbourhood of 0. Moreover, if
\[
r(\lambda)=r_{\mf h}(\lambda) +\sum_{\alpha \in R} \phi_{\alpha}(\lambda) e_{\alpha} \ot e_{-\alpha}
\]
the assumption (b) of Theorem~\ref{thm:shift} means that $\phi_{\alpha}(0) \neq \pm 1$ which implies that there exists $\nu \in \mf h^*$ such that
\[
\forall \alpha \in R,\ \phi_{\alpha}(\lambda)= \coth(2(\alpha,\lambda-\nu))
\]
An infinitesimal shift of $r(\lambda)$ by an element $\mu$ of $\mf h^*$ induces an infinitesimal deformations of it as a solution of the mCDYBE:
\[
r(\lambda + \varepsilon \mu)=r(\lambda)+\varepsilon(\mu \ot \id^{\ot 2})(dr(\lambda))+O(\varepsilon^2)
\]
Evaluating the above expression in 0 and projecting in $(\wedge^2 \mf m)^{\mf h}$, it follows that $\mu$ also induces an infinitesimal deformation of $\overline{r}$ as a solution of the (modified) Classical Yang-Baxter equation in $(\wedge^2 \mf m)^{\mf h}$. This define a linear map $\theta:\mf h^* \rightarrow \mathcal T_{def}(\bar r)$ by
\[
\mu\mapsto \overline{(\mu \ot \id\ot \id)(dr(0))}
\]
where $\mathcal T_{def}(\bar r)$ is the vector space\footnote{It is a vector space because the map $a \mapsto \cyb(\overline r,a)$ is a linear map.} of infinitesimal deformation of $\bar{r}$, that is:
\[
 \mathcal T_{def}(\bar r)=\{x \in (\wedge^2 \mf m)^{\mf h}\ \vert\ \bcyb(\bar{r},x)=0 \}
\] 
\begin{prop}
The linear map $\theta$ is an isomorphism.
\end{prop}
\begin{proof}
Let us first prove that $\dim \mathcal T_{def}(\bar r)\leq \dim \mf h^*$. For $\alpha \in R$, let $r_{\alpha}=\phi_{\alpha}(0)$, by construction
\[
\bar r=\sum_{\alpha \in R} r_{\alpha} e_{\alpha}\ot e_{-\alpha}
\]
and $r_{-\alpha}=-r_{\alpha}$ by antisymmetry. The only nontrivial parts of $\bcyb(\bar r,a)$ are those belonging to $\mf g_{\alpha} \ot\mf g_{\beta} \ot\mf g_{\gamma}$ with $\alpha+\beta+\gamma=0$, because $[e_{\alpha},e_{-\alpha}]=0$ in $\mf m=\mf g/\mf h$. 

Write
\[
a=\sum_{\alpha\in R}a_{\alpha}e_{\alpha}\ot e_{-\alpha}
\]
with $a_{-\alpha}=-a_{\alpha}$. The fact that $\bcyb(\bar r,a)=0$ leads to the following relation:
\begin{equation}\label{eq:DefoCYB}
 a_{\alpha+\beta} (r_{\alpha} +r_{\beta})=a_{\alpha} (r_{\beta} - r_{\alpha+\beta}) + a_{\beta} (r_{\alpha} - r_{\alpha+\beta})
\end{equation}

Following again~\cite{Etingof1998b}, the mCDYBE for $r(\lambda)$ implies the following relation in $\mf g_{\alpha} \ot\mf g_{\beta} \ot\mf g_{-\alpha-\beta}$, $\forall \alpha,\beta\neq 0, \alpha+\beta \in R$:
\[
r_{\alpha}r_{\beta}-r_{\alpha}r_{\alpha+\beta}-r_{\beta}r_{\alpha+\beta}+1=0
\]
Hence, if $r_{\alpha}+r_{\beta}$ were equal to 0, it would imply that $r_{\alpha}^2=1$, hence that $r_{\alpha}=\pm 1$ contradicting assumption (b) of Theorem~\ref{thm:shift}. Thus, if $a_{\alpha}=0$ for any $\alpha \in \Pi$, then $a_{\alpha}=0$ for all $\alpha \in R$. It means that the linear map
\[
\mathcal T_{def} \rightarrow \C^{\dim \mf h^*}
\]
defined by
\[
(a_{\alpha})_{\alpha \in R^+} \longmapsto (a_{\alpha})_{\alpha \in \Pi}
\]
is injective. Hence, $\dim \mathcal T_{def}(\bar r)\leq\dim \mf h^*$.

Observe now that for $\alpha \in R$, $d\phi_{\alpha}(0)$ is the linear map $\mf h^* \rightarrow \C$ defined by 
\[
\mu \longmapsto - 2(\alpha,\mu)\csch^2(2(\alpha,\nu)) 
\]
where $\csch(z)=\frac2{e^z-e^{-z}}$ is the hyperbolic cosecant. Hence, let 
\[
\bar r(\lambda)=\sum_{\alpha \in R}\phi_{\alpha}(\lambda)e_{\alpha}\ot e_{-\alpha}
\]
be the function $D\rightarrow (\wedge^2 \mf m)^{\mf h}$ induced by $r(\lambda)$. Let $\mu_0 \in \mf h^*$ be such that $(\mu_0 \ot \id^{\ot 2})(d\bar r(0))=0$. As $\csch^2(z)\neq 0$ for all $z\in \C$, it implies that
\[
\forall \alpha \in R^+,\ (\alpha,\mu_0)=0
\]
and thus that $\mu_0=0$ because $(\ ,\ )$ is non-degenerate. It follows that the map $\theta$ is injective, hence an isomorphism.
\end{proof}
Hence, let $\mu_a \in \mf h^*$ be such that $\theta(\mu_a)=a$ and set $\tilde\mu_a =\h^{n-2} \mu_a \in \mf h^*[[\h]]$. By definition, 
\[
(\id\ot \alt_2)(\Psi_2)=dr(0)+1\ot U(\mf g)^{\ot 2}
\]
Hence, $(\Psi_{\tilde\mu_a})_k=(\Psi)_k$ for $k<n$ and as shifts act trivially on $1\ot U(\mf g)^{\ot 2}$, 
\[
\alt_2\circ (\epsilon \ot P^{\ot 2})(\Psi_{\tilde\mu_a})_n = \alt_2\circ (\epsilon \ot P^{\ot 2})(\Psi_n)+a
\]
Using equation~\eqref{eq:obstruction}, it means that 
\[
\alt_2\circ (\epsilon \ot P^{\ot 2})((\Psi_{\tilde\mu_a})_n-\Psi''_n)=0
\]
and thus that $(\Psi_{\tilde\mu_a})_n$ and $\Psi'_n$ are cohomologous. Hence, there exists $g \in (U(\mf h)\ot U(\mf g))^{\mf h}$ such that
\[
 \Psi'_n=(\Psi_{\tilde\mu_a})_n +d_1(g)
\]
One shows as before that $G=\exp(\h^ng) \in \cg$, and by construction
\[
\Psi'=G\star \Psi_{\tilde\mu_a} \mod \h^{n+1}
\]
The Proposition is proved.
\end{proof}
The proof of the Theorem then follows from an induction on $n$.
\end{proof}
\subsection{Equivalence}

\begin{prop}
\label{prop:rigid}
 If moreover $\Psi,\Psi'$ satisfy the octagon equation with $E_{\kz,\sigma}$, then they are actually twist equivalents.
\end{prop}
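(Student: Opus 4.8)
The plan is to upgrade Theorem~\ref{thm:shift}, which already provides $(G,\mu)\in\cg\rtimes\mf h^*[[\h]]$ with $\Psi'=G\star\Psi_\mu$; it therefore suffices to show that the shift is trivial, $\mu=0$. Granting this, $\Psi'=G\star\Psi$, and I claim that $(1,G)$ is then a legitimate twist between the two QRA. Indeed, every $G\in\cg$ has its $U(\mf g)$-leg in $U(\mf g)^{\mf h}$, the centralizer of $\mf h$ in $U(\mf g)$; this subalgebra is fixed pointwise by $\sigma=\sigma_\nu$ (which acts as a scalar on each $\mf h$-weight subspace, that scalar being $1$ in weight zero) and commutes with $\mf h$, hence with $\tth^{1,2}$ and $\tth^{2,2}$. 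So $G$ commutes with $E_{\kz,\sigma}=e^{\h(\tth^{1,2}+\frac12\tth^{2,2})}(1\ot\sigma)$ and with $\Delta_0(U(\mf h))$. Since the $F$-part of a gauge transformation is trivial, $\R_{\kz},\Phi_{\kz},\Delta_0$ are unchanged, so the twist $(1,G)$ carries $B_{\kz}=(U(\mf h)[[\h]],\Delta_0,E_{\kz,\sigma},\Psi_{\kz,\sigma})$ to $(U(\mf h)[[\h]],\Delta_0,E_{\kz,\sigma},G\star\Psi_{\kz,\sigma})=B$. Combined with the twist $(F,1)$ identifying $B_{\alg}$ with $B$, this proves Theorem~\ref{thm:main}.

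To prove $\mu=0$, first record the effect of the shift. Put $\bar\mu:=(\mu\ot\id)(\tth)\in\mf h[[\h]]$; the map $\mu\mapsto\bar\mu$ is injective by nondegeneracy of $(\,,\,)$, cf.~\eqref{eq:rho-t}. Then $\sh_\mu$ commutes with $\Delta_0$, fixes $\R_{\kz}$ (which has no leg on the $U(\mf h)[[\h]]'$-factor), and sends $E_{\kz,\sigma}$ to $e^{\h\bar\mu^{(2)}}E_{\kz,\sigma}$, where $\bar\mu^{(2)}$ sits in the $U(\mf g)$-leg of $E$. Applying $\sh_\mu\ot\id^{\ot2}$ to the octagon equation~\eqref{eq:octo} and to the mixed pentagon~\eqref{eq:mixedPenta} satisfied by $(\Psi_{\kz,\sigma},E_{\kz,\sigma},\R_{\kz})$ shows that $\Psi_\mu$ is a dynamical pseudo-twist over $A_{\kz}$ and that $(\Psi_\mu,e^{\h\bar\mu^{(2)}}E_{\kz,\sigma},\R_{\kz})$ satisfies the octagon equation. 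Now twist by $(1,G)$: as $G$ commutes with $E_{\kz,\sigma}$ and with $e^{\h\bar\mu}$ (since $\bar\mu\in\mf h$), the $E$-element is left unchanged while $\Psi_\mu\mapsto G\star\Psi_\mu=\Psi'$, so $\Psi'$ satisfies the octagon equation with $e^{\h\bar\mu^{(2)}}E_{\kz,\sigma}$ as well as, by hypothesis, with $E_{\kz,\sigma}$.

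It remains to extract $\mu=0$. Each of the two octagon equations for $\Psi'$ yields, through~\eqref{eq:SystOct}, the ABRR equation~\eqref{eq:algABRR}: with $E_{\kz,\sigma}^{1,2}$ and with $e^{\h\bar\mu^{(2)}}E_{\kz,\sigma}^{1,2}$. Since $e^{\h\bar\mu^{(2)}}$ commutes with $E_{\kz,\sigma}^{1,2}$, combining the two instances and cancelling the invertible factor $(\bar\R^{2,3})^{-1}E_{\kz,\sigma}^{1,2}$ on the left gives $[e^{\h\bar\mu^{(2)}},\Psi']=0$, hence $[\bar\mu^{(2)},\Psi']=0$; the $\mf h$-invariance of $\Psi'$, which gives $[\bar\mu^{(2)}+\bar\mu^{(3)},\Psi']=0$, then forces $[\bar\mu^{(3)},\Psi']=0$. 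Substituting this back into~\eqref{eq:algABRR} for $(\Psi',E_{\kz,\sigma})$, and using that $\bar\mu^{(3)}$ commutes with $E_{\kz,\sigma}^{1,2}$ and with $\Psi'$, one obtains $[\bar\mu^{(3)},(\bar\R^{2,3})^{-1}]=0$, i.e. $[\bar\mu^{(3)},\bar\R^{2,3}]=0$. Writing $\bar\R=\sum_{m\in\N^r}\bar\R^{[1]}_m\ot\bar\R^{[2]}_m$ with $\bar\R^{[2]}_m$ of $\mf h$-weight $-\sum_i m_i\alpha_i$, and using that $\bar\R_m\neq0$ for every $m\in\N^r$, this gives $\bigl(\sum_i m_i\alpha_i\bigr)(\bar\mu)=0$ for all $m$; taking $m=\delta_i$ yields $\alpha_i(\bar\mu)=0$ for every simple root, so $\bar\mu=0$ and $\mu=0$.

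The argument is essentially formal once the conventions are pinned down; the points that demand care are the index bookkeeping — in particular that the shift acts on $E_{\kz,\sigma}$ by left multiplication by $e^{\h\bar\mu^{(2)}}$ and that $G\in\cg$ genuinely centralizes $E_{\kz,\sigma}$ — together with the standard fact, used in the last step, that all homogeneous components $\bar\R_m$ of the universal $R$-matrix are non-zero.
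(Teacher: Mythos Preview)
Your overall plan is the paper's: invoke Theorem~\ref{thm:shift}, argue that an $\mf h$-invariant $G\in\cg$ commutes with $E_{\kz,\sigma}$, and reduce everything to proving $\mu=0$. That part is fine. The problem is in how you extract $\mu=0$.

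You appeal to the system~\eqref{eq:SystOct} and to the ABRR equation~\eqref{eq:algABRR}, but those results were proved for the \emph{algebraic} QRA over $U_\h(\mf g)$: the equivalence ``octagon $\Leftrightarrow$ system~\eqref{eq:SystOct}'' in the proposition preceding Theorem~\ref{thm:OctABRR} uses crucially that $\Psi\in U_\h(\mf h)\hat\ot U_\h(\mf b^+)\hat\ot U_\h(\mf n^-)$ and that $\R_\h=K\bar\R$ with $\bar\R\in U_\h(\mf n^+)\hat\ot U_\h(\mf n^-)$, so that the two sides of the rewritten octagon live in complementary triangular pieces and are forced to equal their degree $(0,0)$ parts. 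In the KZ setting where Proposition~\ref{prop:rigid} is stated, $\Psi,\Psi'\in U(\mf h)[[\h]]'\hat\ot U(\mf g)^{\ot2}[[\h]]$ have no triangular structure, $\R_{\kz}=e^{\h t/2}$ does not factor as $K\bar\R$, and there is no $\bar\R$ with nonzero graded components to feed into your last step. So the chain ``octagon $\Rightarrow$ ABRR $\Rightarrow [\bar\mu^{(3)},\bar\R^{2,3}]=0$'' is not available here. (Even in the algebraic setting, Theorem~\ref{thm:OctABRR} only shows ABRR $\Rightarrow$ octagon, not the converse you use.)

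The paper avoids this by working directly with the two octagon equations for $\Psi$ (with $E_{\kz,\sigma}$ and with $E_\mu=e^{\h\check\mu^{(2)}}E_{\kz,\sigma}$) in their raw form: equating the right-hand sides yields
\[
\Psi^{-1}e^{\h t^{2,3}/2}\Psi^{1,3,2}=e^{\h\check\mu^{(3)}}\,\Psi^{-1}e^{\h t^{2,3}/2}\Psi^{1,3,2}\,e^{-\h\check\mu^{(3)}},
\]
and expanding to lowest nontrivial order in $\h$ gives $[\check\mu^{(3)},\Psi_1^{1,3,2}-\Psi_1+1\ot t_{\mf m}/2]=0$. Writing $\Psi_1^{1,3,2}-\Psi_1+1\ot t/2=1\ot\sum_{\alpha\in R}\lambda_\alpha e_\alpha\ot e_{-\alpha}$, antisymmetry plus the symmetric contribution $t_{\mf m}/2$ force $\lambda_\alpha+\lambda_{-\alpha}=1$, so for each $\alpha$ one of $\lambda_{\pm\alpha}$ is nonzero; the commutator then gives $\alpha(\check\mu)=0$ for all $\alpha\in R$, hence $\check\mu=0$ and $\mu=0$. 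This argument uses only the octagon equation and the shape of $\Psi_1$, and does not rely on any triangular/ABRR machinery.
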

\begin{proof}
 According to theorem~\ref{thm:shift}, we can assume that there exists $\mu \in \mf h^*[[\h]]$ such that 
\[
\Psi=\Psi'_{\mu}
\]
Let $E_{\mu}$ be the image of $E_{\kz,\sigma}$ by the shift by $\mu$. Hence, $E_{\mu}=e^{\h \check \mu^{(2)}}E_{\kz}$ where $\check \mu=(\mu \ot \id)(t_{\mf h})\in \mf h[[\h]]$. Then, on the one hand, $\Psi$ satisfies the octagon equation with $E_{\kz}$ by definition. On the other hand, as $\Psi=\Psi'_{\mu}$ and because $\Psi'_{\mu}$ satisfies the octagon equation with $E_{\mu}$, $\Psi$ has to satisfies the octagon equation with $E_{\mu}$ too. Hence,
\[
\Psi^{-1}e^{-\h t^{2,3}/2} \Psi^{1,3,2} =(E_{\kz,\sigma}^{12,3})^{-1}\Psi^{-1}e^{\h t^{2,3}/2} \Psi^{1,3,2}E_{\kz,\sigma}^{1,3}
\]
and
\[
\Psi^{-1}e^{-\h t^{2,3}/2} \Psi^{1,3,2} =(E_{\mu}^{12,3})^{-1}\Psi^{-1}e^{\h t^{2,3}/2} \Psi^{1,3,2}E_{\mu}^{1,3}
\]

Therefore, the right hand sides of these equations are equal, meaning that $e^{\h \check\mu}$ satisfies:
\[
 \Psi^{-1} e^{\h t^{2,3}/2} \Psi^{1,3,2}= e^{\h \check \mu^{(3)}} \Psi^{-1} e^{\h t^{2,3}/2} \Psi^{1,3,2} e^{-\h \check \mu^{(3)}}
\]
which implies that
\[
 [\check\mu^{(3)}, \Psi_1^{1,3,2}-\Psi_1+1\ot t/2]=0
\]
that is
\[
 [\check\mu^{(3)}, \Psi_1^{1,3,2}-\Psi_1+1\ot t_{\mm}/2]=0
\]
because
\[
 [\check\mu^{(3)}, t_{\mf h}]=0
\]
Write
\[
\Psi_1^{1,3,2}-\Psi_1+1\ot t/2=1\ot\sum_{\alpha \in R} \lambda_{\alpha} e_{\alpha} \ot e_{-\alpha}
\]
and recall that
\[
 t_{\mm}/2=\sum_{\alpha \in R^+} (e_{\alpha} \ot e_{-\alpha}+e_{-\alpha} \ot e_{\alpha})
\]
Hence, as $\Psi_1^{1,3,2}-\Psi_1$ is antisymmetric, $\lambda_{\alpha}+\lambda_{-\alpha}=1$. It means that at least one of $\lambda_{\alpha},\lambda_{-\alpha}$ is non-zero. As
\[
 [\check\mu^{(3)}, \Psi_1^{1,3,2}-\Psi_1+1\ot t_{\mm}/2] =1\ot \sum_{\alpha \in R} -\lambda_{\alpha}\alpha(\check \mu)e_{\alpha} \ot e_{-\alpha}
\]
it means that $\alpha(\check\mu)=0$ for all $\alpha \in R$, and thus that $\check\mu=0$. Therefore, $\mu=0$.
\end{proof}

Let us now check that the dynamical pseudo-twists $\Psi,\Psi'$ associated to the QRAs $B_{\kz},B$ match the assumptions of Theorem~\ref{thm:shift}. The quasi-classical limit of $\Psi_{\kz,\sigma}$ was computed in~\cite{Enriquez2005}, and the quasi-classical limit of $\Psi_{\h}$ can be computed by considering the quasi-classical limit of the modified ABRR equation. Actually, these two results can be deduced from the following more general result:
\begin{lem}\label{lem:s.c.}
Let $\Psi \in 1+\h (U(\mf h)[[\h]]' \hat \ot U(\mf g)^{\ot 2}[[\h]])^{\mf h}$ be a solution of the mixed pentagon equation which satisfies the octagon equation with $E=E_{\kz,\sigma}$. Then there exists $G \in \cg$ such that
\[
\frac{G\star \Psi-1}{\h} \mod \h=\frac12\left(\id \ot \frac{\id+\sigma}{\id-\sigma} \right )((t-\tth)/2) \in \wedge^2 \mf m
\]
\end{lem}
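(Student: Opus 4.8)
The plan is to reduce the computation of the quasi-classical limit of $\Psi$ to a statement about solutions of the classical dynamical Yang--Baxter equation, and then to extract the shift-fixing condition from the octagon equation at order $\h$. First I would let $J \in \widehat{S(\mf h)} \ot U(\mf g)^{\ot 2}$ be the image of $\frac{\Psi-1}{\h}$ under the reduction maps, exactly as in the proof of Theorem~\ref{thm:shift}, and recall from there that $\rho = (\id \ot \alt_2)(J)$ is a solution of the mCDYBE \eqref{eq:DCYBE}. By the Etingof--Varchenko classification \cite{Etingof1998b}, $\rho$ is the Taylor expansion at $0$ of a holomorphic function $r : D \to (\wedge^2 \mf g)^{\mf h}$ whose off-diagonal components are of the form $\phi_\alpha(\lambda) = \coth(2(\alpha, \lambda - \nu_r))$ for some $\nu_r \in \mf h^*$ (assuming, as will be checked, that $\phi_\alpha(0) \neq \pm 1$). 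The point is that $\nu_r$ is exactly the ``residual shift'' of $\Psi$, and a gauge transformation $G \in \cg$ changes $J$ by a coboundary without touching its image $\bar r = \alt_2(\rho_0) = 2\rho_0$ in $(\wedge^2 \mf m)^{\mf h}$ only up to the ambiguity $\im d_2$; so after applying a suitable $G$ we may assume $\bar r$ takes the specific form $\sum_{\alpha \in R} \coth(-2(\alpha,\nu_r))\, e_\alpha \ot e_{-\alpha}$, i.e. $\frac{\Psi-1}{\h} \bmod \h$ has antisymmetrization $\bar r$.

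Next I would bring in the octagon equation with $E = E_{\kz,\sigma}$ to pin down $\nu_r$. Expanding \eqref{eq:octo} to first order in $\h$, with $E_{\kz,\sigma} = e^{\h(t_{\mf h}^{1,2} + \frac12 t_{\mf h}^{2,2})}(1 \ot \sigma)$ and $\R = e^{\h t/2}$, and using $\Psi = 1 + \h \Psi_1 + O(\h^2)$, gives a linear relation between $\Psi_1$, $\Psi_1^{1,3,2}$, $t$, $t_{\mf h}$ and the action of $\sigma$ on the third tensor leg. This is the infinitesimal octagon relation; it is the same type of computation as at the end of the proof of Proposition~\ref{prop:rigid}, where one finds that $\Psi_1^{1,3,2} - \Psi_1 + 1 \ot t/2$ has a prescribed commutation behaviour. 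Projecting to $\wedge^2 \mf m$ via $\alt_2 \circ (\epsilon \ot P^{\ot 2})$ and writing $\bar r = \alt_2(\Psi_1 \bmod \h)$ (up to a coboundary that I have already absorbed into $G$), the infinitesimal octagon relation becomes a \emph{closed algebraic equation on $\bar r$ alone}: schematically, $\sigma$ acts on the root vectors $e_{\pm\alpha}$ by the scalars $\zeta_\alpha^{\pm 1}$, and the relation forces each component $r_\alpha = \phi_\alpha(0)$ of $\bar r$ to satisfy something of the form $r_\alpha = \frac{1}{2}\cdot\frac{\zeta_\alpha + 1}{\zeta_\alpha - 1}$ (equivalently $\coth$ of the appropriate argument). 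Re-assembling over $\alpha \in R^+$ and recognizing $\sum_\alpha \zeta_\alpha^{\pm1}\cdots$ as the operator $\frac{\id + \sigma}{\id - \sigma}$ acting on $(t - t_{\mf h})/2 = t_{\mm}/2$, this yields precisely
\[
\frac{G \star \Psi - 1}{\h} \bmod \h = \frac12\left(\id \ot \frac{\id+\sigma}{\id-\sigma}\right)\big((t - t_{\mf h})/2\big) \in \wedge^2 \mf m,
\]
which is the claim. Note this also confirms a posteriori that $\phi_\alpha(0) \neq \pm 1$, since $\frac{\zeta+1}{\zeta-1} = \pm 1$ would force $\zeta \in \{0,\infty\}$, impossible for a root of unity; and since $\mf g^\sigma = \mf h$ means $\zeta_\alpha \neq 1$ for all $\alpha \in R$, the operator $\frac{\id+\sigma}{\id-\sigma}$ is well-defined on $\mf m$.

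The main obstacle I anticipate is the bookkeeping in the first-order expansion of the octagon equation: one must carefully track how $\R^{3,2}$, $\Psi^{1,3,2}$, $\Psi$ and the two copies of $E$ interact, use the $\mf h$-invariance of $\Psi$ and $\R$ to commute the diagonal pieces $t_{\mf h}$ past everything, and verify that after projection to $\wedge^2 \mf m$ the non-antisymmetric and higher-degree-in-$\mf h$ contributions genuinely drop out — so that one really gets a \emph{clean} equation on $\bar r$ and not just a congruence modulo coboundaries and $\im d_2$. The cohomological input (Theorem~\ref{thm:damien} and Corollary~\ref{cor:cobord}) is what guarantees that the gauge freedom $G \in \cg$ suffices to kill exactly the coboundary ambiguity while fixing $\bar r$; the remaining work is to see that the octagon then has a \emph{unique} solution for $\bar r$, which follows from the injectivity already used in Proposition~\ref{prop:rigid} (the map $a \mapsto [\check\mu^{(3)}, \cdot]$ and the non-vanishing of $\csch^2$), reinterpreted here as the statement that the scalar equation $r_\alpha = \frac12\frac{\zeta_\alpha+1}{\zeta_\alpha-1}$ has a unique solution.
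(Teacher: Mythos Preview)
Your proposal would work, but it takes a considerably longer route than the paper. The paper's proof is entirely elementary and uses neither the Etingof--Varchenko classification nor any uniqueness argument borrowed from Proposition~\ref{prop:rigid}: it simply expands the octagon equation to first order in $\h$. Writing $\rho_0 = (\Psi-1)/\h \bmod \h$, the order-$\h$ part of~\eqref{eq:octo} with $E_{\kz,\sigma}=e^{\h(\tth^{1,2}+\frac12\tth^{2,2})}(1\ot\tilde\sigma)$ and $\R_{\kz}=e^{\h t/2}$ reads, after cancelling the common $\tth^{1,3}+\tfrac12\tth^{3,3}$ and commuting $\tilde\sigma^{(3)}$ past $\mf h$-elements,
\[
\tth^{2,3} \;=\; (\id - \sigma^{(3)})(\rho_0 - \rho_0^{1,3,2}) + (\id + \sigma^{(3)})(t^{2,3}/2),
\]
which one solves \emph{directly} for $\rho_0 - \rho_0^{1,3,2} = \dfrac{\id+\sigma^{(3)}}{\id-\sigma^{(3)}}(t_{\mm}^{2,3}/2)$. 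Applying $(\epsilon \ot \alt_2)$ gives $r_0$; the mixed pentagon equation is invoked only to know that $\rho_0$ is a $d_1$-cocycle, so that Theorem~\ref{thm:damien} furnishes $g \in U(\mf h)_{\leq 1}\ot U(\mf g)$ with $\rho_0 = \tfrac12 r_0 - d_1^{1,2}(g)$, and $G=\exp(\h g)\in\cg$ is the required gauge.

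In other words, what you flag as ``the main obstacle'' (the first-order bookkeeping) \emph{is} the whole proof, and it is a three-line computation. Your detour through the mCDYBE is redundant: you plan to use the pentagon to parametrize the possible $\bar r$'s via the $\coth$-family and then use the octagon to select the parameter $\nu_r$, but the first-order octagon already determines $\bar r$ outright. Dropping the classification step also dissolves the mild circularity you noticed (invoking the $\coth$-form before verifying $\phi_\alpha(0)\neq\pm 1$), and makes the appeal to the injectivity argument of Proposition~\ref{prop:rigid} unnecessary.
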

\begin{proof}
The quasi-classical limit of the octagon equation is
\[
\tilde \sigma^{(3)}(\tth^{1,3}+\tth^{2,3}+\frac12 \tth^{3,3})=(\rho_0^{1,3,2}-\rho_0+t^{2,3}/2)\tilde\sigma^{(3)}+\tth^{1,3}+\frac12\tth^{3,3}+\tilde\sigma^{(3)}(\rho_0-\rho_0^{1,3,2}+t^{2,3}/2)
\]
where 
\[
\rho_0:=\frac{\Psi-1}{\h} \mod \h
\]
Hence,
\begin{align*}
 \tilde \sigma^{(3)}\tth^{2,3}&=(\rho_0^{1,3,2}-\rho_0+t^{2,3}/2)\tilde\sigma^{(3)}+\tilde\sigma^{(3)}(\rho_0-\rho_0^{1,3,2}+t^{2,3}/2)\\
\tth^{2,3}&=(\id - \sigma^{(3)})(\rho_0-\rho_0^{1,3,2})+(\id + \sigma^{(3)})(t^{2,3}/2)
\end{align*}
Finally
\begin{align*}
\rho_0-\rho_0^{1,3,2}&= (\id -\sigma^{(3)})^{-1}((\id + \sigma^{(3)})(t^{2,3}/2)-\tth^{2,3})\\
&= (\id -\sigma^{(3)})^{-1}((\id + \sigma^{(3)})((t^{2,3}-\tth^{2,3})/2)+\tth^{2,3}-\tth^{2,3})\\
&= \frac{\id + \sigma^{(3)}}{\id -\sigma^{(3)}}(t_{\mm}^{2,3}/2)
\end{align*}
as $(\id + \sigma^{(2)})(\tth/2)=\tth$. Thus, 
\[
r_0:=(\epsilon \ot \alt_2)(\rho_0)=\frac{\id + \sigma^{(2)}}{\id -\sigma^{(2)}}(t_{\mm}/2)=\frac{\id + \sigma^{(2)}}{\id -\sigma^{(2)}}(\sum_{\alpha\in R}e_{\alpha}\ot e_{-\alpha}+e_{-\alpha}\ot e_{\alpha})
\]
The fact that $r_0\in (\wedge^2 \mf m)^{\mf h}$ can be checked directly, but follows more generally from the fact that $\rho_0$ is a cocycle in $C_1$, as implied by the mixed pentagon equation. Then according to Theorem~\ref{thm:damien} it means that there exists $g \in U(\mf h)_{\leq 1}\ot U(\mf g)$ such that
\[
\rho_0=\frac12r_0-d_1^{1,2}(g)
\]
Therefore, $G=\exp(\h g) \in \cg$ and
\[
\frac{G\star \Psi-1}{\h} \mod \h=\frac12r_0
\]
as required.
\end{proof}
\begin{proof}[Proof of Theorem~\ref{thm:main}]
Lemma~\ref{lem:s.c.} implies that there exists $G,G' \in \cg$ such that $\tilde \Psi=G\star \Psi$ and $\tilde\Psi'=G'\star \Psi'$ satisfy the assumptions of Theorem~\ref{thm:shift}.  Consequently, there exists $(\tilde G,\mu)\in \cg \rtimes \mf h^*[[\h]]$ such that 
\[
\tilde\Psi=\tilde G \star \tilde \Psi'_{\mu}
\]
Note that $(U(\mf h) \ot U(\mf g)[[\hbar]])^{\mf h}=U(\mf h) \ot (U(\mf g)^{\mf h})[[\hbar]]$, which implies that $\mf h$-invariant twists actually commutes with $E_{\kz,\sigma}$.

It follows that $\tilde\Psi$ and $\tilde G\star\tilde\Psi'$ both satisfy the octagon equation with $E_{\kz,\sigma}$. Then, Proposition~\ref{prop:rigid} implies that $\mu=0$, and hence that they are equal. Finally
\[
\Psi=(G^{-1}\tilde G G)\star \Psi'
\]
implying that the QRAs $B$ and $B_{\kz}$ are twist-equivalent. The Theorem is proved.
\end{proof}



\begin{thebibliography}{ABRR98}

\bibitem[ABRR]{Arnaudon1998}
\textsc{D.~Arnaudon, E.~Buffenoir, E.~Ragoucy, P.~Roche}.
\newblock Universal solutions of quantum dynamical {Y}ang-{B}axter equations.
\newblock \emph{Lett. Math. Phys.}  (\textbf{1998}).
\newblock 44(3):201--214.

\bibitem[Ar]{Artin1925}
\textsc{E.~Artin}.
\newblock Theorie der {Z}\"opfe.
\newblock \emph{Abh. Math. Sem. Univ. Hamburg}  (\textbf{1925}).
\newblock 4:47--72.

\bibitem[Ba]{Babelon1991}
\textsc{O.~Babelon}.
\newblock Universal exchange algebra for {B}loch waves and {L}iouville theory.
\newblock \emph{Comm. Math. Phys.}  (\textbf{1991}).
\newblock 139(3):619--643.

\bibitem[BMR]{Broue1998}
\textsc{M.~Brou{\'e}, G.~Malle, R.~Rouquier}.
\newblock Complex reflection groups, braid groups, {H}ecke algebras.
\newblock \emph{J. Reine Angew. Math.}  (\textbf{1998}).
\newblock 500:127--190.

\bibitem[BR]{Buffenoir1999}
\textsc{E.~Buffenoir, P.~Roche}.
\newblock Harmonic analysis on the quantum {L}orentz group.
\newblock \emph{Comm. Math. Phys.}  (\textbf{1999}).
\newblock 207(3):499--555.

\bibitem[Br]{Brieskorn1971}
\textsc{E.~Brieskorn}.
\newblock Die {F}undamentalgruppe des {R}aumes der regul\"aren {O}rbits einer
  endlichen komplexen {S}piegelungsgruppe.
\newblock \emph{Invent. Math.}  (\textbf{1971}).
\newblock 12:57--61.

\bibitem[Ca]{Calaque2006}
\textsc{D.~Calaque}.
\newblock Quantization of formal classical dynamical {$r$}-matrices: the
  reductive case.
\newblock \emph{Adv. Math.}  (\textbf{2006}).
\newblock 204(1):84--100.

\bibitem[CP]{Chari1994}
\textsc{V.~Chari, A.~Pressley}.
\newblock \emph{A guide to quantum groups} (Cambridge University Press,
  Cambridge, \textbf{1994}).

\bibitem[DCK]{De1990}
\textsc{C.~De~Concini, V.~G. Kac}.
\newblock Representations of quantum groups at roots of {$1$}.
\newblock In \emph{Operator algebras, unitary representations, enveloping
  algebras, and invariant theory ({P}aris, 1989)}, vol.~92 of \emph{Progr.
  Math.}, pp. 471--506 (Birkh\"auser Boston, Boston, MA, \textbf{1990}).

\bibitem[Dr1]{Drinfeld1989b}
\textsc{V.~G. Drinfeld}.
\newblock Almost cocommutative {H}opf algebras.
\newblock \emph{Algebra i Analiz}  (\textbf{1989}).
\newblock 1(2):30--46.

\bibitem[Dr2]{Drinfeld1990a}
\textsc{V.~G. Drinfeld}.
\newblock On quasitriangular quasi-{H}opf algebras and on a group that is
  closely connected with {${\rm Gal}(\overline{\bf Q}/{\bf Q})$}.
\newblock \emph{Leningrad Math. J.}  (\textbf{1990}).
\newblock 2(4):829--860.

\bibitem[Dr3]{Drinfeld1990}
\textsc{V.~G. Drinfeld}.
\newblock Quasi-{H}opf algebras.
\newblock \emph{Leningrad Math. J.}  (\textbf{1990}).
\newblock 1(6):1419--1457.

\bibitem[EE1]{Enriquez2003}
\textsc{B.~Enriquez, P.~Etingof}.
\newblock Quantization of {A}lekseev-{M}einrenken dynamical {$r$}-matrices.
\newblock In \emph{Lie groups and symmetric spaces}, vol. 210 of \emph{Amer.
  Math. Soc. Transl. Ser. 2}, pp. 81--98 (Amer. Math. Soc., Providence, RI,
  \textbf{2003}).

\bibitem[EE2]{Enriquez2005}
\textsc{B.~Enriquez, P.~Etingof}.
\newblock Quantization of classical dynamical {$r$}-matrices with nonabelian
  base.
\newblock \emph{Comm. Math. Phys.}  (\textbf{2005}).
\newblock 254(3):603--650.

\bibitem[EEM]{Enriquez2007a}
\textsc{B.~Enriquez, P.~Etingof, I.~Marshall}.
\newblock Quantization of some {P}oisson-{L}ie dynamical {$r$}-matrices and
  {P}oisson homogeneous spaces.
\newblock In \emph{Quantum groups}, vol. 433 of \emph{Contemp. Math.}, pp.
  135--175 (Amer. Math. Soc., Providence, RI, \textbf{2007}).

\bibitem[En]{Enriquez2008}
\textsc{B.~Enriquez}.
\newblock Quasi-reflection algebras and cyclotomic associators.
\newblock \emph{Selecta Mathematica, New Series}  (\textbf{2008}).
\newblock 13:391--463.
\newblock 10.1007/s00029-007-0048-2.


\bibitem[ES1]{Etingof2001}
\textsc{P.~Etingof, O.~Schiffmann}.
\newblock Lectures on the dynamical {Y}ang-{B}axter equations.
\newblock In \textsc{A.~Pressley}, editor, \emph{Quantum groups and Lie theory
  (Durham, 1999)}, vol. 290 of \emph{London Math. Soc. Lecture Note Ser.}
  (Cambridge Univ. Press, Cambridge, \textbf{2001}) pp. 89--129.
\newblock Papers from the LMS Symposium on Quantum Groups held at the
  University of Durham, Durham, July 19--29, 1999.

\bibitem[ES2]{Etingof2001e}
\textsc{P.~Etingof, O.~Schiffmann}.
\newblock On the moduli space of classical dynamical {$r$}-matrices.
\newblock \emph{Math. Res. Lett.}  (\textbf{2001}).
\newblock 8(1-2):157--170.


\bibitem[ESS]{Etingof2000}
\textsc{P.~Etingof, T.~Schedler, O.~Schiffmann}.
\newblock Explicit quantization of dynamical {$r$}-matrices for finite
  dimensional semisimple {L}ie algebras.
\newblock \emph{J. Amer. Math. Soc.}  (\textbf{2000}).
\newblock 13(3):595--609 (electronic).

\bibitem[EV]{Etingof1998b}
\textsc{P.~Etingof, A.~Varchenko}.
\newblock Geometry and classification of solutions of the classical dynamical
  {Y}ang-{B}axter equation.
\newblock \emph{Comm. Math. Phys.}  (\textbf{1998}).
\newblock 192(1):77--120.

\bibitem[Ga]{Gavarini2002}
\textsc{F.~Gavarini}.
\newblock The quantum duality principle.
\newblock \emph{Ann. Inst. Fourier (Grenoble)}  (\textbf{2002}).
\newblock 52(3):809--834.

\bibitem[Ko]{Kohno1987b}
\textsc{T.~Kohno}.
\newblock Monodromy representations of braid groups and {Y}ang-{B}axter
  equations.
\newblock \emph{Ann. Inst. Fourier (Grenoble)}  (\textbf{1987}).
\newblock 37(4):139--160.

\bibitem[MR]{McConnell2001}
\textsc{J.~C. McConnell, J.~C. Robson}.
\newblock \emph{Noncommutative {N}oetherian rings}, vol.~30 of \emph{Graduate
  Studies in Mathematics} (American Mathematical Society, Providence, RI,
  \textbf{2001}), revised ed.
\newblock With the cooperation of L. W. Small.

\bibitem[ST]{Shephard1954}
\textsc{G.~C. Shephard, J.~A. Todd}.
\newblock Finite unitary reflection groups.
\newblock \emph{Canadian J. Math.}  (\textbf{1954}).
\newblock 6:274--304.

\bibitem[Xu]{Xu2002}
\textsc{P.~Xu}.
\newblock Quantum dynamical {Y}ang-{B}axter equation over a nonabelian base.
\newblock \emph{Comm. Math. Phys.}  (\textbf{2002}).
\newblock 226(3):475--495.

\end{thebibliography}
\end{document}